\theoremstyle{definition}
\newtheorem{definition}{Definition}[section]
\newtheorem{lemma}[definition]{Lemma}
\newtheorem{proposition}[definition]{Proposition}
\newtheorem{theorem}[definition]{Theorem}
\newtheorem{example}[definition]{Example}
\newtheorem{corollary}[definition]{Corollary}
\newtheorem{remark}[definition]{Remark}
\newcommand{\N}{\mathbb{N}}
\newcommand{\Zpos}{\mathbb{Z}_+}
\newcommand{\Z}{\mathbb{Z}}
\newcommand{\Q}{\mathbb{Q}}
\newcommand{\R}{\mathbb{R}}
\newcommand{\digs}{\Sigma} 
\newcommand{\alp}{\digs} 
\newcommand{\Mul}{\Pi} 
\newcommand{\mul}{g} 
\newcommand{\trsh}{\Xi} 
\newcommand{\num}[1]{{\mathcal{N}(#1)}} 
\newcommand{\fin}[1]{{\mathcal{F}(#1)}} 
\newcommand{\cvec}[1]{{\mathbf{#1}}} 
\newcommand{\dedge}{{D}} 
\newcommand{\edge}{{E}} 
\DeclareMathOperator{\config}{config} 
\DeclareMathOperator{\real}{real} 
\DeclareMathOperator{\tr}{Tr} 
\DeclareMathOperator{\base}{base} 
\DeclareMathOperator{\cube}{cube} 
\DeclareMathOperator{\val}{val} 
\DeclareMathOperator{\diag}{diag} 
\DeclareMathOperator{\tess}{tess} 
\DeclareMathOperator{\emb}{{\iota}} 
\DeclareMathOperator{\Emb}{I} 
\DeclareMathOperator{\topi}{\tau} 
\DeclareMathOperator{\boti}{\beta} 
\DeclareMathOperator{\faceat}{faceAt} 
\DeclareMathOperator{\ins}{\vee} 
\DeclareMathOperator{\insover}{\not\vee} 
\DeclareMathOperator{\wgt}{wgt} 
\DeclareMathOperator{\abel}{Ab}
\DeclareMathOperator{\lbl}{\lambda} 
\DeclareMathOperator{\macro}{M} 
\DeclareMathOperator{\micro}{\mu} 
\DeclareMathOperator{\conj}{Conj} 
\DeclareMathOperator{\fact}{Fact} 
\DeclareMathOperator{\sign}{sign} 
\DeclareMathOperator{\block}{B} 
\DeclareMathOperator{\fractional}{frac} 
\DeclareMathOperator{\integ}{int} 
\def\tilescale[#1,#2,#3,#4](#5,#6){\tikzmath{int \n,\w,\s,\e; real \scale; \n=Mod(#1,#2); \w=#1/#2; \s=#1/#3; \e=Mod(#1,#3); \scale=#4;} \draw (#5,#6) -- ++(-\scale,0) node[midway,yshift=3pt,anchor=north]{\footnotesize{\n}} -- ++(0,-\scale) node[midway,xshift=-3pt,anchor=west]{\footnotesize{\w}} -- ++(\scale,0) node[midway,yshift=-3pt,anchor=south]{\footnotesize{\s}}-- ++(0,\scale) node[midway,xshift=3pt,anchor=east]{\footnotesize{\e}} ++(-\scale/2,-\scale/2) node{\large #1}}
\def\tile[#1,#2,#3](#4,#5){\tilescale[#1,#2,#3,1](#4,#5)}
\def\gridscale[#1,#2,#3,#4](#5,#6){\tikzmath{int \n,\w,\s,\e; real \scale; \n=Mod(#1,#2); \w=#1/#2; \s=#1/#3; \e=Mod(#1,#3); \scale=#4;}
\node(ne) at (#5,#6) [shape=circle,draw,minimum size=6mm] {};
\node(nw) at (#5-\scale,#6) [shape=circle,draw,minimum size=6mm] {};
\node(sw) at (#5-\scale,#6-\scale) [shape=circle,draw,minimum size=6mm] {};
\node(se) at (#5,#6-\scale) [shape=circle,draw,minimum size=6mm] {};
\draw[-{triangle 45}] (nw) to [below] node {\footnotesize{\n}} (ne);
\draw[-{triangle 45}] (sw) to [right] node {\footnotesize{\w}} (nw);
\draw[-{triangle 45}] (sw) to [above] node {\footnotesize{\s}} (se);
\draw[-{triangle 45}] (se) to [left] node {\footnotesize{\e}} (ne)
}
\def\gridscalewt[#1,#2,#3,#4,#5](#6,#7){\tikzmath{int \n,\w,\s,\e; int \ne,\nw,\sw,\se; real \scale; \n=Mod(#1,#2); \w=#1/#2; \s=#1/#3; \e=Mod(#1,#3); \ne=#5; \nw=#5*#2; \sw=#5*#2*#3; \se=#5*#3;\scale=#4;}
\node(ne) at (#6,#7) [shape=circle,draw,minimum size=6mm] {\ne};
\node(nw) at (#6-\scale,#7) [shape=circle,draw,minimum size=6mm] {\nw};
\node(sw) at (#6-\scale,#7-\scale) [shape=circle,draw,minimum size=6mm] {\sw};
\node(se) at (#6,#7-\scale) [shape=circle,draw,minimum size=6mm] {\se};
\draw[-{triangle 45}] (nw) to [below] node {\footnotesize{\n}} (ne);
\draw[-{triangle 45}] (sw) to [right] node {\footnotesize{\w}} (nw);
\draw[-{triangle 45}] (sw) to [above] node {\footnotesize{\s}} (se);
\draw[-{triangle 45}] (se) to [left] node {\footnotesize{\e}} (ne)
 }
\def\toptiltscale[#1,#2,#3,#4](#5,#6){\tikzmath{int \n,\w,\s,\e; real \scale; \n=Mod(#1,#2); \w=#1/#2; \s=#1/#3; \e=Mod(#1,#3); \scale=#4;} \draw (#5,#6) -- ++(-3*\scale,0) node[midway,yshift=-4pt]{\footnotesize{\n}} -- ++(-\scale,-\scale) node[midway,xshift=5pt]{\footnotesize{\w}} -- ++(3*\scale,0) node[midway,yshift=4pt]{\footnotesize{\s}}-- ++(\scale,\scale) node[midway,xshift=-5pt]{\footnotesize{\e}} ++(-2*\scale,-\scale/2) node{\large #1}}
\def\righttiltscale[#1,#2,#3,#4](#5,#6){\tikzmath{int \n,\w,\s,\e; real \scale; \n=Mod(#1,#2); \w=#1/#2; \s=#1/#3; \e=Mod(#1,#3); \scale=#4;} \draw (#5,#6) -- ++(-\scale,-\scale) node[midway,yshift=-5pt]{\footnotesize{\n}} -- ++(0,-3*\scale) node[midway,xshift=3pt]{\footnotesize{\w}} -- ++(\scale,\scale) node[midway,yshift=5pt]{\footnotesize{\s}}-- ++(0,3*\scale) node[midway,xshift=-3pt]{\footnotesize{\e}} ++(-\scale/2,-2*\scale) node{\large #1}}
\def\cubefigscale[#1,#2,#3,#4,#5](#6,#7){\tikzmath{int \x,\y,\z; real \scale; \z=Mod(#1,#2*#3); \x=Mod(#1,#3*#4); \y=#1/#3; \scale=#5;} \toptiltscale[\z,#2,#3,\scale](#6,#7); \righttiltscale[\x,#3,#4,\scale](#6,#7); \tilescale[\y,#2,#4,3*\scale](#6-\scale/3,#7-\scale/3)}
\def\cubefig[#1,#2,#3,#4](#5,#6){\cubefigscale[#1,#2,#3,#4,1](#5,#6)}
\def\topgridscale[#1,#2,#3,#4](#5,#6){\tikzmath{int \n,\w,\s,\e; real \scale; \n=Mod(#1,#2); \w=#1/#2; \s=#1/#3; \e=Mod(#1,#3); \scale=#4;}
\node(ne) at (#5,#6) [shape=circle,draw,minimum size=6mm] {};
\node(nw) at (#5-3*\scale,#6) [shape=circle,draw,minimum size=6mm] {};
\node(sw) at (#5-4*\scale,#6-\scale) [shape=circle,draw,minimum size=6mm] {};
\node(se) at (#5-\scale,#6-\scale) [shape=circle,draw,minimum size=6mm] {};
\draw[-{triangle 45}] (nw) to [below] node {\footnotesize{\n}} (ne);
\draw[-{triangle 45}] (sw) to [right] node {\footnotesize{\w}} (nw);
\draw[-{triangle 45}] (sw) to [above] node {\footnotesize{\s}} (se);
\draw[-{triangle 45}] (se) to [left] node {\footnotesize{\e}} (ne)
 }
\def\rightgridscale[#1,#2,#3,#4](#5,#6){\tikzmath{int \n,\w,\s,\e; real \scale; \n=Mod(#1,#2); \w=#1/#2; \s=#1/#3; \e=Mod(#1,#3); \scale=#4;}
\node(ne) at (#5,#6) [shape=circle,draw,minimum size=6mm] {};
\node(nw) at (#5-\scale,#6-\scale) [shape=circle,draw,minimum size=6mm] {};
\node(sw) at (#5-\scale,#6-4*\scale) [shape=circle,draw,minimum size=6mm] {};
\node(se) at (#5,#6-3*\scale) [shape=circle,draw,minimum size=6mm] {};
\draw[-{triangle 45}] (nw) to [below] node {\footnotesize{\n}} (ne);
\draw[-{triangle 45}] (sw) to [right] node {\footnotesize{\w}} (nw);
\draw[-{triangle 45}] (sw) to [above] node {\footnotesize{\s}} (se);
\draw[-{triangle 45}] (se) to [left] node {\footnotesize{\e}} (ne)
 }
\def\cubegridscale[#1,#2,#3,#4,#5](#6,#7){\tikzmath{int \x,\y,\z; real \scale; \z=Mod(#1,#2*#3); \x=Mod(#1,#3*#4); \y=#1/#3; \scale=#5;} \topgridscale[\z,#2,#3,\scale](#6,#7); \rightgridscale[\x,#3,#4,\scale](#6,#7); \gridscale[\y,#2,#4,3*\scale](#6-\scale/3,#7-\scale/3)}
\def\cubegrid[#1,#2,#3,#4](#5,#6){\cubegridscale[#1,#2,#3,#4,1](#5,#6)}
\def\topgridscalewt[#1,#2,#3,#4,#5](#6,#7){\tikzmath{int \n,\w,\s,\e; int \ne,\nw,\sw,\se; real \scale; \n=Mod(#1,#2); \w=#1/#2; \s=#1/#3; \e=Mod(#1,#3); \ne=#5; \nw=#5*#2; \sw=#5*#2*#3; \se=#5*#3;\scale=#4;}
\node(ne) at (#6,#7) [shape=circle,draw,minimum size=6mm] {\ne};
\node(nw) at (#6-3*\scale,#7) [shape=circle,draw,minimum size=6mm] {\nw};
\node(sw) at (#6-4*\scale,#7-\scale) [shape=circle,draw,minimum size=6mm] {\sw};
\node(se) at (#6-\scale,#7-\scale) [shape=circle,draw,minimum size=6mm] {\se};
\draw[-{triangle 45}] (nw) to [below] node {\footnotesize{\n}} (ne);
\draw[-{triangle 45}] (sw) to [right] node {\footnotesize{\w}} (nw);
\draw[-{triangle 45}] (sw) to [above] node {\footnotesize{\s}} (se);
\draw[-{triangle 45}] (se) to [left] node {\footnotesize{\e}} (ne)
 }
\def\rightgridscalewt[#1,#2,#3,#4,#5](#6,#7){\tikzmath{int \n,\w,\s,\e; int \ne,\nw,\sw,\se; real \scale; \n=Mod(#1,#2); \w=#1/#2; \s=#1/#3; \e=Mod(#1,#3); \ne=#5; \nw=#5*#2; \sw=#5*#2*#3; \se=#5*#3;\scale=#4;}
\node(ne) at (#6,#7) [shape=circle,draw,minimum size=6mm] {\ne};
\node(nw) at (#6-\scale,#7-\scale) [shape=circle,draw,minimum size=6mm] {\nw};
\node(sw) at (#6-\scale,#7-4*\scale) [shape=circle,draw,minimum size=6mm] {\sw};
\node(se) at (#6,#7-3*\scale) [shape=circle,draw,minimum size=6mm] {\se};
\draw[-{triangle 45}] (nw) to [below] node {\footnotesize{\n}} (ne);
\draw[-{triangle 45}] (sw) to [right] node {\footnotesize{\w}} (nw);
\draw[-{triangle 45}] (sw) to [above] node {\footnotesize{\s}} (se);
\draw[-{triangle 45}] (se) to [left] node {\footnotesize{\e}} (ne)
 }
\def\cubegridscalewt[#1,#2,#3,#4,#5,#6](#7,#8){\tikzmath{int \x,\y,\z; int \newwgt real \scale; \z=Mod(#1,#2*#3); \x=Mod(#1,#3*#4); \y=#1/#3; \scale=#5; \newwgt=#6*#3;} 
\topgridscalewt[\z,#2,#3,\scale,#6](#7,#8); 
\rightgridscalewt[\x,#3,#4,\scale,#6](#7,#8);
\gridscalewt[\y,#2,#4,3*\scale,\newwgt](#7-\scale/3,#8-\scale/3)}
\def\cubegridwt[#1,#2,#3,#4,#5](#6,#7){\cubegridscalewt[#1,#2,#3,#4,1,#5](#6,#7)}
\begin{document}

\title{Multiplication cubes and multiplication automata}

\author{Johan Kopra}

\affil{Department of Mathematics and Statistics, \\FI-20014 University of Turku, Finland}
\affil{jtjkop@utu.fi}

\date{}

\maketitle

\setcounter{page}{1}

\begin{abstract}
\noindent We extend previously known two-dimensional multiplication tiling systems that simulate multiplication by two natural numbers $p$ and $q$ in base $pq$ to higher dimensional multiplication tessellation systems. We develop the theory of these systems and link different multiplication tessellation systems with each other via macrotile operations that glue cubes in one tessellation system into larger cubes of another tessellation system. The macrotile operations yield topological conjugacies and factor maps between cellular automata performing multiplication by positive numbers in various bases.
\end{abstract}

\providecommand{\keywords}[1]{\textbf{Keywords:} #1}
\noindent\keywords{symbolic dynamics, Wang tiles, multiplication cubes, cellular automata, multiplication automata}

\section{Introduction}

Cellular automata (CA) are symbolic dynamical systems that act on either $\alp^\N$ (one-sided CA) or $\alp^\Z$ (two-sided CA) for some finite symbol set $\alp$ and that are defined by using a local rule. The CA appearing in this paper are multiplication automata $\Mul_{\alpha,N}$ that simulate multiplication by some $\alpha>0$ on base-$N$ representations of numbers. These are not defined for all pairs of $\alpha$ and $N$. In~\cite{BHM96}, possibly the first paper featuring such CA, it is shown that in the case of one-sided multiplication automata (meaning that multiplication is performed only on the fractional part of a number) $\Mul_{\alpha,N}$ is defined precisely when $\alpha$ is an integer all of whose prime factors divide $N$. The paper~\cite{BM97} shows for $\alpha$ dividing $N$ that $\Mul_{\alpha,N}$ is topologically conjugate to the left shift map on $\digs_\alpha^\N$ with  $\digs_\alpha=\{0,1,\dots,\alpha-1\}$ if and only $\alpha$ and $N$ are divisible by the same prime numbers. This left shift map is in fact equal to $\Mul_{\alpha,\alpha}$, which is the simplest way to represent the operation of multiplying by $\alpha$.  Considering two-sided multiplication automata makes it possible to also talk about automata multiplying by proper fractions, as was done in~\cite{Kari12a} in the special case $\alpha=3/2$, $N=6$. From~\cite{BHM96} one can infer that in the two-sided case $\Mul_{\alpha,N}$ is defined precisely when $\alpha$ is a rational number whose numerator and denominator are products of factors of $N$.

In this paper we consider explicitly only the case of two-sided multiplication automata and look further into topological conjugacies of multiplication automata with other multiplication automata that are not necessarily shift maps as was the case in~\cite{BM97}. We show in Corollary~\ref{mulConj} that multiplication automata $\Mul_{\alpha,N_1}$ and $\Mul_{\alpha,N_2}$ are conjugate when $N_1$ and $N_2$ are divisible by the same prime numbers. Even when this condition is not satisfied, we can consider topological factors. We show in Corollary~\ref{mulFact} that $\Mul_{\alpha,N_1}$ has $\Mul_{\alpha,N_2}$ as a factor if every prime factor of $N_2$ is a prime factor of $N_1$. The conjugacies and factor maps we present for connecting these multiplication automata essentially transform base-$N_1$ representations of any nonnegative real number to base-$N_2$ representations of the same real number.

To prove the results mentioned in the previous paragraph, this paper takes the approach of considering so-called multiplication cubes and multiplication tessellations, which are interesting mathematical objects in their own right. In~\cite{Rud90} multiplication by $p$ and $q$ is essentially implemented by tilings of the plane with the tiles being base-$pq$ digits and multiplication by $p$ and $q$ corresponding to shifting the tiling either horizontally or vertically. The tiling terminology appears explicitly in~\cite{CSW21} in the special case $p=2$, $q=3$. Instead of restricting to two-dimensional tilings, we will introduce multiplication tessellation systems using multiplication (hyper)cubes of arbitrary dimension.

Our construction of multiplication cubes is based on mixed base representations of natural numbers, the basics of which we cover in Section~\ref{mixedBaseSect}. Section~\ref{WangSect} is all about multiplication cubes, and mixed base representations of numbers are used to define multiplication cubes and their tessellations in Subsection~\ref{cubePre}. Briefly put, associated to any vector of positive integers $(n_1,n_2,\dots,n_d)$ there will be $N=n_1n_2\cdots n_d$ different $d$-dimensional hypercubes correspoding to the base-$N$ digits and having various labels on their lower dimensional hyperfaces, and in valid $d$-dimensional tessellations neighboring cubes have to have matching labels on adjacent $(d-1)$-dimensional hyperfaces. The underlying idea is that then one can associate any real number $\xi\geq0$ with a valid tessellation containing a base-$N$ representation of $\xi$ along the main diagonal (Corollary~\ref{tessExist}), and moving along the direction of the $i$th coordinate axis in a tessellation corresponds to multiplication by $n_i$ (Proposition~\ref{infShiftMul}).

We proceed to investigate various structures appearing in multiplication tilings in later subsections of Section~\ref{WangSect}. In Theorem~\ref{subfaceNeigh} of Subsection~\ref{lowerMatchSubSect} it turns out that the matching condition of $(d-1)$-dimensional hyperfaces in valid tessellations automatically implies that also the labels of lower dimensional hyperfaces in adjacent cubes match. In Subsection~\ref{pathSubSect} we define the notion of a path integral over a path in a tessellation. This terminology is justified by Theorem~\ref{indPath}, according to which the path integral over a cycle in a valid tessellation is always equal to zero. This is then used as a technical tool to define labels between directed line segments connecting any two points in a tessellation. These labels are used in Subsection~\ref{macMicSubSect}, which introduces the macrotile operation. The macrotile operation groups multiplication cubes into larger cubes that can also be viewed as multiplication cubes. More generally, with some restrictions it is possible to draw a grid of parallelepipeds within a tessellation and interpret each individual parallelelepiped as a new multiplication cube. The way this works is that the labels of the edges of the parallelepipeds in the original tessellation yield the labels of the edges of the new multiplication cubes. The grid of parallelepipeds may be of a lower dimension than the original tessellation, which in particular means that a lower dimensional cut along the $i_1$th, $i_2$th,$\dots$,$i_k$th coordinate axes in a $d$-dimensional tessellation is a $k$-dimensional tessellation over the multiplication cube set associated to the vector $(n_{i_1},n_{i_2},\dots,n_{i_k})$. By Theorem~\ref{macromicroInv} the macrotile operation has in some cases an inverse map, the microtile operation. Even when the macrotile operation is not invertible, by Theorem~\ref{macroSurj} it is always a surjection between sets of valid tessellations.

In Section~\ref{CASect} we turn to multiplication automata, which are defined in Subsection~\ref{CAPre}. In Subsection~\ref{WangCASubSect} multiplication automata are connected to multiplication tessellations: by Theorem~\ref{tessdiag} multiplication automata are topologically conjugate to shift maps on sets of valid tessellations by multiplication cubes. Due to this connection, conjugacy and factor relations between various shift dynamics on multiplication tessellations imply the conjugacy and factor relations between multiplication automata mentioned earlier in this introduction. As a minor application we use these conjugacy and factor relations in Subsection~\ref{regSubSect}, together with some earlier results, to completely classify multiplication automata according to their regularity status in the sense of~\cite{Kur97}.

\section{Preliminaries}

We denote the set of positive integers by $\Zpos$ and define the set of natural numbers by $\N=\Zpos\cup\{0\}$. For $n\in\Zpos$ we denote $\digs_n=\{0,1,\dots,n-1\}$. Whenever $A$ and $B$ are sets, $A^B$ denotes the collection of functions from $B$ to $A$. We often denote the value of a function $f\in A^B$ at $b\in B$ by $f[b]$ instead of $f(b)$. When $B$ is countable and $A$ is finite, the set $A^B$ is a compact metrizable space with respect to the prodiscrete topology, and every closed subset of $A^B$ is also compact and metrizable with respect to the subspace topology.

We interpret the notation $A^n$ for a set $A$ and $n\in\N$ as a shorthand for $A^{\{1,2,\dots,n\}}$, the set of sequences of length $n$ over $A$ indexed by $1,2,\dots,n$. The elements of this set can be represented by $(a_1,\dots,a_n)$, by $(a_i)_{i=1}^n$, or for short just by $(a_i)$ when the index $i$ and the length of the sequence $n$ are clear from the context. As in the previous paragraph, given $a=(a_1,\dots,a_n)\in A^n$ we may denote $a[i]=a_i$ for $1\leq i\leq n$. We also denote $a[i,j]=(a_i,a_{i+1}\dots,a_j)$ for $i,j\in\{1,2,\dots,n\}$: this is the empty sequence when $i>j$. 

By substituting $\R$ for $A$ in the previous paragraph we get the set of $n$-dimensional real vectors. We define a partial order for $v,w\in\R^n$ by $v\leq w$ if $v[i]\leq w[i]$ for $1\leq i\leq n$. A stronger inequality is $v\ll w$, which means that $v[i]<w[i]$ for all $1\leq i\leq n$. For any $x\in\R$ we define the constant vector $\cvec{x}=(x,\dots,x)$, so in particular $\cvec{0}=(0,\dots,0)$ and $\cvec{1}=(1,\dots,1)$. The Kronecker delta is defined by $\delta_{ij}=0$ when $i\neq j$ and $\delta_{ij}=1$ when $i=j$. Using this, the standard basis vectors $e_k\in\R^n$ for $1\leq k\leq n$ are defined by $e_k[i]=\delta_{ki}$ for $1\leq i\leq n$. In matrix multiplications we interpret vectors as column vectors.

To present some of the main results of this paper we will use the terminology of topological dynamical systems, one reference for these is~\cite{Kur03}.

\begin{definition}If $X$ is a compact metrizable topological space and $T:X\to X$ is a continuous map, we say that $(X,T)$ is a \emph{(topological) dynamical system}.\end{definition}

As a particular example, for a finite set $\alp$ (an alphabet), $d\in\Zpos$ and $z\in\Z^d$ we define the shift maps $\sigma_{z}:\alp^{\Z^d}\to \alp^{\Z^d}$ by $\sigma_{z}(f)[z']=f[z'+z]$ for all $f\in \alp^{\Z^d}$ and $z'\in\Z^d$. Then $(\alp^{\Z^d},\sigma_z)$ is a dynamical system. Whenever $X\subseteq \alp^{\Z^d}$ is closed and $\sigma_z(X)=X$, then $(X,\sigma_z)$ is also a dynamical system. Note that we used the same notation for both $\sigma_z$ and its restriction to $X$: in practice this will not cause confusion. Let us also mention that if $\sigma_z(X)=X$ for all $z\in\Z^d$, then $X$ in fact becomes a system with multidimensional dynamics called a $d$-dimensional subshift. In the one-dimensional case we define $\sigma:\alp^\Z\to\alp^\Z$ by $\sigma=\sigma_1$ and for a closed subset $X\subseteq\alp^\Z$ satisfying $\sigma(X)=X$ we call $(X,\sigma)$ a (one-dimensional) subshift. One can also define $\sigma:\alp^\N\to\alp^\N$ by $\sigma(x)[i]=x[i+1]$ for $i\in\N$, and then $(X,\sigma)$ for a closed $X\subseteq\alp^\N$ satisfying $\sigma(X)\subseteq \alp^\N$ is called a one-sided subshift.

The structure preserving transformations between topological dynamical systems are known as morphisms.

\begin{definition}We write $\psi:(X,T)\to (Y,S)$ whenever $(X,T)$ and $(Y,S)$ are dynamical systems and $\psi:X\to Y$ is a continuous map such that $\psi\circ T=S\circ\psi$ (this equality is known as the \emph{equivariance condition}). Then we say that $\psi$ is a \emph{morphism}. If $\psi$ is surjective, we say that $\psi$ is a \emph{factor map} and that $(Y,S)$ is a \emph{factor} of $(X,T)$ (via $\psi$). If $\psi$ is bijective, we say that $\psi$ is a \emph{conjugacy} and that $(X,T)$ and $(Y,S)$ are \emph{conjugate} (via $\psi$).\end{definition}

The particular dynamical systems under consideration in this paper will be shift maps on tilings and cellular automata.

\section{Mixed base representations of numbers}\label{mixedBaseSect}

A vector $m\in\Zpos^k$ such that $m[i]$ divides $m[i+1]$ for $1\leq i<k$ is called a \emph{mixed base}. A number $a\in\N$ has a mixed base-$m$ representation $a=\sum_{i=0}^k a_i m[i]$, (with the convention $m[0]=1$) where $a_i$ are the unique integers satisfying $0\leq a_i<m[i+1]/m[i]$ for $0\leq i<k$ and $a_k\geq 0$. We define $\base(a,n)=(a_0,a_1,\dots, a_k)$, so in particular $\base(a,n)[i]=a_{i-1}$ for $1\leq i\leq k+1$. For example, the base-10 representation (in the usual sense) for $a\in\N$ arises by choosing $m=(10,10^2,\dots,10^k)$ for a sufficiently large $k$.

We may also use a single vector $n\in\Zpos^d$, a \emph{prebasis}, to form many different mixed bases as various products of $n[i]$. For $v\in\Z^d$ we let
\[m(n,v)=\prod_{j=1}^d n[j]^{-v[j]}\]
(this is a positive integer in the special case $v\leq\cvec{0}$). For a \emph{directive sequence}, a decreasing sequence of vectors $(v_i)_{i=1}^k$ satisfying $\cvec{0}\geq v_1\geq v_2\geq\cdots\geq v_k\in\Z^d$ (a \emph{binary} directive sequence in the case $v_i\in\{-1,0\}^d$), we define a mixed base $m(n,(v_{i}))\in\Zpos^k$ by
\[m(n,(v_{i}))[j]=m(n,v_j)\mbox{ for }1\leq j\leq k.\]
We also define $\base_{n}(a,(v_1,\cdots,v_k))=\base(a,m(n,(v_i)))$ for $a\in\N$.

\begin{remark}
Our definition of $m(n,v)$ where a negative coordinate $v[j]$ corresponds to raising $n[j]$ to a positive power may seem strange. Throughout this paper negative numbers will appear at points where positive numbers could be expected. Our conventions are ultimately motivated by the peculiarity that the usual method of writing down the digits of a number causes the more significant digits to appear to the left (i.e. towards the negative direction of the $x$-axis) instead of to the right.
\end{remark}

We also mention some telescoping properties for the base representations. For $x\in A^n$ and $y\in A^m$ with any set $A$ define the insertion $x\ins_i y\in A^{n+m}$ for $0\leq i\leq n$ by 
\begin{flalign*}
&(x\ins_i y)[1,i]=x[1,i] \\
&(x\ins_i y)[i+1,i+m]=y \\
&(x\ins_i y)[i+m+1,n+m]=x[i+1,n]
\end{flalign*}
and the overwrite $x\insover_i y\in A^{n+m-1}$ for $1\leq i\leq n$ by
\begin{flalign*}
&(x\insover_i y)[1,i-1]=x[1,i-1] \\
&(x\insover_i y)[i,i+m-1]=y \\
&(x\insover_i y)[i+m,n+m-1]=x[i+1,n].
\end{flalign*}

\begin{lemma}\label{scope}
Let $m\in\Zpos^k$, $m'\in\Zpos^{k'}$ and $m\ins_i m'\in\Zpos^{k+k'}$ be mixed bases with $0\leq i\leq k$. If $a\in\N$ and $\base(a,m)=(a_0,\dots,a_{k})$, then (with the convention that $m[0]=1$)
\[\base(a,m\ins_i m')=\base(a,m)\insover_{i+1}\base(a_i,m'/m[i]).\]
\end{lemma}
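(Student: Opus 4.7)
Proof plan. By the uniqueness clause in the definition of a mixed base representation, it suffices to show that $b := \base(a,m)\insover_{i+1}\base(a_i, m'/m[i])$ satisfies the two defining conditions of $\base(a, m\ins_i m')$: that $a = \sum_{j=1}^{k+k'+1} b[j]\cdot(m\ins_i m')[j-1]$ (using the convention $(m\ins_i m')[0]=1$) and that each $b[j]$ lies in the prescribed range. Write $\base(a,m)=(a_0,\dots,a_k)$ and $\base(a_i, m'/m[i])=(c_0,\dots,c_{k'})$. Unwinding the definitions of $\ins_i$ and $\insover_{i+1}$, the three blocks of $b$ read $b[j]=a_{j-1}$ for $1\leq j\leq i$, $b[j]=c_{j-i-1}$ for $i+1\leq j\leq i+k'+1$, and $b[j]=a_{j-k'-1}$ for $i+k'+2\leq j\leq k+k'+1$; the corresponding base values $(m\ins_i m')[j-1]$ follow the matching pattern $m[j-1]$, then either $m[i]$ or $m'[j-i-1]$, then $m[j-k'-1]$.

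First I would verify the sum identity by treating the three blocks separately. The first block contributes $\sum_{\ell=0}^{i-1} a_\ell m[\ell]$ and the third block contributes $\sum_{\ell=i+1}^{k} a_\ell m[\ell]$, both by direct re-indexing. For the middle block, the contribution is $c_0 m[i] + \sum_{\ell=1}^{k'} c_\ell m'[\ell]$; since $m\ins_i m'$ is by hypothesis a mixed base, $m[i]$ divides every $m'[\ell]$, so $m'[\ell] = m[i]\cdot(m'/m[i])[\ell]$ for $\ell\geq 1$, and the middle contribution collapses to $m[i]\cdot\sum_{\ell=0}^{k'} c_\ell (m'/m[i])[\ell] = m[i]\cdot a_i$. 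Adding the three contributions gives $\sum_{\ell=0}^{k} a_\ell m[\ell]=a$.

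Next I would verify the range constraints $0\leq b[j]<(m\ins_i m')[j]/(m\ins_i m')[j-1]$ for $1\leq j\leq k+k'$, together with $b[k+k'+1]\geq 0$. For indices strictly inside each of the three blocks, the ratio reduces either to $m[\ell+1]/m[\ell]$ or to $m'[\ell+1]/m'[\ell]=(m'/m[i])[\ell+1]/(m'/m[i])[\ell]$, and the required inequality is exactly the defining constraint on $a_\ell$ or on $c_\ell$; the constraint at $j=i+1$ (namely $c_0<m'[1]/m[i]$) is the defining constraint on $c_0$.

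The only step that is not routine is the boundary index $j=i+k'+1$ when $i<k$, where $b[j]=c_{k'}$ but the ratio $(m\ins_i m')[j]/(m\ins_i m')[j-1] = m[i+1]/m'[k']$ mixes the two sub-bases. Here I would argue as follows: the defining sum for $c$ and nonnegativity of the other $c_\ell$ give $a_i\geq c_{k'}\cdot(m'/m[i])[k'] = c_{k'}\cdot m'[k']/m[i]$, hence $c_{k'}\leq a_i\cdot m[i]/m'[k']$; combining with $a_i<m[i+1]/m[i]$ yields $c_{k'}<m[i+1]/m'[k']$, where the right-hand side is a positive integer thanks to the divisibility $m'[k']\mid m[i+1]$ built into $m\ins_i m'$ being a mixed base. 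In the remaining case $i=k$ there is no boundary constraint, since $b[k+k'+1]=c_{k'}$ is the top digit and only nonnegativity is required. Uniqueness of the mixed base representation then delivers the claimed equality.
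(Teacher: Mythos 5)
Your proposal is correct and follows essentially the same route as the paper's proof: substitute the base-$m'/m[i]$ expansion of $a_i$ into the base-$m$ expansion of $a$, check the digit ranges block by block, and handle the single nontrivial boundary constraint via $c_{k'}\leq a_i\,m[i]/m'[k']<m[i+1]/m'[k']$, exactly as the paper does. The only cosmetic difference is that you invoke uniqueness of mixed base representations explicitly and spell out the boundary estimate in slightly more detail, which the paper leaves implicit.
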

\begin{proof}
Let $\base(a_i,m'/m[i])=(b_0,\dots,b_{k'})$. By assumption $a=\sum_{j=0}^k a_j m[j]$ ($0\leq a_j<m[j+1]/m[j]$ for $0\leq j<k$, $a_k\geq 0$) and $a_i=b_0+\sum_{j=1}^{k'} b_j(m'[j]/m[i])$ ($0\leq b_0<m'[1]/m[i]$, $0\leq b_j<m'[j+1]/m'[j]$ for $1\leq j<k'$, $b_k\geq 0$). By substituting the base-$m'$ representation of $a_i$ into the base-$m$ representation of $a$ we find that
\[a=\sum_{j=0}^{i-1}a_j m[j]+b_0m[i]+\sum_{j=1}^{k'}b_j m'[j]+\sum_{j=i+1}^k a_j m[j].\]
We need to check that the coefficients of $m[j]$ and $m'[j]$ are such that 
\[\base(a,m\ins_i m')=(a_0,\dots a_{i-1}, b_0,\dots,b_{k'},a_{i+1},\dots,a_k):\]
everything except the inequality $b_{k'}<m[i+1]/m'[k']$ (when $i<k$) follows directly from the definition of $a_j$ and $b_j$. Because $a_i<m[i+1]/m[i]$, it follows that $b_{k'}\leq a_i m[i]/m'[k']<m[i+1]/m'[k']$, and we are done. 
\end{proof}

\begin{lemma}\label{prescope}
Let $n$ be a prebasis and let $(v_j)_{j=1}^{k}$, $(w_j)_{j=1}^{k'}$ and $(v_j)_{j=1}^{k}\ins_i (w_j)_{j=1}^{k'}$ be directive sequences with $0\leq i\leq k$. If $a\in\N$ and $\base_{n}(a,(v_j)_{j=1}^k)=(a_0,\dots,a_k)$, then (with the convention that $v_0=\cvec{0}$)
\[\base_{n}(a,(v_j)_{j=1}^k\ins_i(w_j)_{j=1}^{k'})=\base_{n}(a,(v_j)_{j=1}^k)\insover_{i+1}\base_{n}(a_i,(w_j-v_i)_{j=1}^{k'}).\]
\end{lemma}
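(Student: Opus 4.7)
The plan is to reduce the statement directly to Lemma~\ref{scope} by translating the directive sequences into their corresponding mixed bases via $m(n,\cdot)$. The key observation is that by definition $\base_n(a,(v_j)) = \base(a, m(n,(v_j)))$, so the entire identity is really a statement about ordinary mixed bases in disguise, and both sides of the proposed equation can be rewritten so that the outer operation is $\base$ rather than $\base_n$.

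First I would set $m := m(n,(v_j)_{j=1}^k) \in \Zpos^k$ and $m' := (m(n,w_j))_{j=1}^{k'} \in \Zpos^{k'}$, and I would verify two componentwise identities: (i) $m \ins_i m' = m(n, (v_j) \ins_i (w_j))$, which is immediate upon unpacking the entrywise definition of $\ins_i$; and (ii) $m'/m[i] = m(n, (w_j - v_i)_{j=1}^{k'})$, which reduces to the single-vector identity $m(n,w)/m(n,v) = m(n, w-v)$, an immediate consequence of $m(n,v) = \prod_j n[j]^{-v[j]}$.

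Before invoking the earlier lemma I would confirm that $m \ins_i m'$ and $(m(n, w_j - v_i))_{j=1}^{k'}$ are genuine mixed bases, so that Lemma~\ref{scope} and the right-hand side of the conclusion are well defined. Both facts follow from the general principle that $m(n,\cdot)$ sends $\geq$ to divisibility: if $v \geq w$ then $m(n,w)/m(n,v) = m(n, w-v)$ is a positive integer. Hence the hypothesis that $(v_j) \ins_i (w_j)$ is a directive sequence translates into $m \ins_i m'$ being a mixed base, and a similar remark applies to the shifted sequence $(w_j - v_i)_j$ once one recalls that $(w_j)$ itself is decreasing.

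With these translations in hand, Lemma~\ref{scope} applied to $m$, $m'$, and $i$ produces
\[\base(a, m \ins_i m') = \base(a, m) \insover_{i+1} \base(a_i, m'/m[i]),\]
and replacing each $\base(\cdots)$ by the corresponding $\base_n(\cdots)$ using (i) and (ii) yields precisely the claimed identity. I do not foresee any genuine obstacle here: the argument is essentially a bookkeeping translation between the prebasis language and the mixed-base language, with the substantive content already carried out in Lemma~\ref{scope}.
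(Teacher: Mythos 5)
Your proposal is correct and follows essentially the same route as the paper: translate the directive sequences into mixed bases via $m(n,\cdot)$, note that $m(n,w_j-v_i)=m(n,w_j)/m(n,v_i)$, and invoke Lemma~\ref{scope}. Your explicit check that the translated objects are genuine mixed bases is a small extra verification the paper leaves implicit, but otherwise the arguments coincide.
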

\begin{proof}
Let $M=m(n,v_i)$. We note that for $1\leq j'\leq k'$,
\begin{flalign*}
&m(n,w_{j'}-v_i)=\prod_{j=1}^d n[j]^{(-w_{j'}+v_i)[j]} \\
&=\prod_{j=1}^d n[j]^{-w_{j'}[j]}\prod_{j=1}^d n[j]^{v_i[j]}=m(n,w_{j'})/M.
\end{flalign*}
By assumption $\base(a,m(n,(v_j)_{j=1}^{k}))=\base_{n}(a,(v_j)_{j=1}^k)=(a_0,\dots,a_k)$ and
\begin{flalign*}
&\base_{n}(a,(v_j)_{j=1}^k\ins_i(w_j)_{j=1}^{k'})=\base(a,m(n,(v_j)\ins_i(w_j))) \\
&=\base(a,m(n,(v_j)_{j=1}^{k})\ins_i m(n,(w_j)_{j=1}^{k'})) \\
\overset{L.~\ref{scope}}&{=}\base(a,m(n,(v_j)_{j=1}^{k}))\insover_{i+1}\base(a_i,m(n,(w_j)_{j=1}^{k'})/M)) \\
&=\base_{n}(a,(v_j)_{j=1}^k)\insover_{i+1}\base_{n}(a_i,(w_j-v_i))_{j=1}^{k'}).
\end{flalign*}
\end{proof}

\begin{lemma}\label{endscope}
Let $n$ be a prebasis, let $(v_j)_{j=1}^k$ and $(w_j)_{j=1}^{k'}$ be directive sequences and let $0\leq i<k$, $0\leq i'<k'$ be such that $v_i=w_{i'}$ and $v_{i+1}=w_{i'+1}$ (with the convention $v_0=w_0=\cvec{0}$). For any $a\in\N$ it holds that
\[\base_{n}(a,(v_j))[i+1]=\base_{n}(a,(w_j))[i'+1].\]
If $v_k=w_{k'}$, then
\[\base_{n}(a,(v_j))[k+1]=\base_{n}(a,(w_j))[k'+1].\]
\end{lemma}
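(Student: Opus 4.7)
The plan is to observe that in any mixed base representation $\base(a,m) = (a_0,\dots,a_k)$, each individual digit can be extracted from $a$ by a formula that depends only on the two adjacent base values $m[i]$ and $m[i+1]$ (and on $m[k]$ alone, for the top digit). Once this is established, the lemma follows by specializing to $m = m(n,(v_j))$ and $m = m(n,(w_j))$ and invoking the hypotheses $v_i=w_{i'}$, $v_{i+1}=w_{i'+1}$ (or $v_k = w_{k'}$).

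First I would prove that for any mixed base $m \in \Zpos^k$, with the convention $m[0]=1$, and any $a \in \N$,
\[\base(a,m)[i+1] \;=\; \lfloor a/m[i]\rfloor \bmod (m[i+1]/m[i]) \qquad (0\le i<k),\]
and $\base(a,m)[k+1] = \lfloor a/m[k]\rfloor$. This is a short telescoping computation: the defining inequalities $0 \le a_j < m[j+1]/m[j]$ yield $\sum_{j=0}^{i-1} a_j m[j] \le \sum_{j=0}^{i-1}(m[j+1]-m[j]) = m[i]-1$, so dividing $a = \sum_{j=0}^k a_j m[j]$ by $m[i]$ gives $\lfloor a/m[i]\rfloor = \sum_{j=i}^k a_j\,(m[j]/m[i])$. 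Reducing modulo $m[i+1]/m[i]$ eliminates every term with $j>i$ by the divisibility property of a mixed base, and leaves precisely $a_i = \base(a,m)[i+1]$; for the top digit no reduction is needed.

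Specializing this to $m = m(n,(v_j))$ gives $m[j] = m(n,v_j)$ for $1 \le j \le k$, and the convention $v_0 = \cvec{0}$ together with $m(n,\cvec{0}) = 1$ matches $m[0] = 1$. Consequently $\base_n(a,(v_j))[i+1]$ depends only on $m(n,v_i)$ and $m(n,v_{i+1})$, while $\base_n(a,(w_j))[i'+1]$ depends only on $m(n,w_{i'})$ and $m(n,w_{i'+1})$. The hypotheses $v_i = w_{i'}$ and $v_{i+1}=w_{i'+1}$ force these two pairs of integers to coincide, giving the first equality. The second claim, assuming $v_k=w_{k'}$, follows in the same way from $\base(a,m)[k+1] = \lfloor a/m[k]\rfloor$ using $m(n,v_k) = m(n,w_{k'})$.

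There is essentially no serious obstacle here; the only points requiring care are the boundary case $i=0$ (or $i'=0$), where one must verify that the convention $v_0 = \cvec{0}$ and $m(n,\cvec{0}) = 1$ makes the formula behave correctly, and the small bookkeeping of the index shift between positions in a length-$k$ directive sequence and positions in its length-$(k+1)$ base representation. An alternative route via Lemma~\ref{prescope} would try to reduce both directive sequences to a common ``core'' $(v_i, v_{i+1})$ by repeated insertions, but this would be notationally heavier than the direct digit-extraction formula and I would not pursue it.
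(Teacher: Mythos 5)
Your proof is correct, but it takes a different route from the paper. You derive a direct digit-extraction formula, $\base(a,m)[i+1]=\lfloor a/m[i]\rfloor \bmod \bigl(m[i+1]/m[i]\bigr)$ and $\base(a,m)[k+1]=\lfloor a/m[k]\rfloor$, from the defining inequalities of a mixed base representation (the telescoping bound $\sum_{j<i}a_jm[j]\le m[i]-1$ plus the divisibility chain), and then observe that under the specialization $m=m(n,(v_j))$ each digit depends only on the pair $m(n,v_i)$, $m(n,v_{i+1})$, so the hypotheses $v_i=w_{i'}$, $v_{i+1}=w_{i'+1}$ (respectively $v_k=w_{k'}$) immediately give the claimed equalities. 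The paper instead takes exactly the route you mention at the end and set aside: it reduces to showing $\base_n(a,(v_i,v_{i+1}))[2]=\base_n(a,(v_j))[i+1]$ (and $\base_n(a,(v_k))[2]=\base_n(a,(v_j))[k+1]$) by two applications of Lemma~\ref{prescope}, inserting the prefix $(v_1,\dots,v_{i-1})$ at position $0$ and the suffix $(v_{i+2},\dots,v_k)$ at position $i+1$, so that the index bookkeeping is absorbed into the already proved insertion lemma. Your argument is more elementary and self-contained, and it makes the ``locality'' of mixed-base digits explicit, which is arguably more transparent; the paper's argument avoids re-deriving any digit formulas and stays entirely within the $\ins$/$\insover$ telescoping machinery it has just built, which it then reuses throughout Section~\ref{mixedBaseSect}. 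Both handle the boundary convention $v_0=w_0=\cvec{0}$, $m(n,\cvec{0})=1$ correctly, and your attention to that edge case and to the index shift between the length-$k$ directive sequence and the length-$(k+1)$ digit string is exactly the care the statement requires.
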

\begin{proof}
For the first claim of the lemma concerning $0\leq i<k$, $0\leq i'<k'$ it suffices to show that $\base_{n}(a,(v_i,v_{i+1}))[2]=\base_{n}(a,(v_j))[i+1]$, because from this it follows that
\begin{flalign*}
&\base_{n}(a,(v_j))[i+1]=\base_{n}(a,(v_i,v_{i+1}))[2] \\
&=\base_{n}(a,(w_{i'},w_{i'+1}))[2]=\base_{n}(a,(w_j))[i'+1].
\end{flalign*}
Let $\base_{n}(a,(v_i,v_{i+1}))=(a_0,a_1,a_2)$. By one application of Lemma~\ref{prescope}
\begin{flalign*}
&\base_{n}(a,(v_1,\dots,v_i,v_{i+1}))=\base_{n}(a,(v_i,v_{i+1})\ins_0(v_1,\dots,v_{i-1})) \\
&=(a_0,a_1,a_2)\insover_1(b_0,\dots,b_{i-1})=(b_0,\dots,b_{i-1},a_1,a_2)
\end{flalign*}
for some $b_j\in\N$. By another application of the same lemma
\begin{flalign*}
&\base_{n}(a,(v_j))=\base_{n}(a,(v_1,\dots,v_{i+1})\ins_{i+1}(v_{i+2},\dots,v_k)) \\
&=(b_0,\dots,b_{i-1},a_1,a_2)\insover_{i+2}(c_{i+1},\dots,c_k)=(b_0,\dots,b_{i-1},a_1,c_{i+1},\dots,c_k)
\end{flalign*}
for some $c_j\in\N$. Therefore $\base_{n}(a,(v_i,v_{i+1}))[2]=a_1=\base_{n}(a,(v_j))[i+1]$.

In a similar manner, for the last claim of the lemma it suffices to show that $\base_{n}(a,(v_k))[2]=\base_{n}(a,(v_j))[k+1]$. Let $\base_n(a,(v_k))=(a_0,a_1)$. By Lemma~\ref{prescope}
\begin{flalign*}
&\base_{n}(a,(v_1,\dots,v_{k-1},v_{k}))=\base_{n}(a,(v_{k})\ins_0(v_1,\dots,v_{k-1})) \\
&=(a_0,a_1)\insover_1(b_0,\dots,b_{k-1})=(b_0,\dots,b_{k-1},a_1)
\end{flalign*}
for some $b_j\in\N$. Therefore $\base_{n}(a,(v_k))[2]=a_1=\base_{n}(a,(v_j))[k+1]$.
\end{proof}

\begin{lemma}\label{twostepbase}
let $n$ be a prebasis, let $a\in\N$ and let $p_1,p_2,q_1,q_2\in\Z^d$ be such that $q_1\geq q_2$ and $p_1\geq p_1+q_1\geq p_1+q_2\geq p_2$ are directive sequences. If $\base_{n}(a,(p_1,p_2))=(a_0,a_1,a_2)$, then 
\[\base_{n}(a,(p_1+q_1,p_1+q_2))[2]=\base_{n}(a_1,(q_1,q_2))[2].\]
\end{lemma}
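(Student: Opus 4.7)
The plan is to build a longer directive sequence that contains both $(p_1,p_2)$ and $(p_1+q_1,p_1+q_2)$ as subsegments, and then compare the base representations along it via Lemma~\ref{prescope} and Lemma~\ref{endscope}.

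First I would form the extended directive sequence $(p_1,\,p_1+q_1,\,p_1+q_2,\,p_2)$, which is legal because the chain $\cvec{0}\geq p_1\geq p_1+q_1\geq p_1+q_2\geq p_2$ is guaranteed by hypothesis. In terms of the insertion notation, this is $(p_1,p_2)\ins_1 (p_1+q_1,p_1+q_2)$. Applying Lemma~\ref{prescope} with the roles $(v_j)_{j=1}^2=(p_1,p_2)$, $(w_j)_{j=1}^2=(p_1+q_1,p_1+q_2)$ and $i=1$, and using that $w_j-v_1=q_j$ for $j=1,2$, gives
\[
\base_n(a,(p_1,p_1+q_1,p_1+q_2,p_2))=(a_0,a_1,a_2)\insover_2\base_n(a_1,(q_1,q_2)).
\]
Writing $\base_n(a_1,(q_1,q_2))=(c_0,c_1,c_2)$, the right-hand side becomes $(a_0,c_0,c_1,c_2,a_2)$. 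In particular its entry in position $3$ is $c_1=\base_n(a_1,(q_1,q_2))[2]$.

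Next I would use Lemma~\ref{endscope} to transfer this middle entry to a base representation with respect to the shorter sequence $(p_1+q_1,p_1+q_2)$. Take the two directive sequences $(v_j)=(p_1+q_1,p_1+q_2)$ and $(w_j)=(p_1,p_1+q_1,p_1+q_2,p_2)$ together with the indices $i=1$ and $i'=2$. Then $v_1=p_1+q_1=w_2$ and $v_2=p_1+q_2=w_3$, so Lemma~\ref{endscope} yields
\[
\base_n(a,(p_1+q_1,p_1+q_2))[2]=\base_n(a,(p_1,p_1+q_1,p_1+q_2,p_2))[3].
\]
Combining the two displayed equations gives $\base_n(a,(p_1+q_1,p_1+q_2))[2]=c_1=\base_n(a_1,(q_1,q_2))[2]$, as required.

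There is no real obstacle here; the work is entirely bookkeeping, and the only thing to be careful about is verifying that each insertion really produces a valid (weakly decreasing) directive sequence so that Lemma~\ref{prescope} and Lemma~\ref{endscope} are applicable. Both conditions follow directly from the hypotheses $q_1\geq q_2$ and $p_1\geq p_1+q_1\geq p_1+q_2\geq p_2$.
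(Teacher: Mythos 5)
Your proposal is correct and matches the paper's own argument essentially step for step: both apply Lemma~\ref{prescope} to the extended sequence $(p_1,p_1+q_1,p_1+q_2,p_2)$ to obtain the $\insover_2$ identity with digits $\base_n(a_1,(q_1,q_2))$, and then use Lemma~\ref{endscope} to identify the third entry with $\base_n(a,(p_1+q_1,p_1+q_2))[2]$. The index bookkeeping and the checks that all sequences involved are valid directive sequences are accurate.
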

\begin{proof}
By applying Lemma~\ref{prescope}
\begin{flalign*}
&\base_{n}(a,(p_1,p_1+q_1,p_1+q_2,p_2)) \\
&=\base_{n}(a,(p_1,p_2))\insover_2\base_{n}(a_1,(q_1,q_2)),
\end{flalign*}
and therefore by Lemma~\ref{endscope}
\begin{flalign*}
&\base_{n}(a,(p_1+q_1,p_1+q_2))[2] \\
&=\base_{n}(a,(p_1,p_1+q_1,p_1+q_2,p_2))[3]=\base_{n}(a_1,(q_1,q_2))[2].
\end{flalign*}
\end{proof}

For a map $\emb:\{1,\dots,d'\}\to\{1,\dots,d\}$ and $p\in\Z^d$ there is an affine map $\Emb_{p,\emb}:\R^{d'}\to \R^d$ defined by $\Emb_{p,\emb}(\cvec{0})=p$ and $\Emb_{p,\emb}(e_i)=p+e_{\emb(i)}$ for $1\leq i\leq d'$.

\begin{lemma}\label{subbasis}
Let $\emb:\{1,\dots,d'\}\to\{1,\dots,d\}$ be injective and let $n$, $n'$ be prebases of dimensions $d$ and $d'$ satisfying $n'[i]=n[\emb(i)]$ for $1\leq i\leq d'$. For a $d'$-dimensional directive sequence $v_1\geq\cdots\geq v_k$, for $w_i=\Emb_{\cvec{0},\emb}(v_i)$ and $a\in\N$ it holds that $\base_{n'}(a,(v_1,\dots,v_k))=\base_{n}(a,(w_1,\dots,w_k))$.
\end{lemma}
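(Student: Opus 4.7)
The plan is to reduce the statement to the identity $m(n,w_i)=m(n',v_i)$ for each $i$, after which the conclusion follows directly from unfolding the definition $\base_{n'}(a,(v_i))=\base(a,m(n',(v_i)))$.

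First, I would unpack the definition of the affine map $\Emb_{\cvec{0},\emb}$. Since $\Emb_{\cvec{0},\emb}(\cvec{0})=\cvec{0}$ and $\Emb_{\cvec{0},\emb}(e_j)=e_{\emb(j)}$ for $1\leq j\leq d'$, writing $v_i=\sum_{j=1}^{d'} v_i[j]\,e_j$ and using linearity gives
\[
w_i=\Emb_{\cvec{0},\emb}(v_i)=\sum_{j=1}^{d'} v_i[j]\,e_{\emb(j)}.
\]
Because $\emb$ is injective, this means $w_i[\emb(j)]=v_i[j]$ for $1\leq j\leq d'$, and $w_i[\ell]=0$ whenever $\ell\in\{1,\dots,d\}\setminus \emb(\{1,\dots,d'\})$.

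Next I would compute $m(n,w_i)$ directly from its definition. Since every coordinate $\ell$ not in the image of $\emb$ contributes a factor $n[\ell]^{0}=1$, we get
\[
m(n,w_i)=\prod_{\ell=1}^{d}n[\ell]^{-w_i[\ell]}=\prod_{j=1}^{d'}n[\emb(j)]^{-v_i[j]}=\prod_{j=1}^{d'}n'[j]^{-v_i[j]}=m(n',v_i),
\]
using the hypothesis $n'[j]=n[\emb(j)]$ in the penultimate step.

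Finally, applying this equality coordinatewise yields that the mixed bases coincide: $m(n,(w_1,\dots,w_k))=m(n',(v_1,\dots,v_k))$ as elements of $\Zpos^{k}$. Hence
\[
\base_{n'}(a,(v_1,\dots,v_k))=\base\bigl(a,m(n',(v_i))\bigr)=\base\bigl(a,m(n,(w_i))\bigr)=\base_{n}(a,(w_1,\dots,w_k)),
\]
which is the desired identity. There is no real obstacle here; the entire content of the lemma is the bookkeeping observation that $\Emb_{\cvec{0},\emb}$ inserts zero coordinates at the indices outside $\emb(\{1,\dots,d'\})$, and those zero coordinates are exactly the ones that drop out of the product defining $m(n,\cdot)$.
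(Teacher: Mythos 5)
Your proposal is correct and follows essentially the same route as the paper: both proofs reduce the claim to the coordinate computation $m(n,w_i)=m(n',v_i)$, using that $\Emb_{\cvec{0},\emb}$ copies the coordinates of $v_i$ into the positions $\emb(1),\dots,\emb(d')$ and places zeros elsewhere, after which the equality of the mixed bases $m(n,(w_i))=m(n',(v_i))$ gives the equality of base representations by definition.
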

\begin{proof}
Observe that for any $v\in\Z^{d'}$, $1\leq j'\leq d'$ and $j=\emb(j')$ it holds that $\Emb_{\cvec{0},\emb}(v)[j]=v[j']$. If $j$ is not in the image of $\emb$, then $\Emb_{\cvec{0},\emb}(v)[j]=0$. Using the substitution $j=\emb(j')$ for $0\leq j'\leq d'$ below, we can compute for $1\leq i\leq k$ that
\begin{flalign*}
&m(n,w_i)=m(n,\Emb_{\cvec{0},\emb}(v_i))=\prod_{j=1}^d n[j]^{-\Emb_{\cvec{0},\emb}(v_i))[j]} \\
&=\prod_{j'=1}^{d'}n[\emb(j')]^{-\Emb_{\cvec{0},\emb}(v_i)[\emb(j')]}=\prod_{j'=1}^{d'}n'[j']^{-v_i[j']}=m(n',v_i),
\end{flalign*}
and therefore
\[\base_{n}(a,(w_i))=\base(a,m(n,(w_i))=\base(a,m(n'(v_i)))=\base_{n'}(a,(v_i)).\]
\end{proof}

\section{Multiplication cubes}\label{WangSect}

\subsection{Preliminaries on multiplication cubes}\label{cubePre}

Our model for the $d$-dimensional hypercube is the set $[-1,0]^d$. The bottom and top $(d-1)$-dimensional hyperfaces \emph{orthogonal} to $e_i$ are
\[\boti_i=\{v\in [-1,0]^d\mid v[i]=-1\}\mbox{ and }\topi_i=\{v\in [-1,0]^d\mid v[i]=0\}\]
respectively.

One can take a collection of hypercubes, color their $(d-1)$-dimensional hyperfaces $\boti_i$ and $\topi_i$ with various colors, and form tessellations of the space with these cubes in such a way that the adjacent hyperfaces of neighboring hypercubes have matching colors. An example of such a colored set of 2-dimensional cubes and a part of a 2-dimensional tessellation is presented in Figure~\ref{tiling}. More precisely (and more abstractly) this is done as in the following definition.

\begin{figure}[ht]
\centering
\begin{tikzpicture}[scale=1]

\tile[0,2,5](0,0); \tile[1,2,5](2,0);
\tile[2,2,5](0,-1.5); \tile[3,2,5](2,-1.5);
\tile[4,2,5](0,-3); \tile[5,2,5](2,-3);
\tile[6,2,5](0,-4.5); \tile[7,2,5](2,-4.5);
\tile[8,2,5](0,-6); \tile[9,2,5](2,-6);

\tile[0,2,5](5,-0.5);\tile[0,2,5](6,-0.5);\tile[0,2,5](7,-0.5);\tile[0,2,5](8,-0.5);\tile[0,2,5](9,-0.5);\tile[0,2,5](10,-0.5);
\tile[4,2,5](5,-1.5);\tile[8,2,5](6,-1.5);\tile[6,2,5](7,-1.5);\tile[2,2,5](8,-1.5);\tile[4,2,5](9,-1.5);\tile[8,2,5](10,-1.5);
\tile[0,2,5](5,-2.5);\tile[1,2,5](6,-2.5);\tile[3,2,5](7,-2.5);\tile[6,2,5](8,-2.5);\tile[2,2,5](9,-2.5);\tile[5,2,5](10,-2.5);
\tile[0,2,5](5,-3.5);\tile[0,2,5](6,-3.5);\tile[0,2,5](7,-3.5);\tile[1,2,5](8,-3.5);\tile[2,2,5](9,-3.5);\tile[5,2,5](10,-3.5);
\tile[0,2,5](5,-4.5);\tile[0,2,5](6,-4.5);\tile[0,2,5](7,-4.5);\tile[0,2,5](8,-4.5);\tile[0,2,5](9,-4.5);\tile[1,2,5](10,-4.5);
\tile[0,2,5](5,-5.5);\tile[0,2,5](6,-5.5);\tile[0,2,5](7,-5.5);\tile[0,2,5](8,-5.5);\tile[0,2,5](9,-5.5);\tile[0,2,5](10,-5.5);

\fill   (9,-1.5) circle (1mm);

\end{tikzpicture}
\caption{The multiplication cube set $T_{(2,5)}$ and a part of a tiling from $X_{(2,5)}=X_{T_{(2,5)}}$ (with the upper right corner of the cube at the origin marked by a black dot). Consecutive powers of two (starting with $4,8,16,\dots$) can be found in the tiling along diagonals that go from bottom left to top right: for an explanation of this, see Example~\ref{powersof2} or Proposition~\ref{infShiftMul}.}
\label{tiling}
\end{figure}
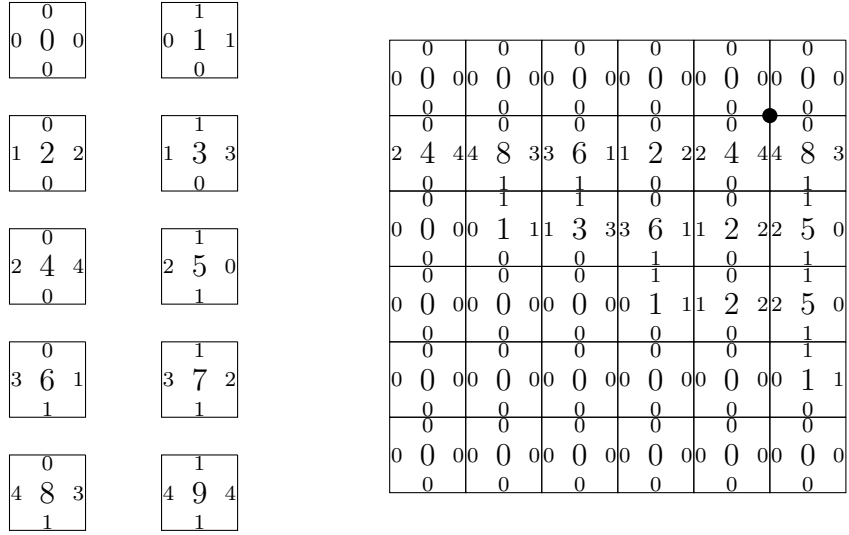

\begin{definition}
Let $T$ and $C$ be finite sets (the set of hypercubes and the set of colors) and for some $d\in\Zpos$ let $\boti_i:T\to C$ and $\topi_i: T\to C$ be functions for $1\leq i\leq d$ (the labeling functions). The set $T$  (with associated, implicit $C$, $\boti_i$ and $\topi_i$) is then called a Wang hypercube set. A valid tiling or tessellation over $T$ is a map $f:\Z^d\to T$ satisfying $\topi_i(f[z])=\boti_i(f[z+e_i])$ for all $z\in\Z^d$ and $1\leq i\leq d$. The set of all valid tilings over $T$ is denoted by $X_T$.
\end{definition}

The pair $(X_T,\sigma_z)$ is a dynamical system for all $z\in\Z^d$, and $X_T$ is in fact an example of a so-called subshift of finite type. Wang hypercubes and their tessellations are the most commonly studied in the case $d=2$, and particularly then the elements of $T$ are called Wang tiles. These originate from Wang's paper~\cite{Wang61}.

We will proceed to define a particular class of Wang hypercubes, the so-called multiplication cubes. These cubes will in fact have labels on all of their lower-dimensional hyperfaces, not just the $(d-1)$-dimensional ones. The set of $k$-dimensional hyperfaces within the $d$-dimensional hypercube is denoted by $S_{d,k}$ and the set of all hyperfaces is denoted by $S_d$. Given $s\in S_d$, the set of all hyperfaces that are subsets of $s$ is denoted by $S_d(s)$.

We develop two different notations to refer to individual hyperfaces. For the first notation, whenever $p,u\in\{-1,0\}^d$ are orthogonal (in other words, $p[i]=u[i]=-1$ cannot happen for $1\leq i\leq d$), we say that the hyperface at $p$ along $u$ is
\[\faceat(p,u)=\{v\in [-1,0]^d\mid v[i]=p[i]\mbox{ when }u[i]=0\}.\]

For the second notation, let
\[V_d=\{(v_1,v_2)\in (\{-1,0\}^d)^2\mid v_1\geq v_2\}.\]
For any $(v_1,v_2)\in V_d$, the minimal $k$-dimensional hyperface of $[-1,0]^d$ containing both $v_1$ and $v_2$ is $\faceat(v_1,v_2-v_1)$, where $k$ is the number of ones in $v_1-v_2$. This gives a one-to-one correspondence between $V_d$ and $S_d$.  

Let $n$ be a prebasis of dimension $d$. To any $a\in\digs_N$ with $N=\prod_{i=1}^d n[i]$ we will associate a $d$-dimensional Wang hypercube, a so-called multiplication cube $\cube_{n}(a)$, whose underlying structure is that of a function:
\[\cube_{n}(a):V_d\to\digs_N,\quad \cube_{n}(a)[v_1,v_2]=\base_{n}(a,(v_1,v_2))[2]\mbox{ for }(v_1,v_2)\in V_d.\]
The interpretation is that $\cube_{n}(a)[v_1,v_2]$ is the label of the hyperface spanned by the vectors $v_1$ and $v_2$: this way we get labels on all lower-dimensional hyperfaces. If $s\in S_d$ is the hyperface spanned by $v_1$ and $v_2$, we may also write $\cube_{n}(a)[v_1,v_2]=s(\cube_{n}(a))=s\cube_n(a)$. In particular, the labels of the bottom and top hyperfaces of $\cube_{n}(a)$ orthogonal to $e_i$ are
\[\boti_i(\cube_{n}(a))=\cube_{n}(a)[-e_i,-\cvec{1}], \quad \topi_i(\cube_{n}(a))=\cube_{n}(a)[\cvec{0},-\cvec{1}+e_i]\]
respectively. To write this out more explicitly, let $a=a_1n[i]+a_0$ be such that $\base(a,(N,n[i]))=(0,a_1,a_0)$ and let $a=a_2'\prod_{j\neq i}n[j]+a_1'$ be such that $\base(a,(\prod_{j\neq i}n[i],1))=(a_2',a_1',0)$. Then we see that
\[\boti_i(\cube_{n}(a))=a_1,\qquad \topi_i(\cube_{n}(a))=a_1'.\]
An example of a $3$-dimensional multiplication cube together with (some of) the labels of faces and edges is presented in Figure~\ref{cube}.

\begin{figure}[ht]
\centering
\begin{tikzpicture}[scale=0.6]

\cubefig[10,2,3,5](0,0);
\tilescale[10,3,5,3](4,-1);\tilescale[3,2,5,3](8,-1);\tilescale[4,2,3,3](12,-1);

\end{tikzpicture}
\caption{The multiplication cube $\cube_{(2,3,5)}(10)$ with the faces $\topi_1$ (right), $\boti_2$ (front) and $\topi_3$ (top) visible. One can also verify that these faces and their adjacent edges yield the lower-dimensional multiplication cubes $\cube_{(3,5)}(10)$, $\cube_{(2,5)}(3)$ and $\cube_{(2,3)}(4)$.}
\label{cube}
\end{figure}
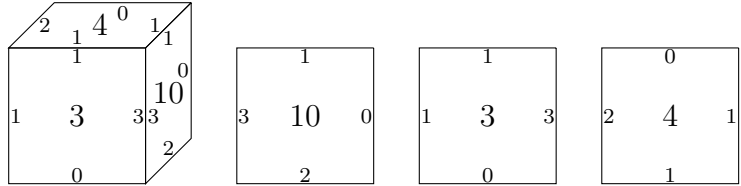

For a prebasis $n$ of dimension $d$ and $N=\prod_{i=1}^d n[i]$, the set of base-$n$ multiplication cubes and the valuations of cubes are defined as
\[T_{n}=\{\cube_{n}(a)\mid a\in\digs_N\}, \quad \val_{n}(t)=t[\cvec{0},-\cvec{1}]\mbox{ for }t\in T_n.\]
In other words, $\val_{n}(t)$ is the label of the unique $d$-dimensional hyperface of $t$. More importantly, this also equals the original value that was used to form the hypercube and the maps $\val_{n}$, $\cube_{n}$ are inverses of each other, because $a=0\cdot N+a\cdot 1+0\cdot 1$ and $\base(a,(N,1))=(0,a,0)$ for any $a\in\digs_N$, and therefore
\[\val_{n}(\cube_{n}(a))=\cube_{n}(a)[\cvec{0},-\cvec{1}]=a\mbox{ and }\cube_{n}(\val_{n}(\cube_{n}(a)))=\cube_{n}(a).\]
The set of valid tilings over $T_n$ is denoted by $X_n=X_{T_n}$.

Later we will see that calling the elements of $T_n$ multiplication cubes is justified. If a tiling contains a base-$N$ representation of a real number on its main diagonal, then shifting the tiling by $\sigma_{e_i}$ corresponds to multiplying this real number by $n[i]$ as will be seen in Proposition~\ref{infShiftMul}. The tilings will also be connected to so-called multiplication automata in Subsection~\ref{WangCASubSect}.

We observe that in the case of a one-dimensional prebasis $n=(N)$ the labels of top and bottom hyperfaces satisfy $\boti_1(\cube_{(N)}(a))=\topi_1(\cube_{(N)}(a))=0$ for all $a\in\digs_N$ and therefore $X_{(N)}=T_{(N)}^\Z$. For a higher dimensional example, Figure~\ref{tiling} shows the tile set $T_{(2,5)}$ and a part of a tiling from $X_{(2,5)}$. In the higher dimensional cases it is not necessarily obvious that there exist any valid tilings except the trivial one consisting only of copies of $\cube_n(0)$, but in Corollary~\ref{tessExist} we will see that any arrangement of multiplication cubes on the main diagonal of the $d$-dimensional space can be extended to a valid tessellation of the whole space.

\begin{lemma}\label{prebasispath}
For a directive sequence $(v_j)_{j=1}^k$ and $a\in\N$ it holds that
\[\base_{n}(a,(v_1,v_k))[2]=\sum_{i=1}^{k-1}m(n,v_{i}-v_1)\base_{n}(a,(v_i,v_{i+1}))[2].\]
\end{lemma}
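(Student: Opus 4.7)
The plan is to deduce the formula by directly comparing two mixed-base expansions of $a$: the three-term one coming from the directive sequence $(v_1,v_k)$ and the $(k+1)$-term one coming from $(v_1,\dots,v_k)$. All of the real work is already packaged in Lemma~\ref{endscope}, so I only need to identify the right digits.

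First I would set $\base_{n}(a,(v_j)_{j=1}^k)=(a_0,a_1,\dots,a_k)$ and $\base_{n}(a,(v_1,v_k))=(b_0,b_1,b_2)$, so that by definition of the mixed-base representation (with the convention $m(n,v_0)=1$)
\[a = a_0 + \sum_{i=1}^{k} a_i\, m(n,v_i) = b_0 + b_1\, m(n,v_1) + b_2\, m(n,v_k).\]
Next I would apply Lemma~\ref{endscope} three times. Matching $v_0=\cvec{0}$ and $v_1$ in both sequences gives $a_0=b_0$. Matching the final entries $v_k$ via the last assertion of the lemma gives $a_k=b_2$. For each $1\leq i\leq k-1$, matching the consecutive pair $(v_i,v_{i+1})$ inside $(v_1,\dots,v_k)$ with positions $(1,2)$ inside the length-two sequence $(v_i,v_{i+1})$ gives $a_i=\base_{n}(a,(v_i,v_{i+1}))[2]$. (In each case the index constraints $0\le i<k$ etc.\ in Lemma~\ref{endscope} are trivially met because of the ordering $\cvec{0}\geq v_1\geq\cdots\geq v_k$.)

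Substituting $b_0=a_0$ and $b_2=a_k$ into the two expansions of $a$ and cancelling yields
\[b_1\, m(n,v_1) = \sum_{i=1}^{k-1} a_i\, m(n,v_i).\]
Dividing through by $m(n,v_1)$ and invoking the elementary identity $m(n,v_i)/m(n,v_1)=m(n,v_i-v_1)$ (which follows from $m(n,v)=\prod_j n[j]^{-v[j]}$ and is a positive integer since $v_i\leq v_1$) produces
\[\base_{n}(a,(v_1,v_k))[2] = b_1 = \sum_{i=1}^{k-1} m(n,v_i-v_1)\,\base_{n}(a,(v_i,v_{i+1}))[2],\]
which is the claim. There is no genuine obstacle here; the only thing to watch is the bookkeeping for the three distinct invocations of Lemma~\ref{endscope} (left endpoint, right endpoint, and each interior consecutive pair), all of which use the convention $v_0=\cvec{0}$ in a uniform way.
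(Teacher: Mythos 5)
Your proof is correct and follows essentially the same route as the paper: write out the two mixed-base expansions of $a$, use Lemma~\ref{endscope} to identify the shared end digits and to identify each interior digit $a_i$ with $\base_{n}(a,(v_i,v_{i+1}))[2]$, then cancel and divide by $m(n,v_1)$. The bookkeeping of the three invocations of Lemma~\ref{endscope} is handled correctly, so nothing further is needed.
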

\begin{proof}
The cases $k\leq 2$ are trivial. If $k\geq 3$, write $\base_{n}(a,(v_1,v_k))[2]=(a_0',a_1',a_k')$ and $\base_{n}(a,(v_j)_{j=1}^k)=(a_j)_{j=0}^k$. It follows that
\[a_0'+a_1'm(n,v_1)+a_k'm(n,v_k)=a=\sum_{i=0}^k a_i m(n,v_i)\]
(with the convention that $v_0=\cvec{0}$). From Lemma~\ref{endscope} we see that
\[a_i=\base_{n}(a,(v_j)_{j=1}^k)[i+1]=\base_{n}(a,(v_i,v_{i+1}))[2]\mbox{ for }0\leq i< k.\]
The first part of Lemma~\ref{endscope} shows that $a_0=a_0'$ and the last part of Lemma~\ref{endscope} shows that $a_k=a_k'$, so
\[a_1'=\sum_{i=1}^{k-1}a_i m(n,v_i)/m(n,v_1)=\sum_{i=1}^{k-1}m(n,v_i-v_1)\base_{n}(a,(v_i,v_{i+1}))[2].\]
\end{proof}

\begin{lemma}\label{cubePath}
For a binary directive sequence $(v_i)_{i=1}^k$ and $t\in T_{n}$ it holds that
\[m(n,v_1)t[v_1,v_k]=\sum_{j=1}^{k-1}m(n,v_j)t[v_j,v_{j+1}].\]
\end{lemma}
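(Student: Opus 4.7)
The plan is to reduce this directly to Lemma~\ref{prebasispath} applied to $a = \val_n(t)$, and then renormalize by multiplying through by $m(n,v_1)$. Since $t \in T_n$, setting $a = \val_n(t)$ gives $t = \cube_n(a)$, and by definition of $\cube_n$ together with the fact that $(v_j, v_{j+1}) \in V_d$ holds automatically for a binary directive sequence (each $v_j \in \{-1,0\}^d$ and $v_j \geq v_{j+1}$), we get
\[
t[v_j, v_{j+1}] = \base_n(a, (v_j, v_{j+1}))[2]
\]
for all $1 \leq j \leq k-1$, and similarly $t[v_1, v_k] = \base_n(a, (v_1, v_k))[2]$.

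Next, I would invoke Lemma~\ref{prebasispath} with the same directive sequence to obtain
\[
t[v_1, v_k] = \sum_{i=1}^{k-1} m(n, v_i - v_1)\, t[v_i, v_{i+1}].
\]
Multiplying both sides by $m(n, v_1)$ reduces the claim to verifying the multiplicative identity $m(n, v_1) \cdot m(n, v_i - v_1) = m(n, v_i)$. From the definition $m(n, v) = \prod_{j=1}^d n[j]^{-v[j]}$, this follows from the trivial exponent identity $n[j]^{-v_1[j]} \cdot n[j]^{-(v_i[j] - v_1[j])} = n[j]^{-v_i[j]}$ applied coordinatewise.

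I do not expect a serious obstacle here: the lemma is essentially a ``dehomogenized'' restatement of Lemma~\ref{prebasispath}, recast in terms of the cube-label notation $t[\cdot,\cdot]$ rather than $\base_n$ directly, and rescaled so that the coefficient of $t[v_j, v_{j+1}]$ becomes $m(n, v_j)$ rather than $m(n, v_j - v_1)$. The only point that requires a moment of attention is making sure every index pair appearing as an argument to $t$ lies in $V_d$, which is exactly why the binary hypothesis on the directive sequence is imposed.
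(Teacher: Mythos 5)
Your proposal is correct and follows essentially the same route as the paper: write $t=\cube_n(a)$, translate the cube labels $t[\cdot,\cdot]$ into $\base_n(a,(\cdot,\cdot))[2]$, apply Lemma~\ref{prebasispath}, and rescale by $m(n,v_1)$ using $m(n,v_1)\,m(n,v_i-v_1)=m(n,v_i)$. The paper merely phrases this as a chain of equivalences rewriting the claimed identity into the statement of Lemma~\ref{prebasispath}, which is the same argument run in the opposite direction.
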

\begin{proof}
Let $t=\cube_{n}(a)$. We rewrite the statement of the lemma
\begin{flalign*}
&m(n,v_1)\cube_{n}(a)[v_1,v_k]=\sum_{j=1}^{k-1}m(n,v_j)\cube_{n}(a)[v_j,v_{j+1}] \\
\iff&m(n,v_1)\base_{n}(a,(v_1,v_k))[2]=\sum_{j=1}^{k-1}m(n,v_j)\base_{n}(a,(v_j,v_{j+1}))[2] \\
\iff&\base_{n}(a,(v_1,v_k))[2]=\sum_{j=1}^{k-1}m(n,v_j-v_1)\base_{n}(a,(v_j,v_{j+1}))[2],
\end{flalign*}
and the last equality holds by Lemma~\ref{prebasispath}.
\end{proof}

\subsection{Lower dimensional matchings in tilings by multiplication cubes}\label{lowerMatchSubSect}

Given a prebasis $n$ of dimension $d$, the $d-1$ dimensional hyperfaces match in a tessellation of $X_{n}$ by definition, but there are also lower dimensional matchings as we will see in Theorem~\ref{subfaceNeigh}.

Recall that any $s\in S_d$ can be represented in the form $s=\faceat(p,u)$ for some orthogonal $p,u\in\{-1,0\}^d$. If $u$ contains $d'$ non-zeroes and $\emb:\{1,\dots,d'\}\to\{1,\dots,d\}$ is an injection with the image set $\{i\mid u[i]=-1\}$, then the injective affine map $\Emb_{p,\emb}:\R^{d'}\to \R^d$ defined by $\Emb_{p,\emb}(\cvec{0})=p$ and $\Emb_{p,\emb}(e_i)=p+e_{\emb(i)}$ for $1\leq i\leq d'$ satisfies $\Emb_{p,\emb}([-1,0]^{d'})=s$.

\begin{lemma}\label{subface}
Let $s=\faceat(p,u)\in S_{d,d'}$, $s'\in S_{d'}$ and $t\in T_{n}$. Let $\emb:\{1,\dots,d'\}\to\{1,\dots,d\}$ be an injection with the image set $D=\{i\mid u[i]=-1\}$ and let $n'\in\Zpos^{d'}$ be defined by $n'[i]=n[\emb(i)]$ for $1\leq i\leq d'$. Then $(\Emb_{p,\emb}(s'))(t)=s'(\cube_{n'}(s(t)))$.
\end{lemma}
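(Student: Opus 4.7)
The plan is to reduce both sides of the claimed equality to mixed-base digits via the $V_d$ pair notation, then chain together Lemma~\ref{twostepbase} and Lemma~\ref{subbasis}. First I would translate the face notation. Writing $s=\faceat(p,u)$ with $p,u\in\{-1,0\}^d$ orthogonal, the face $s$ corresponds under the $\faceat$--$V_d$ bijection to the pair $(p,p+u)\in V_d$, so that by definition of $\cube_n$ we get $s(t)=t[p,p+u]=\base_n(\val_n(t),(p,p+u))[2]$. Similarly, uniquely express $s'=\faceat(v_1',v_2'-v_1')$ for some $(v_1',v_2')\in V_{d'}$, so $s'(\cube_{n'}(s(t)))=\base_{n'}(s(t),(v_1',v_2'))[2]$.

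Next I would analyze $\Emb_{p,\emb}(s')$. Orthogonality of $p$ and $u$ forces $p[i]=0$ for each $i\in D$, and $\Emb_{\cvec{0},\emb}(v')[i]=0$ for $i\notin D$. Adding these two facts gives $\Emb_{p,\emb}(v')=p+\Emb_{\cvec{0},\emb}(v')$ for every $v'\in\{-1,0\}^{d'}$, so a direct coordinate comparison yields $\Emb_{p,\emb}(s')=\faceat(\Emb_{p,\emb}(v_1'),\Emb_{p,\emb}(v_2')-\Emb_{p,\emb}(v_1'))$. Consequently, setting $q_i=\Emb_{\cvec{0},\emb}(v_i')$, the left-hand side of the lemma becomes $(\Emb_{p,\emb}(s'))(t)=\base_n(\val_n(t),(p+q_1,p+q_2))[2]$.

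I would then apply Lemma~\ref{twostepbase} with $p_1=p$, $p_2=p+u$, and these $q_1,q_2$. The required directive-sequence chain $p\geq p+q_1\geq p+q_2\geq p+u$ amounts to $q_1\leq\cvec{0}$, $q_1\geq q_2$, and $q_2\geq u$: the first two follow from $\cvec{0}\geq v_1'\geq v_2'$ pushed through the linear map $\Emb_{\cvec{0},\emb}$, while the third holds coordinatewise, since $q_2[i]=0=u[i]$ for $i\notin D$ and $q_2[i]\in\{-1,0\}\geq-1=u[i]$ for $i\in D$. Because the middle coordinate of $\base_n(\val_n(t),(p,p+u))$ is precisely $s(t)$, Lemma~\ref{twostepbase} yields
$$\base_n(\val_n(t),(p+q_1,p+q_2))[2]=\base_n(s(t),(q_1,q_2))[2].$$
A single application of Lemma~\ref{subbasis} to the injection $\emb$ and the prebases $n,n'$ (using $n'[i]=n[\emb(i)]$) rewrites the right-hand side as $\base_{n'}(s(t),(v_1',v_2'))[2]=s'(\cube_{n'}(s(t)))$, completing the proof. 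The only delicate step is the bookkeeping in the second paragraph: identifying $\Emb_{p,\emb}$ as a translate of $\Emb_{\cvec{0},\emb}$ via orthogonality of $p$ and $u$, and checking the directive-sequence inequalities; once these are in place, the conclusion is a clean composition of the two mixed-base lemmas from Section~\ref{mixedBaseSect}.
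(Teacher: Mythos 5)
Your proposal is correct and takes essentially the same route as the paper: both sides are rewritten as middle digits of mixed-base expansions via the $V_d$-pair description of faces (with $\Emb_{p,\emb}(s')$ identified as the face on the pair $(\Emb_{p,\emb}(v_1'),\Emb_{p,\emb}(v_2'))$), the left-hand digit is transported to a digit of $s(t)$ by the telescoping machinery, and Lemma~\ref{subbasis} converts between the prebases $n$ and $n'$. The only cosmetic difference is that you invoke the packaged Lemma~\ref{twostepbase}, whereas the paper inlines the same combination of Lemma~\ref{prescope} and Lemma~\ref{endscope} from which that lemma is derived.
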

\begin{proof}
Let $s'=\faceat(p',u')$ and $t=\cube_{n}(a)$ (it follows that $\Emb_{p,\emb}(p')\leq \Emb_{p,\emb}(\cvec{0})$ and $\Emb_{p,\emb}(p'+u')\geq \Emb_{p,\emb}(-\cvec{1})$). Let $\base_{n}(a,(\Emb_{p,\emb}(\cvec{0}),\Emb_{p,\emb}(-\cvec{1})))=(a_0,a_1,a_2)$ and $\base_{n}(a_1,(\Emb_{p,\emb}(p')-\Emb_{p,\emb}(\cvec{0}),\Emb_{p,\emb}(p'+u')-\Emb_{p,\emb}(\cvec{0}))=(b_0,b_1,b_2)$. Then by Lemma~\ref{prescope}
\[\base_{n}(a,(\Emb_{p,\emb}(\cvec{0}),\Emb_{p,\emb}(p'),\Emb_{p,\emb}(p'+u'),\Emb_{p,\emb}(-\cvec{1})))=(a_0,b_0,b_1,b_2,a_2).\]

From this we can see that
\begin{flalign*}
(\Emb_{p,\emb}(s'))(t)&=t[\Emb_{p,\emb}(p'),\Emb_{p,\emb}(p'+u')]=\base_{n}(a,(\Emb_{p,\emb}(p'),\Emb_{p,\emb}(p'+u')))[2] \\
\overset{L.~\ref{endscope}}&{=}\base_{n}(a,(\Emb_{p,\emb}(\cvec{0}),\Emb_{p,\emb}(p'),\Emb_{p,\emb}(p'+u'),\Emb_{p,\emb}(\cvec{-1})))[3]=b_1.
\end{flalign*}

On the other hand,
\[s(t)=\base_{n}(a,(\Emb_{p,\emb}(\cvec{0}),\Emb_{p,\emb}(-\cvec{1})))[2]=a_1\]
and
\begin{flalign*}
s'(\cube_{n'}(s(t)))&=\cube_{n'}(s(t))[p',p'+u']=\base_{n'}(s(t),(p',p'+u'))[2] \\
&=\base_{n'}(a_1,(p',p'+u'))[2] \\
\overset{L.~\ref{subbasis}}&{=}\base_{n}(a_1,(\Emb_{\cvec{0},\emb}(p'),\Emb_{\cvec{0},\emb}(p'+u')))[2]=b_1.
\end{flalign*}
We conclude that
\[(\Emb_{p,\emb}(s'))(t)=b_1=s'(\cube_{n'}(s(t))).\]
\end{proof}

\begin{lemma}\label{subfaceSame}
Let $s_1,s_2\in S_d$, let $u\in\Z^d$ be such that $s_2=s_1+u$ and let $t_1,t_2\in T_{n}$ be such that $s_1(t_1)=s_2(t_2)$. Then for any $s_1'\in S_d(s_1)$ and $s_2'=s_1'+u\in S_d(s_2)$ it holds that $s_1'(t_1)=s_2'(t_2)$.
\end{lemma}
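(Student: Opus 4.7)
The approach is to reduce everything to Lemma~\ref{subface}, which reinterprets the label of a sub-hyperface of $t$ as the label of a corresponding face of the lower-dimensional multiplication cube $\cube_{n'}(s(t))$ determined solely by $s(t)$. Once both $s_1'(t_1)$ and $s_2'(t_2)$ are rewritten in this form, the hypothesis $s_1(t_1) = s_2(t_2)$ will give the conclusion immediately.

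First, I would write $s_1 = \faceat(p_1, u_{\mathrm{dir}})$ for some orthogonal $p_1, u_{\mathrm{dir}} \in \{-1,0\}^d$. Since $s_2 = s_1 + u$ is a translate of $s_1$, it is a hyperface of the same dimension $d'$ and spanned in the same coordinate directions, so $s_2 = \faceat(p_2, u_{\mathrm{dir}})$ with $p_2 = p_1 + u$. Let $D = \{i \mid u_{\mathrm{dir}}[i] = -1\}$, pick any injection $\emb \colon \{1,\dots,d'\} \to \{1,\dots,d\}$ with image $D$, and set $n'[i] = n[\emb(i)]$ for $1 \leq i \leq d'$. Then $\Emb_{p_1,\emb}$ and $\Emb_{p_2,\emb}$ map $[-1,0]^{d'}$ bijectively onto $s_1$ and $s_2$ respectively, and they differ by the translation $u$, since
\[
\Emb_{p_2,\emb}(x) = p_2 + (\Emb_{p_1,\emb}(x) - p_1) = \Emb_{p_1,\emb}(x) + u.
\]

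Next, because $s_1' \in S_d(s_1)$, there is a unique $s' \in S_{d'}$ with $\Emb_{p_1,\emb}(s') = s_1'$. Applying the translation relation above gives
\[
\Emb_{p_2,\emb}(s') = \Emb_{p_1,\emb}(s') + u = s_1' + u = s_2'.
\]
Now Lemma~\ref{subface}, applied once to $t_1$ with base point $p_1$ and once to $t_2$ with base point $p_2$, yields
\[
s_1'(t_1) = \bigl(\Emb_{p_1,\emb}(s')\bigr)(t_1) = s'\bigl(\cube_{n'}(s_1(t_1))\bigr),
\]
\[
s_2'(t_2) = \bigl(\Emb_{p_2,\emb}(s')\bigr)(t_2) = s'\bigl(\cube_{n'}(s_2(t_2))\bigr).
\]

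Finally, the hypothesis $s_1(t_1) = s_2(t_2)$ means the two multiplication cubes $\cube_{n'}(s_1(t_1))$ and $\cube_{n'}(s_2(t_2))$ are literally the same object, so evaluating the same face $s'$ on them gives the same number, proving $s_1'(t_1) = s_2'(t_2)$. There is no real obstacle here; the only mildly delicate point is matching up the two embeddings $\Emb_{p_1,\emb}$ and $\Emb_{p_2,\emb}$ so that the same $s' \in S_{d'}$ hits both $s_1'$ and $s_2'$, which is exactly what the translation relation $s_2 = s_1 + u$ secures.
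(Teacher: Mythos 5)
Your proposal is correct and is essentially identical to the paper's own proof: both write $s_1$ and $s_2$ as translates $\faceat(p,v)$ and $\faceat(p+u,v)$, pick the same embedding $\Emb$ and $s'\in S_{d'}$ with $\Emb_{p,\emb}(s')=s_1'$ and $\Emb_{p+u,\emb}(s')=s_2'$, and apply Lemma~\ref{subface} twice together with $s_1(t_1)=s_2(t_2)$.
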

\begin{proof}
Let $s_1=\faceat(p,v)$, which means that $s_2=\faceat(p+u,v)$. Assume that the cardinality of $D=\{i\mid v[i]=1\}$ is $d'$ and let $\iota:\{1,\dots,d'\}\to\{1,\dots,d\}$ be an injection with image $D$. Let $n'\in\N^{d'}$ be defined by $n'[i]=n[\emb(i)]$ for $1\leq i\leq d'$. Let $s'\in S_{d'}$ be such that $\Emb_{p,\emb}(s')=s_1'$, which means that $\Emb_{p+u,\emb}(s')=s_2'$. By Lemma~\ref{subface}
\begin{flalign*}
s_1'(t_1)&=\Emb_{p,\emb}(s')(t_1)=s'(\cube_{n'}(s_1(t_1))) \\
&=s'(\cube_{n'}(s_2(t_2)))=\Emb_{p+u,\emb}(s')(t_2)=s_2'(t_2).
\end{flalign*}
\end{proof}

\begin{theorem}\label{subfaceNeigh}
Let $f\in X_{n}$ and let $s,s+v\in S_d$ for some $v\in\Z^d$. For any $z\in \Z^d$ it holds that $(s+v)(f[z])=s(f[z+v])$.
\end{theorem}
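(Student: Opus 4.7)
The plan is to prove the equality $(s+v)(f[z])=s(f[z+v])$ by induction on $|v|_1:=\sum_{i=1}^d|v[i]|$, with the trivial base case $v=\cvec{0}$.

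The key structural observation I would establish first is that the hypothesis ``$s$ and $s+v$ both belong to $S_d$'' is very restrictive. Writing $s=\faceat(p,u)$ with orthogonal $p,u\in\{-1,0\}^d$, the requirement $s+v\subseteq[-1,0]^d$ forces $v[i]\in\{-1,0,1\}$ for every $i$, and, crucially, whenever $v[i]\neq 0$ one must have $u[i]=0$. In other words, $v$ is a translation of $s$ perpendicular to the directions in which $s$ itself has any extent. This is the fact that makes the induction work.

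For the case $|v|_1=1$, say $v=e_i$, the observation forces $s\subseteq\boti_i$ and $s+e_i\subseteq\topi_i$. I would then invoke Lemma~\ref{subfaceSame} with the ambient faces $s_1=\topi_i$, $s_2=\boti_i$ (so $s_2=s_1+(-e_i)$), tiles $t_1=f[z]$, $t_2=f[z+e_i]$, and subfaces $s_1'=s+e_i\in S_d(\topi_i)$, $s_2'=s_1'+(-e_i)=s\in S_d(\boti_i)$. The required hypothesis $s_1(t_1)=s_2(t_2)$ is exactly the tiling-validity condition $\topi_i(f[z])=\boti_i(f[z+e_i])$, and the lemma's conclusion is the desired equality. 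The case $v=-e_i$ reduces to this by applying the already-proved $v=e_i$ case to the pair $(s-e_i,\,z-e_i)$ in place of $(s,z)$.

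For the inductive step $|v|_1\geq 2$, I would pick any coordinate $i_0$ with $v[i_0]\neq 0$, split $v=v_1+v_2$ with $v_1=v[i_0]e_{i_0}$ and $v_2=v-v_1$, and use the structural observation to check that $s+v_1\in S_d$ (adding $v[i_0]e_{i_0}$ to $p$ keeps $p$ in $\{-1,0\}^d$ and does not affect orthogonality with $u$, since $u[i_0]=0$). Then two applications of the inductive hypothesis chain together as $(s+v)(f[z])=(s+v_1)(f[z+v_2])=s(f[z+v])$: the first uses the pair $(s+v_1,\,v_2)$ at $z$, the second uses $(s,\,v_1)$ at $z+v_2$. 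There is no serious obstacle; the entire content of the theorem sits in the structural observation about when $s$ and $s+v$ can both be faces, after which Lemma~\ref{subfaceSame} and the definition of $X_n$ do all the work.
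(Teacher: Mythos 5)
Your proof is correct and follows essentially the same route as the paper: decompose $v$ into unit steps, and handle a single step $\pm e_i$ by applying Lemma~\ref{subfaceSame} to the pair $\topi_i$, $\boti_i$ together with the validity condition $\topi_i(f[z])=\boti_i(f[z+e_i])$. Your version is somewhat more explicit than the paper's (the structural observation that $s,s+v\in S_d$ forces $u[i]=0$ wherever $v[i]\neq 0$, the check that the intermediate face $s+v_1$ lies in $S_d$, and the separate treatment of negative steps), but the underlying argument is the same.
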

\begin{proof}
Since $v$ is a sum of vectors in the standard basis, the claim follows by induction if we show that $(s+e_i)(f[z])=s(f[z+e_i])$ for $1\leq i\leq d$ such that $s,s+e_i\in S_d$. We know that $s+e_i\in S_d(\topi_i)$ and $s\in S_d(\boti_i)$. From $f\in X_{n}$ it follows that $\topi_i(f[z])=\boti_i(f[z+e_i])$ and Lemma~\ref{subfaceSame} implies that $(s+e_i)(f[z])=s(f[z+e_i])$.
\end{proof}

The results of this section allow us to speak of values on edges in a tiling without referring to individual cubes. To be more precise, we imagine $\Z^d$ to be the vertex set of an infinite directed graph, the $d$-dimensional grid, having an edge $(z-e_i,z)$ (the edge from $z-e_i$ to $z$) for every $z\in\Z^d$ and $1\leq i\leq d$. When we consider a tiling $f\in X_{n}$ and say that the cube $f[z]$ is at position $z\in\Z^d$, we visualize this as the unit cube $[-1,0]^d$ whose ``top-right'' corner, the point $\cvec{0}$, is positioned at $z$. Then the edges of the cube $f[z]$ overlap with some of the edges of the $d$-dimensional grid: an edge of the form $s=\faceat(p,-e_i)$ overlaps the directed edge $\dedge_z(s)=(z+p-e_i,z+p)$ in the grid. We also define the undirected edge $\edge_z(s)=\{z+p-e_i,z+p\}$ and the (undirected) label of the undirected edge $\edge_z(s)$ in the tessellation by
\[\lbl(f,\edge_z(s))=sf[z].\]
To see that this is well defined, let $z_1,z_2\in\Z^d$ and $s_1=\faceat(p_1,-e_i)$, $s_2=\faceat(p_2,-e_i)$ be such that $\edge_{z_1}(s_1)=\edge_{z_2}(s_2)$. Then in particular $z_1+p_1=z_2+p_2$ and $s_1=s_2+(p_1-p_2)$. Because of this,
\begin{flalign*}
\lbl(f,\edge_{z_1}(s_1))&=s_1 f[z_1]=(s_2+(p_1-p_2))f[z_2-(p_1-p_2)] \\
\overset{T.~\ref{subfaceNeigh}}&{=}s_2 f[z_2]=\lbl(f,\edge_{z_2}(s_2)).
\end{flalign*}

We note that each edge $\{z-e_i,z\}$ has a label, because $\edge_z(\faceat(\cvec{0},-e_i))=\{z-e_i,z\}$. For an example of how a tessellation yields labels on the directed edges, see the left and middle parts of Figure~\ref{tiletogrid} (for the drawn part of the grid, the definition can be applied in the form $\lbl(f,\edge_{\cvec{0}}(s))=sf[\cvec{0}]$).

\subsection{Path integrals over tilings}\label{pathSubSect}

Given a prebasis $n$ of dimension $d$, we assign to every point $v\in\Z^d$ a weight by $\wgt_{n}(v)=m(n,v)$. A path (of length $k-1$) is a sequence $P=(P_i)_{i=1}^k\in(\Z^d)^k$, where for $1\leq i< k$ we have $P_{i+1}-P_i=ae_j$ for some $a\in\{-1,1\}$, $1\leq j\leq d$. For $1\leq	i\leq k-1$, let $E_P(i)=P_{i+1}-P_i$. Given a valid tiling $f\in X_{n}$ and a pair of vectors $p,p'\in\Z^d$ satisfying $p'-p=ae_j$ for some $a\in\{-1,1\}$, $1\leq j\leq d$ we denote $\sign(p'-p)=a$ and
\[(p,p')f=\sign(p'-p)\wgt_{n}(\max\{p,p'\})\lbl(f,\{p,p'\}).\]
Then the path integral of $f$ over $P$ is
\[Pf=\sum_{i=1}^{k-1} (P_{i},P_{i+1})f.\]
See the right part of Figure~\ref{tiletogrid} containing labels given by a tessellation together with the weights of the points. Computing the path integral over a path of length $1$ consists of looking at the corresponding edge, multiplying its (undirected) label with the weight at its end point (now considered as a directed edge), and possibly multiplying the result by $-1$ in the case when the direction of the path opposes the direction of the edge.

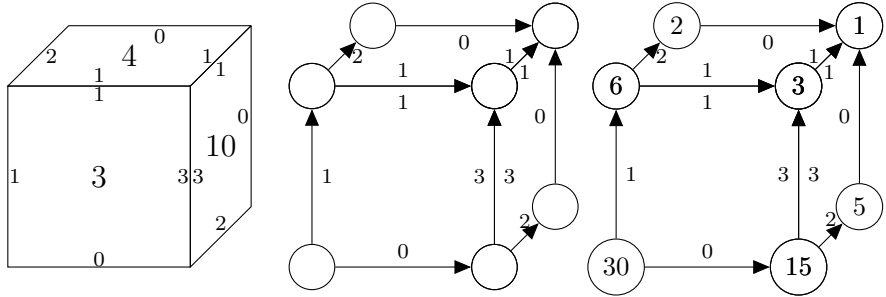
\begin{figure}[ht]
\centering
\begin{tikzpicture}[scale=0.8]

\cubefig[10,2,3,5](0,0);
\cubegrid[10,2,3,5](5,0);
\cubegridwt[10,2,3,5,1](10,0);

\end{tikzpicture}
\caption{Left: A tessellation with $\cube_{(2,3,5)}(10)$ positioned at the origin in $\Z^3$. \\
Middle: $\Z^3$ as a directed graph together with labels given by $\cube_{(2,3,5)}(10)$. \\
Right: The weights $\wgt_{(2,3,5)}(v)$ of the points $v\in\Z^3$ have been added to the grid, the point $\cvec{0}$ is at the upper right with $\wgt_{(2,3,5)}(\cvec{0})=1$.}
\label{tiletogrid}
\end{figure}

For $v\in\Z^d$ we denote $P+v=(P_i+v)_{i=1}^k$. We prove an equality connecting the integral of a tessellation $f$ over a shifted path $P+v$ to the integral of a shifted tessellation $\sigma_v(f)$ over a path $P$.

\begin{lemma}\label{pathshift}
For a path $P$, $f\in X_{n}$ and $v\in\Z^d$ it holds that $(P+v)f=\wgt_{n}(v)P\sigma_v(f)$.
\end{lemma}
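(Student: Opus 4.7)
My plan is to reduce the statement to a single-edge version and then verify it factor by factor.

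First I would observe that both sides of the equality are additive in the path: $(P+v)f = \sum_{i=1}^{k-1}(P_i+v,P_{i+1}+v)f$ and $\wgt_n(v)P\sigma_v(f) = \wgt_n(v)\sum_{i=1}^{k-1}(P_i,P_{i+1})\sigma_v(f)$. Hence it suffices to prove the one-edge identity
\[(p+v,p'+v)f = \wgt_n(v)\,(p,p')\sigma_v(f)\]
for any pair $p,p'\in\Z^d$ with $p'-p = \pm e_j$.

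Next I would expand both sides using the definition $(p,p')f = \sign(p'-p)\,\wgt_n(\max\{p,p'\})\,\lbl(f,\{p,p'\})$ and compare the three factors. The sign factor $\sign(p'-p)$ is unchanged by translation, so the $\sign$ contributions match immediately. The weight factor behaves well under the translation because $m(n,w+v) = \prod_j n[j]^{-(w+v)[j]} = m(n,w)\cdot m(n,v)$, so $\wgt_n(\max\{p+v,p'+v\}) = \wgt_n(\max\{p,p'\}+v) = \wgt_n(\max\{p,p'\})\,\wgt_n(v)$, accounting precisely for the extra factor $\wgt_n(v)$ on the right-hand side.

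The remaining task is to check $\lbl(f,\{p+v,p'+v\}) = \lbl(\sigma_v(f),\{p,p'\})$. Writing $z = \max\{p,p'\}$ and $s = \faceat(\cvec{0},-e_j)$, we have $\edge_z(s) = \{p,p'\}$ and $\edge_{z+v}(s) = \{p+v,p'+v\}$, so by the well-definedness established right after Theorem~\ref{subfaceNeigh},
\[\lbl(f,\{p+v,p'+v\}) = s\,f[z+v] = s\,\sigma_v(f)[z] = \lbl(\sigma_v(f),\{p,p'\}).\]
This finishes the one-edge identity, and summing over $i$ gives the lemma.

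I do not expect any real obstacle here; the only thing to be careful about is to make sure the label identity is justified by the well-definedness of $\lbl$ on undirected edges rather than by the tile-level matching conditions (since $f$ and $\sigma_v(f)$ both lie in $X_n$, but we only need to read off a single edge label, not invoke Theorem~\ref{subfaceNeigh}). Everything else is purely the multiplicativity of $\wgt_n$ and equivariance of the labelling under translation.
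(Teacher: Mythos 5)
Your proposal is correct and follows essentially the same route as the paper: the paper also proves the lemma by a direct term-by-term computation over the edges of $P$, using translation invariance of the sign, the multiplicativity $\wgt_n(\max\{P_i+v,P_{i+1}+v\})=\wgt_n(v)\wgt_n(\max\{P_i,P_{i+1}\})$, and the label identity $\lbl(f,\{P_i+v,P_{i+1}+v\})=\lbl(\sigma_v(f),\{P_i,P_{i+1}\})$. Your only addition is to spell out the last identity via $\lbl(f,\edge_{z+v}(s))=sf[z+v]=s\sigma_v(f)[z]$, which the paper leaves implicit.
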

\begin{proof}
We denote $a_i=\sign((P_{i+1}+v)-(P_i+v))=\sign(P_{i+1}-P_i)$ and directly compute that
\begin{flalign*}
(P+v)f&=\sum_{i=1}^{k-1}a_i\wgt_{n}(\max\{P_i+v,P_{i+1}+v\})\lbl(f,\{P_i+v,P_{i+1}+v\})\\
&=\sum_{i=1}^{k-1}a_i\wgt_{n}(v)\wgt_{n}(\max\{P_i,P_{i+1}\})\lbl(\sigma_v(f),\{P_i,P_{i+1}\}) \\
&=\wgt(v)P\sigma_v(f).
\end{flalign*}
\end{proof}

Next we note a simple cancellation property.

\begin{lemma}\label{cancel}
Let $f\in X_{n}$ and let $p,p'\in\Z^d$ satisfy $p'-p=ae_i$ for some $a\in\{-1,1\}$, $1\leq i\leq d$. Then $(p,p')f+(p',p)f=0$.
\end{lemma}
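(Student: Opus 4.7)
The plan is to unwind the definition of the one-step path integral and observe that swapping the two endpoints flips exactly one of the three factors (the sign), while leaving the other two invariant.

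More concretely, I would start by noting that the set $\{p,p'\}$ appearing inside the weight and the label is unordered, so
\[
\max\{p,p'\}=\max\{p',p\}
\quad\text{and}\quad
\lbl(f,\{p,p'\})=\lbl(f,\{p',p\}).
\]
Hence $\wgt_{n}(\max\{p,p'\})=\wgt_{n}(\max\{p',p\})$ as well. The only place where the ordering matters is the sign prefactor. Since $p'-p=ae_i$ with $a\in\{-1,1\}$, we have $p-p'=-ae_i$, so
\[
\sign(p'-p)=a=-(-a)=-\sign(p-p').
\]

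Substituting into the definition gives
\begin{flalign*}
(p,p')f+(p',p)f
&= \sign(p'-p)\,\wgt_{n}(\max\{p,p'\})\,\lbl(f,\{p,p'\}) \\
&\phantom{=}\; + \sign(p-p')\,\wgt_{n}(\max\{p',p\})\,\lbl(f,\{p',p\}) \\
&= \bigl(\sign(p'-p)+\sign(p-p')\bigr)\,\wgt_{n}(\max\{p,p'\})\,\lbl(f,\{p,p'\}) \\
&= 0.
\end{flalign*}

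This is essentially a one-line computation from the definition, so there is no real obstacle; the only thing to be careful about is to check that the three ingredients $\max$, $\wgt_{n}$, and $\lbl$ genuinely only depend on the unordered pair $\{p,p'\}$ (and on $f$), which is immediate. The hypothesis $f\in X_n$ is not needed beyond ensuring that $\lbl(f,\{p,p'\})$ is well defined, as established in the discussion following Theorem~\ref{subfaceNeigh}.
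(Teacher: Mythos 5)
Your argument is correct and is essentially the same as the paper's proof: both simply expand the definition of $(p,p')f$ and $(p',p)f$, note that the weight and the undirected label depend only on the unordered pair $\{p,p'\}$, and cancel via the opposite signs $\sign(p'-p)=-\sign(p-p')$. No issues.
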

\begin{proof}
We may compute
\begin{flalign*}
&(p,p')f+(p',p)f \\
&=a\wgt_{n}(\max\{p,p'\})\lbl(f,\{p,p'\})-a\wgt_{n}(\max\{p',p\})\lbl(f,\{p',p\})=0.
\end{flalign*}
\end{proof}

In the following lemma we will show that the path integral over two consecutive edges around a square is equal to the path integral over the other two consecutive edges of the same square, see Figure~\ref{squarepath}. This may also be tested on the concrete examples of labels and weights in Figure~\ref{tiletogrid}.

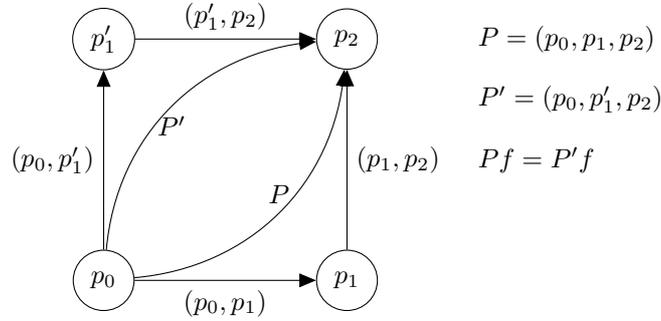
\begin{figure}[ht]
\centering
\begin{tikzpicture}[scale=0.8]

\node(sw) at (-4,-4) [shape=circle,draw,minimum size=8mm] {$p_0$};
\node(se) at (0,-4) [shape=circle,draw,minimum size=8mm] {$p_1$};
\node(ne) at (0,0) [shape=circle,draw,minimum size=8mm] {$p_2$};
\node(nw) at (-4,0) [shape=circle,draw,minimum size=8mm] {$p_1'$};

\draw[-{triangle 45}] (sw) to [below] node {$(p_0,p_1)$} (se);
\draw[-{triangle 45}] (se) to [right] node {$(p_1,p_2)$} (ne);
\draw[-{triangle 45}] (sw) to [left] node {$(p_0,p_1')$} (nw);
\draw[-{triangle 45}] (nw) to [above] node {$(p_1',p_2)$} (ne);

\draw[-{triangle 45}] (sw) to [bend left = 40, below] node {$P'$} (ne);
\draw[-{triangle 45}] (sw) to [bend right = 40, above] node {$P$} (ne);

\node at (2,0) [anchor=west] {$P=(p_0,p_1,p_2)$};
\node at (2,-1) [anchor=west] {$P'=(p_0,p_1',p_2)$};
\node at (2,-2) [anchor=west] {$Pf=P'f$};

\end{tikzpicture}
\caption{Let $P$, $P'$ be the two paths between two opposite corners of a square. The path integrals of a tessellation $f$ over $P$ and $P'$ are equal.}
\label{squarepath}
\end{figure}

\begin{lemma}\label{switch}
Let $f\in X_{n}$ and let $p_0,p_1,p_2\in\Z^d$ satisfy $p_1-p_0=a_1e_{i_1}$ and $p_2-p_1=a_2e_{i_2}$ for some $a_1,a_2\in\{-1,1\}$ and $1\leq i_1\neq i_2\leq d$. If $p_1'=p_0+(p_2-p_1)$, then $(p_0,p_1')f$ and $(p_1',p_2)f$ are defined and $(p_0,p_1)f+(p_1,p_2)f=(p_0,p_1')f+(p_1',p_2)f$.
\end{lemma}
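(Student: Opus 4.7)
The plan is to reformulate the desired identity as the vanishing of the path integral around a $4$-cycle. Since $p_1' - p_0 = p_2 - p_1 = a_2 e_{i_2}$ and $p_2 - p_1' = p_1 - p_0 = a_1 e_{i_1}$, both $(p_0, p_1')f$ and $(p_1', p_2)f$ are well-defined; applying Lemma~\ref{cancel} to flip the directions of the two right-hand terms, the equality becomes
\[(p_0, p_1)f + (p_1, p_2)f + (p_2, p_1')f + (p_1', p_0)f = 0,\]
which states that the path integral of $f$ around the closed square $(p_0, p_1, p_2, p_1', p_0)$ is zero.

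Next I reduce to a single multiplication cube. Let $z \in \Z^d$ be the componentwise maximum of $\{p_0, p_1, p_2, p_1'\}$; since these four points agree outside coordinates $i_1, i_2$ and differ in those coordinates by at most $1$, each corner satisfies $p_j - z \in \{\cvec{0}, -e_{i_1}, -e_{i_2}, -e_{i_1} - e_{i_2}\}$. By Lemma~\ref{pathshift}, the cycle integral in $f$ equals $\wgt_{n}(z)$ times the integral of $\sigma_z(f)$ over the shifted cycle, so it suffices to show the latter vanishes. Setting $t = f[z] = \sigma_z(f)[\cvec{0}]$, Theorem~\ref{subfaceNeigh} together with the definition of the edge labeling identifies the four edge labels of the shifted square with the length-$1$ face labels $t[\cvec{0}, -e_{i_1}]$, $t[\cvec{0}, -e_{i_2}]$, $t[-e_{i_1}, -e_{i_1} - e_{i_2}]$, $t[-e_{i_2}, -e_{i_1} - e_{i_2}]$ of $t$ itself.

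A direct expansion of the four edge contributions, each of the form $(\pm 1) \cdot \wgt_{n}(\max\{\cdot, \cdot\}) \cdot \mathrm{label}$, collapses the integral to an expression that is, up to a global sign, equal to
\[\bigl(t[\cvec{0}, -e_{i_1}] + n[i_1]\, t[-e_{i_1}, -e_{i_1} - e_{i_2}]\bigr) - \bigl(t[\cvec{0}, -e_{i_2}] + n[i_2]\, t[-e_{i_2}, -e_{i_1} - e_{i_2}]\bigr).\]
Applying Lemma~\ref{cubePath} to the directive sequences $(\cvec{0}, -e_{i_1}, -e_{i_1} - e_{i_2})$ and $(\cvec{0}, -e_{i_2}, -e_{i_1} - e_{i_2})$ shows that both parenthesized quantities equal $t[\cvec{0}, -e_{i_1} - e_{i_2}]$, so their difference, and hence the $4$-cycle integral, is zero.

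The main obstacle is handling the four sign choices $(a_1, a_2) \in \{-1, 1\}^2$ without a tedious case split. The reformulation as a cycle integral, combined with the shift by the componentwise maximum $z$, neutralizes the sign dependence: in every case the underlying set of edges in the reduced cycle is the same, and only the traversal orientation (and hence an overall sign of the resulting expression) depends on $(a_1, a_2)$, which is immaterial when the conclusion is that the integral vanishes.
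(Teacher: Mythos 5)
Your proof is correct. The computational core is the same as the paper's: read all four edge labels off the single cube sitting at the componentwise maximum corner and apply Lemma~\ref{cubePath} twice, to the binary directive sequences $(\cvec{0},-e_{i_1},-e_{i_1}-e_{i_2})$ and $(\cvec{0},-e_{i_2},-e_{i_1}-e_{i_2})$. Where you genuinely differ is in the handling of the sign choices $(a_1,a_2)$, which occupies roughly half of the paper's proof: the paper reduces the three remaining sign cases to $a_1=a_2=1$ by explicit relabelings of the corners combined with Lemma~\ref{cancel}, and then works with $t=f[p_2]$ directly, dividing the weights $\wgt_n(p_2)$ out by hand. You instead recast the identity, via Lemma~\ref{cancel}, as the vanishing of the path integral around the $4$-cycle $(p_0,p_1,p_2,p_1',p_0)$, normalize to the origin with Lemma~\ref{pathshift} (legitimate, since $\wgt_n(z)>0$), and observe that the four sign cases only change the starting corner and traversal orientation of the same square, hence at most an overall sign of the integral --- immaterial when the claim is that it vanishes. (Strictly, the starting corner varies too, but a cyclic reindexing of the summands does not change a cycle integral, so your dismissal is justified.) The paper's version is completely explicit and needs nothing beyond the two preceding lemmas; yours trades the case analysis for the small structural observations about cycle integrals and the shift, which makes the argument shorter and arguably more transparent, and it does not create circularity since Theorem~\ref{indPath} is never invoked.
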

\begin{proof}
We first note that $p_1'-p_0=p_2-p_1=a_2e_{i_2}$ and $p_2-p_1'=p_1-p_0=a_1e_{i_1}$. Therefore $(p_0,p_1')f$ and $(p_1'+p_2)f$ are defined. To prove the lemma it is sufficient to consider the case $a_1=a_2=1$ (the case in Figure~\ref{squarepath}). To see this, we show how to reduce the other cases to this one.

Case 1: $a_1=1, a_2=-1$. Let $q_0=p_1'$, $q_1=p_2$, $q_2=p_1$ and $q_1'=q_0+(q_2-q_1)=p_1'+(p_1-p_2)=p_0$. Then $q_1-q_0=p_2-p_1'=a_1e_{i_1}$, $q_2-q_1=p_1-p_2=(-a_2)e_{i_2}$ and
\begin{flalign*}
&(p_0,p_1)f+(p_1,p_2)f=(p_0,p_1')f+(p_1',p_2)f \\
\iff& (q_1',q_2)f+(q_2,q_1)f=(q_1',q_0)f+(q_0,q_1)f \\
\overset{L.~\ref{cancel}}{\iff}&(q_0,q_1')f+(q_1',q_2)f=(q_0,q_1)f+(q_1,q_2)f.
\end{flalign*}

Case 2: $a_1=-1,a_2=1$. Let $q_0=p_1$, $q_1=p_0$, $q_2=p_1'$ and $q_1'=q_0+(q_2-q_1)=p_1+(p_1'-p_0)=p_2$. Then $q_1-q_0=p_0-p_1=(-a_1)e_{i_1}$, $q_2-q_1=p_1'-p_0=a_2e_{i_2}$ and
\begin{flalign*}
&(p_0,p_1)f+(p_1,p_2)f=(p_0,p_1')f+(p_1',p_2)f \\
\iff& (q_1,q_0)f+(q_0,q_1')f=(q_1,q_2)f+(q_2,q_1')f \\
\overset{L.~\ref{cancel}}{\iff}&(q_0,q_1')f+(q_1',q_2)f=(q_0,q_1)f+(q_1,q_2)f.
\end{flalign*}

Case 3: $a_1=a_2=-1$. Let $q_0=p_2$, $q_1=p_1'$, $q_2=p_0$ and $q_1'=q_0+(q_2-q_1)=p_2+(p_0-p_1')=p_1$. Then $q_1-q_0=p_1'-p_2=(-a_1)e_{i_1}$, $q_2-q_1=p_0-p_1'=(-a_2)e_{i_2}$ and
\begin{flalign*}
&(p_0,p_1)f+(p_1,p_2)f=(p_0,p_1')f+(p_1',p_2)f \\
\iff& (q_2,q_1')f+(q_1',q_0)f=(q_2,q_1)f+(q_1,q_0)f \\
\overset{L.~\ref{cancel}}{\iff}&(q_0,q_1)f+(q_1,q_2)f=(q_0,q_1')f+(q_1',q_2)f.
\end{flalign*}

We will therefore assume that $a_1=a_2=1$ and thus $p_0<p_1<p_2$. We will now represent all terms in the claimed equality in terms of a single cube $t=f[p_2]$ in the tessellation $f$.

\begin{flalign*}
&(p_0,p_1)f/\wgt_{n}(p_2)=\wgt_{n}(p_1-p_2)\lbl(f,E_{p_2}(\faceat(-e_{i_2},-e_{i_1}))) \\
&=n[i_2]\faceat(-e_{i_2},-e_{i_1})t=n[i_2]t[-e_{i_2},-e_{i_2}-e_{i_1}] \\
&(p_1,p_2)f/\wgt_{n}(p_2)=\wgt_{n}(p_2-p_2)\lbl(f,E_{p_2}(\faceat(\cvec{0},-e_{i_2}))) \\
&=\faceat(\cvec{0},-e_{i_2})t=t[\cvec{0},-e_{i_2}] \\
&(p_0,p_1')f/\wgt_{n}(p_2)=\wgt_{n}(p_1'-p_2)\lbl(f,E_{p_2}(\faceat(-e_{i_1},-e_{i_2})))\\
&=n[i_1]\faceat(-e_{i_1},-e_{i_2})t=n[i_1]t[-e_{i_1},-e_{i_1}-e_{i_2}] \\
&(p_1',p_2)f/\wgt_{n}(p_2)=\wgt_{n}(p_2-p_2)\lbl(f,E_{p_2}(\faceat(\cvec{0},-e_{i_1})))\\
&=\faceat(0,-e_{i_1})t=t[\cvec{0},-e_1]
\end{flalign*}

From this we see that

\begin{flalign*}
&((p_0,p_1)f+(p_1,p_2)f)/\wgt_{n}(p_2) \\
&=n[i_2]t[-e_{i_2},-e_{i_2}-e_{i_1}]+t[\cvec{0},-e_{i_2}]\overset{L.~\ref{cubePath}}{=}t[\cvec{0},-e_{i_1}-e_{i_2}]\mbox{ and}\\
&((p_0,p_1')f+(p_1',p_2)f)/\wgt_{n}(p_2) \\
&=n[i_1]t[-e_{i_1},-e_{i_1}-e_{i_2}]+t[\cvec{0},-e_{i_1}]\overset{L.~\ref{cubePath}}{=}t[\cvec{0},-e_{i_1}-e_{i_2}].
\end{flalign*}
\end{proof}

For a path $P=(P_i)_{i=1}^k$ let $E_P(i)=a_i e_{m(i)}$ for $1\leq i<k$. We say that an integer $i$ with $1\leq i<k-1$ is a \emph{return} if $m(i)=m(i+1)$ and $a_i\neq a_{i+1}$. We say that $i$ is an \emph{inversion} if $m(i)>m(i+i)$. The abelianization of $P$ is the unique path $\abel(P)=(P_i')_{i=1}^{k'}$ with $E_{P'}(i)=a_i' e_{m'(i)}$ for $1\leq i<k'$ that satisfies

\begin{itemize}
\item $P_1'=P_1$ and $P_k'=P_k$ ($P'$ and $P$ have the same start and end points)
\item if $1\leq i<k'-1$, then $m'(i)\leq m'(i+1)$ ($P'$ has no inversions)
\item if $1\leq i<k'-1$ and $m'(i)=m'(i+1)$, then $a_{i}'=a_{i+1}'$ ($P'$ has no returns)
\end{itemize}

We will use the abelianization to show that the integral of a tiling over a path depends only on the choice of the start and end points of the path. For the proof of the following lemma, let $\ell(P)$ be the natural number whose base-$d$ expansion is $m(1)\cdots m(k)$.

\begin{lemma}
For any path $P$ and any valid tiling $f\in X_{n}$ it holds that $Pf=\abel(P)f$.
\end{lemma}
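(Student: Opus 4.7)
The plan is to prove the claim by strong induction on $\ell(P)$, using Lemma~\ref{cancel} to remove returns and Lemma~\ref{switch} to reverse inversions. The key structural observation that makes the induction go through is that the abelianization $\abel(P)$ is determined solely by the endpoints $P_1, P_k$: writing $P_k - P_1 = \sum_{j=1}^d c_j e_j$, the no-inversions and no-returns constraints force $\abel(P)$ to traverse $|c_1|$ edges in direction $\sign(c_1) e_1$, then $|c_2|$ edges in direction $\sign(c_2) e_2$, and so on. In particular, any local modification of $P$ that preserves the endpoints also preserves $\abel(P)$. The base case is when $P$ itself has no inversions and no returns, in which case the same uniqueness argument gives $P = \abel(P)$ and the equality is trivial.

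For the inductive step I would pick the smallest index $i$ with $1 \leq i < k-1$ witnessing either a return or an inversion of $P$. If $i$ is a return then $E_P(i) = -E_P(i+1)$ and $P_{i+2} = P_i$; Lemma~\ref{cancel} gives $(P_i, P_{i+1})f + (P_{i+1}, P_{i+2})f = 0$, so deleting $P_{i+1}$ and $P_{i+2}$ yields a strictly shorter path $P'$ with the same endpoints and $P'f = Pf$, whence $\ell(P') < \ell(P)$. If instead $i$ is an inversion, so $m(i) > m(i+1)$, setting $P'_{i+1} = P_i + (P_{i+2} - P_{i+1})$ and replacing $P_{i+1}$ by $P'_{i+1}$ yields a path $P'$ of the same length with $P'f = Pf$ by Lemma~\ref{switch}; the direction-index sequence of $P'$ differs from that of $P$ only by the swap at positions $i$ and $i+1$, which moves the strictly larger digit $m(i)$ from the more significant to the less significant position, so again $\ell(P') < \ell(P)$.

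In either case $P'$ has the same endpoints as $P$, so by the uniqueness observation $\abel(P') = \abel(P)$. The inductive hypothesis applied to $P'$ gives $P'f = \abel(P')f$, and combining with $P'f = Pf$ produces $Pf = \abel(P)f$. I do not expect any serious obstacle beyond arranging the inductive measure correctly and noting the endpoint-only dependence of $\abel$; Lemmas~\ref{cancel} and~\ref{switch} were proved precisely to realize these two moves, so the remaining work is purely combinatorial bookkeeping.
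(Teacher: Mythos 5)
Your proposal is correct and follows essentially the same route as the paper's proof: the same measure $\ell(P)$, the same two local moves (return removal via Lemma~\ref{cancel}, inversion swap via Lemma~\ref{switch}), and the same observation that each move preserves endpoints and hence the abelianization, with the decrease of $\ell$ guaranteeing termination. Recasting the paper's iterative descent as strong induction on $\ell(P)$, and spelling out that $\abel(P)$ is determined by the endpoints (which the paper packages into the uniqueness claim in the definition of $\abel$), are only presentational differences.
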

\begin{proof}
We will manipulate any given path $P\neq\abel(P)$ into another path $P'$ which satisfies $\ell(P')<\ell(P)$. We also make sure that $P'$ has the same start and end points as $P$ (so $\abel(P')=\abel(P)$) and that $P'f=Pf$. Since $\ell(P)$ is a natural number, this argument can be repeated only finitely many times and it will eventually yield the path $\abel(P)$. Assume therefore that $P\neq\abel(P)$ and let $E_P(i)=a_i e_{m(i)}$ for $1\leq i<k$. There are two possibilities.

Case 1, a return: $m(i)=m(i+1)$ and $a_i\neq a_{i+1}$ for some $1\leq i<k-1$. Let $P'=(P_1,\dots,P_i,P_{i+3},\dots,P_k)$. Then, using the fact that $P_{i+2}=P_i$, we see from Lemma~\ref{cancel} that $(P_i,P_{i+1})f+(P_{i+1},P_{i+2})f=0$ and therefore

\begin{flalign*}
Pf=&\sum_{j=1}^{i-1}(P_j,P_{j+1})f+(P_{i+2},P_{i+3})f+\sum_{j=i+3}^{k-1}(P_j,P_{j+1})f \\
=&\sum_{j=1}^{i-1}(P_j,P_{j+1})f+(P_i,P_{i+3})f+\sum_{j=i+3}^{k-1}(P_j,P_{j+1})=P'f.
\end{flalign*}

The path $P'$ has the same start and end points as $P$ and it is shorter than $P$, so $\ell(P')<\ell(P)$.

Case 2, an inversion: $m(i)>m(i+1)$ for some $1\leq i<k-1$. Let $P_{i+1}'=P_i+(P_{i+2}-P_{i+1})$ and let $P'=(P_1,\dots,P_{i},P_{i+1}',P_{i+2},\dots,P_k)$. It follows from Lemma~\ref{switch} that $(P_i,P_{i+1})f+(P_{i+1},P_{i+2})f=(P_i,P_{i+1}')f=(P_{i+1}',P_{i+2})f$ and therefore $Pf=P'f$. Letting $E_{P'}(j)=b_j e_{m'(i)}$ for $1\leq j<k$, we check that
\[b_i e_{m'(i)}=P_{i+1}'-P_i=P_{i+2}-P_{i+1}=a_{i+1}e_{m(i+1)}\]
and thus $m'(i)=m(i+1)<m(i)$, so $\ell(P')<\ell(P)$.
\end{proof}

\begin{theorem}\label{indPath}
If two paths $P$, $P'$ have the same start and end points and $f\in X_{n}$, then $Pf=P'f$. In particular, if $P$ is a cycle, then $Pf=0$.
\end{theorem}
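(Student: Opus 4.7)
The plan is to derive both claims immediately from the preceding lemma, which asserts $Pf = \abel(P)f$ for any path $P$ and any $f \in X_n$. The one extra observation needed is that the abelianization $\abel(P)$ depends on $P$ only through its two endpoints $P_1$ and $P_k$. This is transparent from the definition: the three conditions characterizing $\abel(P)$ — agreement of endpoints, absence of inversions, and absence of returns — refer to $P$ only via $P_1$ and $P_k$, and uniqueness of such a path is built into the definition itself. (Explicitly, if $P_k - P_1 = \sum_{j=1}^d c_j e_j$, then $\abel(P)$ is forced to traverse $|c_j|$ consecutive steps of sign $\sign(c_j)$ in the $j$-th coordinate direction, with the blocks arranged in order of increasing $j$; the ``no inversions'' and ``no returns'' conditions leave no other choice.)

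Given this, the first statement is immediate: if $P$ and $P'$ share start and end points, then $\abel(P) = \abel(P')$, so
\[
Pf \;=\; \abel(P)f \;=\; \abel(P')f \;=\; P'f.
\]
For the second statement, if $P$ is a cycle, then $P_1 = P_k$, and the trivial length-zero path $(P_1)$ already satisfies the three defining properties; by uniqueness $\abel(P) = (P_1)$, whose integral is the empty sum $0$, so $Pf = \abel(P)f = 0$.

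There is no real obstacle at this stage. All the combinatorial work — cancelling returns via Lemma~\ref{cancel} and exchanging inversions via Lemma~\ref{switch}, while keeping track of $\ell(P)$ to guarantee termination — was absorbed into the abelianization lemma, leaving Theorem~\ref{indPath} as a short observation about uniqueness of the abelianization in terms of the endpoints.
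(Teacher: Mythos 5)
Your proposal is correct and is essentially the paper's own argument: the paper also deduces the theorem directly from the preceding lemma via $\abel(P)=\abel(P')$, and your extra remarks (why the abelianization depends only on the endpoints, and why a cycle abelianizes to the trivial path with empty-sum integral $0$) just make explicit what the paper leaves implicit.
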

\begin{proof}
This follows from the previous lemma, because $\abel(P)=\abel(P')$.
\end{proof}

\begin{remark}
The inspiration for the terminology of ``path integral'' comes from line integrals of holomorphic functions, and Theorem~\ref{indPath} is analogous to the fact that the line integral of a holomorphic function over a closed curve is equal to zero. In the case of meromorphic functions, if the line integral over a closed curve is not zero, its interior necessarily contains a non-removable singularity. Similarly, if we are given only a partial tiling of the plane and the path integral around a non-tiled part is not zero, then the interior of the path cannot be completed into a valid tiling: see Figure~\ref{path} for an example.

The situation here is reminiscent of the tiling group approach of~\cite{Thu90} for determining whether a collection of polygonal tiles can tile a finite region of the plane. In that approach, the tiles can be interpreted as describing relators in a group, and if the group element describing the boundary of the region is not the identity element, then tiling the region is not possible.
\end{remark}

\begin{figure}[ht]
\centering
\begin{tikzpicture}[scale=1]

\tile[0,2,5](-3,0);\tile[0,2,5](-2,0);\tile[0,2,5](-1,0);\tile[0,2,5](0,0);
\tile[0,2,5](-3,-1);                                      \tile[2,2,5](0,-1);
\tile[0,2,5](-3,-2);                                      \tile[0,2,5](0,-2);
\tile[0,2,5](-3,-3);\tile[0,2,5](-2,-3);\tile[0,2,5](-1,-3);\tile[0,2,5](0,-3);

\fill   (-1,-1) circle (1mm);
\node at (-1.25,-1.25) {$\cvec{0}$};
\node at (-2,-2) {$P$};
\draw[-{triangle 45}] (-2,-2) ++ (50: 0.8) arc[radius=0.8,start angle=50,delta angle=350];

\node at (-7,-4.5) [anchor=west] {$P=(\cvec{0},-e_1,-2e_1,-2e_1-e_2,-2e_1-2e_2,-e_1-2e_2,-2e_2,-e_2,\cvec{0})$};
\node at (-7,-5.1) [anchor=west] {$Pf=0+0+0+0+0+0+0+1=1$};

\end{tikzpicture}
\caption{A partial valid tiling $f$ using $T_{(2,5)}$. There is no way to complete the tiling, because the path integral around the non-tiled part is not zero.}
\label{path}
\end{figure}

Now, given a tessellation $f\in X_{n}$ and a pair of vectors $p,p'\in\Z^d$, define
\[(p,p')f=Pf\]
for any path $P$ going from $p$ to $p'$: by Theorem~\ref{indPath} the choice of the path $P$ does not matter. For the directed pair $(p,p')$ we also define the (directed) label
\[\lbl(f,(p,p'))=(p,p')f/\wgt_{n}(p').\]
The directed and undirected labels agree in the sense that $\lbl(f,(z-e_i,z))=\lbl(f,\{z-e_i,z\})$ for $z\in\Z^d$ and $1\leq i\leq d$. As one would hope, the labels do not change when the tessellation is shifted.

\begin{lemma}\label{shiftlabel}
Let $f\in X_{n}$ and $p,p'\in\Z^d$. For any $v\in\Z^d$ it holds that 
\[\lbl(f,(p+v,p'+v))=\lbl(\sigma_v(f),(p,p')).\]
\end{lemma}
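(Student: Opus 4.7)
The plan is to reduce the claim directly to Lemma~\ref{pathshift} together with the multiplicativity of $\wgt_n$.

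First, I would pick any path $P = (P_i)_{i=1}^k$ from $p$ to $p'$; such a path exists since $\Z^d$ is connected in the grid graph. Then $P + v$ is a path from $p+v$ to $p'+v$, so by definition of the integral on directed pairs (via Theorem~\ref{indPath}) we have
\[
(p+v, p'+v)f = (P+v)f \quad\text{and}\quad (p,p')\sigma_v(f) = P\sigma_v(f).
\]
Applying Lemma~\ref{pathshift} to $P+v$ gives $(P+v)f = \wgt_n(v)\, P\sigma_v(f)$, and therefore
\[
(p+v,\, p'+v)f = \wgt_n(v)\, (p,p')\sigma_v(f).
\]

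Next I would unpack the weight normalization. From $\wgt_n(u) = m(n,u) = \prod_j n[j]^{-u[j]}$ it is immediate that $\wgt_n$ is additive-to-multiplicative, i.e.\ $\wgt_n(p'+v) = \wgt_n(p')\,\wgt_n(v)$. Dividing the previous displayed equation by $\wgt_n(p'+v)$ and using the definition $\lbl(f,(q,q')) = (q,q')f/\wgt_n(q')$ yields
\[
\lbl(f,(p+v,\,p'+v)) = \frac{\wgt_n(v)\, (p,p')\sigma_v(f)}{\wgt_n(p')\,\wgt_n(v)} = \frac{(p,p')\sigma_v(f)}{\wgt_n(p')} = \lbl(\sigma_v(f),(p,p')),
\]
which is the claim.

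There is no real obstacle here: the lemma is essentially a bookkeeping consequence of Lemma~\ref{pathshift}, once one notes that the weight $\wgt_n$ is a group homomorphism from $(\Z^d,+)$ to $(\Q_{>0},\cdot)$, so the factor $\wgt_n(v)$ produced by Lemma~\ref{pathshift} cancels exactly with the shift of the normalizing weight from $\wgt_n(p')$ to $\wgt_n(p'+v)$. The only thing worth being careful about is that the definition of $(p,p')f$ is independent of the chosen path, which is precisely the content of Theorem~\ref{indPath} and so justifies passing freely between $P$ and $P+v$.
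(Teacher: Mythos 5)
Your proof is correct and follows the same route as the paper: unwind the definition of $\lbl$, apply Lemma~\ref{pathshift} to a path realizing the pair (justified by Theorem~\ref{indPath}), and cancel $\wgt_n(v)$ using the multiplicativity $\wgt_n(p'+v)=\wgt_n(p')\wgt_n(v)$. The paper's proof is just a more compressed version of the same computation.
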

\begin{proof}
\begin{flalign*}
&\lbl(f,(p+v,p'+v))=(p+v,p'+v)f/\wgt_{n}(p'+v) \\
\overset{L.~\ref{pathshift}}&{=}\wgt_{n}(v)(p,p')\sigma_v(f)/\wgt_{n}(p'+v)=(p,p')\sigma_v(f)/\wgt_{n}(p') \\
&=\lbl(\sigma_v(f),(p,p')).
\end{flalign*}
\end{proof}

The labeling map inherits a summation property from the summation property of paths. The next lemma associates to any label in a tiling a mixed base representation of sorts, where the digits also come from labels in the tiling. In a special case the base consists of powers of a single number $m(n,v)\in\Q$, which is however not necessarily an integer.

\begin{lemma}\label{pathsum}
Let $p_0,p_1,\dots,p_k\in\Z^d$ and $f\in X_{n}$. Then
\[\lbl(f,(p_k,p_0))=\sum_{i=0}^{k-1} m(n,p_{i}-p_0)\lbl(f,(p_{i+1},p_{i})).\]
In particular, for $v\in\Z^d$ and $k\in\N$ it holds that
\[\lbl(f,(kv+p_0,p_0))=\sum_{i=0}^{k-1} m(n,v)^i\lbl(f,((i+1)v+p_0,iv+p_0)).\]
\end{lemma}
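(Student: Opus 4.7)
The plan is to reduce the statement to the additivity of path integrals over concatenations of paths, which is essentially built into the definition and already validated (as far as path-independence is concerned) by Theorem~\ref{indPath}. The special case is then just an instance of the general formula once we observe that $m(n,\cdot)$ turns integer multiples of $v$ into powers of $m(n,v)$.

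More concretely, I would start by recalling the definitional identity $\lbl(f,(p,p')) = (p,p')f / \wgt_n(p')$ and the observation that $(p,p')f$ was defined via any path from $p$ to $p'$. For each $i$ with $0 \leq i \leq k-1$, pick any path $P^{(i)}$ from $p_{i+1}$ to $p_i$. The concatenation $P = P^{(k-1)} \cdot P^{(k-2)} \cdots P^{(0)}$ is then a valid path from $p_k$ to $p_0$, and since the path integral is by construction a finite sum of contributions $(P_j,P_{j+1})f$ over consecutive edges, we have the immediate additivity
\[
(p_k,p_0)f \;=\; Pf \;=\; \sum_{i=0}^{k-1} P^{(i)}f \;=\; \sum_{i=0}^{k-1} (p_{i+1},p_i)f.
\]
Here Theorem~\ref{indPath} justifies that each $P^{(i)}f$ depends only on its endpoints, and thus equals $(p_{i+1},p_i)f$ regardless of the choice of $P^{(i)}$.

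Next I would divide through by $\wgt_n(p_0)$ and rewrite each term $(p_{i+1},p_i)f$ as $\lbl(f,(p_{i+1},p_i))\wgt_n(p_i)$, using the definition of the directed label. The factor $\wgt_n(p_i)/\wgt_n(p_0)$ simplifies directly from $\wgt_n(v) = m(n,v) = \prod_j n[j]^{-v[j]}$:
\[
\frac{\wgt_n(p_i)}{\wgt_n(p_0)} \;=\; \prod_{j=1}^d n[j]^{-(p_i-p_0)[j]} \;=\; m(n,p_i - p_0),
\]
which yields the first claimed identity.

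For the ``in particular'' part, apply the formula to $p_i = iv + p_0$. Then $p_i - p_0 = iv$, and
\[
m(n,iv) \;=\; \prod_{j=1}^d n[j]^{-iv[j]} \;=\; \left(\prod_{j=1}^d n[j]^{-v[j]}\right)^{\! i} \;=\; m(n,v)^i,
\]
so the general formula specializes to the stated one. There is no real obstacle here: the one thing to be careful about is not confusing the directed label $\lbl(f,(p,p'))$ with the integer-weighted integral $(p,p')f$, since the conversion factor $\wgt_n(p')$ is what produces the $m(n,p_i-p_0)$ coefficients.
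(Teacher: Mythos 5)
Your proposal is correct and follows essentially the same route as the paper: decompose $(p_k,p_0)f$ into the sum of the $(p_{i+1},p_i)f$ (the paper does this in one line, you spell out the path concatenation and the appeal to Theorem~\ref{indPath}), convert to labels via the weights, simplify $\wgt_n(p_i)/\wgt_n(p_0)=m(n,p_i-p_0)$, and specialize to $p_i=iv+p_0$ using $m(n,iv)=m(n,v)^i$. No gaps.
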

\begin{proof}
\begin{flalign*}
&\lbl(f,(p_k,p_0))=(p_k,p_0)f/\wgt_{n}(p_0)=\sum_{i=0}^{k-1}(p_{i+1},p_{i})f/\wgt_{n}(p_0) \\
&=\sum_{i=0}^{k-1} \wgt_{n}(p_i)\lbl(f,(p_{i+1},p_{i}))/\wgt_{n}(p_0)=\sum_{i=0}^{k-1} \wgt_{n}(p_i-p_0)\lbl(f,(p_{i+1},p_{i}))
\end{flalign*}
For the latter part, choose $p_i=iv+p_0$ and note that
\[m(n,p_i-p_0)=m(n,iv)=\prod_{j=1}^d n[j]^{-iv[j]}=m(n,v)^i.\]
\end{proof}

The base $m(n,v)$ of Lemma~\ref{pathsum} is a natural number at least when $v\leq\cvec{0}$. In that case the ``digits'' $\lbl(f,((i+1)v+p_0,iv+p_0))$ are actually just usual base-$m(n,v)$ digits, which follows from the next lemma.

\begin{lemma}\label{labelbound}
If $p\leq p'\in\Z^d$ and $f\in X_{n}$, then $\lbl(f,(p,p'))\in\N$ and $0\leq\lbl(f,(p,p'))<m(n,p-p')$.
\end{lemma}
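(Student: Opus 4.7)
The strategy is induction on the $L^1$-distance $|p'-p|_1=\sum_j(p'[j]-p[j])$, reducing to the single-edge case and then reassembling via the pathsum formula of Lemma~\ref{pathsum}.

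\textbf{Step 1 (single edge).} I first handle the case $p'=p+e_i$. Here a path of length one suffices, and by the definition of $\lbl$ on directed edges together with Theorem~\ref{subfaceNeigh}, one has $\lbl(f,(p,p'))=s\,f[z]=\base_n(a,(v_1,v_2))[2]$ for some $t=f[z]=\cube_n(a)$ and some $v_1,v_2\in\{-1,0\}^d$ with $v_1-v_2=e_i$. By the definition of mixed base representations this is a nonnegative integer strictly less than $m(n,v_2)/m(n,v_1)=n[i]=m(n,p-p')$, as required.

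\textbf{Step 2 (monotone decomposition).} For general $p\leq p'$, pick any monotone path from $p'$ down to $p$, that is, a sequence $p_0=p',p_1,\dots,p_k=p$ with $p_\ell-p_{\ell+1}=e_{j_\ell}$ for some coordinate index $j_\ell$. Lemma~\ref{pathsum} gives
\[\lbl(f,(p,p'))=\lbl(f,(p_k,p_0))=\sum_{\ell=0}^{k-1}m(n,p_\ell-p_0)\,\lbl(f,(p_{\ell+1},p_\ell)).\]
Each factor $m(n,p_\ell-p_0)$ is a nonnegative power-product of the $n[j]$'s since $p_\ell\leq p_0$, and by Step~1 each single-edge label $\lbl(f,(p_{\ell+1},p_\ell))$ is an integer in $\{0,1,\dots,n[j_\ell]-1\}$. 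Hence $\lbl(f,(p,p'))\in\N$.

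\textbf{Step 3 (telescoping upper bound).} The crucial identity is $m(n,p_{\ell+1}-p_0)=n[j_\ell]\,m(n,p_\ell-p_0)$, which follows from $p_{\ell+1}-p_\ell=-e_{j_\ell}$ and the definition of $m(n,\cdot)$. Therefore
\[\lbl(f,(p,p'))\leq\sum_{\ell=0}^{k-1}m(n,p_\ell-p_0)(n[j_\ell]-1)=\sum_{\ell=0}^{k-1}\bigl(m(n,p_{\ell+1}-p_0)-m(n,p_\ell-p_0)\bigr),\]
which telescopes to $m(n,p_k-p_0)-1=m(n,p-p')-1$, giving the strict inequality $\lbl(f,(p,p'))<m(n,p-p')$.

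\textbf{Expected obstacle.} There is no deep obstacle; the only point requiring a moment's care is that the pathsum formula as stated in Lemma~\ref{pathsum} involves labels on edges traversed in the \emph{decreasing} direction $\lbl(f,(p_{\ell+1},p_\ell))$ rather than $\lbl(f,(p_\ell,p_{\ell+1}))$. Choosing the monotone path from $p'$ down to $p$ (so that the individual edges are oriented positively in the sense of Step~1) aligns these sign conventions and makes the telescoping argument in Step~3 yield precisely the mixed-base bound $m(n,p-p')-1$.
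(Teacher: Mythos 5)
Your proof is correct and follows essentially the same route as the paper: the paper reduces to $p'=\cvec{0}$ via Lemma~\ref{shiftlabel} and runs an induction whose single step is exactly one term of your Lemma~\ref{pathsum} decomposition, with the same single-edge digit bound $<n[j]$ driving the estimate. Your explicit monotone-path expansion plus the telescoping identity $m(n,p_{\ell+1}-p_0)=n[j_\ell]\,m(n,p_\ell-p_0)$ is just the unrolled form of that induction, and it is valid since Lemma~\ref{pathsum} is established independently beforehand.
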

\begin{proof}
By Lemma~\ref{shiftlabel} it is sufficient to show that if $p\leq\cvec{0}$, then $\lbl(f,(p,\cvec{0}))\in\N$ and $\lbl(f,(p,\cvec{0}))<m(n,p)$. The proof is by induction on the distance of $p$ from the origin, the base case $p=\cvec{0}$ is simple. Assume then that the claim holds for some $p\leq\cvec{0}$ and consider the vector $p-e_i$ for some $1\leq i\leq d$. Then
\begin{flalign*}
&\lbl(f,(p-e_i,\cvec{0}))=(p-e_i,\cvec{0})f=(p-e_i,p)f+(p,\cvec{0})f \\
&=\underbrace{\wgt_{n}(p)}_{\in\N}\underbrace{\lbl(f,\{p-e_i,p\})}_{\in\N}+\underbrace{\lbl(f,(p,\cvec{0}))}_{\in\N}<m(n,p)\faceat(\cvec{0},-e_i)f[p]+m(n,p) \\
&=m(n,p)(\base_{n}(f[p],(\cvec{0},-e_i))[2]+1)\leq m(n,p) n[i]=m(n,p-e_i),
\end{flalign*}
where we use the fact that $\base_{n}(f[p],(\cvec{0},-e_i))[2]<m(n,-e_i)/m(n,\cvec{0})=n[i]$.
\end{proof}

\begin{lemma}\label{baselabel}
Let $p_k\leq\cdots\leq p_0$ and $f\in X_{n}$. Then for $1\leq i\leq k'\leq k$,
\[\base_{n}(\lbl(f,(p_k,p_0)),(p_j-p_0)_{j=1}^{k'})[i]=\lbl(f,(p_i,p_{i-1})).\]
\end{lemma}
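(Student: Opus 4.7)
The plan is to first handle the case $k' = k$ by a direct appeal to Lemma~\ref{pathsum} together with uniqueness of mixed base representations, and then reduce the general case $k' \leq k$ to the case just handled via Lemma~\ref{endscope}.

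For the case $k' = k$, Lemma~\ref{pathsum} applied to the decreasing sequence $p_0, p_1, \dots, p_k$ yields
\[\lbl(f,(p_k,p_0)) = \sum_{j=0}^{k-1} m(n, p_j - p_0)\, \lbl(f,(p_{j+1}, p_j)).\]
Writing $m = m(n,(p_j-p_0)_{j=1}^k)$, so that $m[0]=1$ and $m[j] = m(n,p_j-p_0)$ for $1\leq j\leq k$, this display expresses $\lbl(f,(p_k,p_0))$ as $\sum_{j=0}^{k} a_j m[j]$ with $a_j = \lbl(f,(p_{j+1},p_j))$ for $0\leq j<k$ and a trailing $a_k = 0$. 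To identify this as the unique mixed base-$m$ expansion, I need the digit bounds $0 \leq a_j < m[j+1]/m[j]$ for $0\leq j<k$, together with $a_k \geq 0$. The latter is trivial, and the former reduces, via $m[j+1]/m[j] = m(n, p_{j+1} - p_j)$, to the inequality $0 \leq \lbl(f,(p_{j+1},p_j)) < m(n, p_{j+1} - p_j)$, which is precisely Lemma~\ref{labelbound} applied to $p_{j+1} \leq p_j$. Unpacking the indexing convention $\base(a,m)[i] = a_{i-1}$, I obtain $\base_n(\lbl(f,(p_k,p_0)), (p_j-p_0)_{j=1}^k)[i] = \lbl(f,(p_i,p_{i-1}))$ for $1\leq i\leq k$.

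For general $1 \leq k' \leq k$, I apply Lemma~\ref{endscope} to the directive sequences $(v_j) = (p_j - p_0)_{j=1}^k$ and $(w_j) = (p_j - p_0)_{j=1}^{k'}$, taking both of the lemma's indices equal to $i-1$. The hypotheses $v_{i-1} = w_{i-1}$ and $v_i = w_i$ hold trivially because the two sequences literally agree on their common domain (for $i=1$ we use the convention $v_0 = w_0 = \cvec{0}$), so the lemma gives
\[\base_n(\lbl(f,(p_k,p_0)), (p_j-p_0)_{j=1}^{k'})[i] = \base_n(\lbl(f,(p_k,p_0)), (p_j-p_0)_{j=1}^{k})[i]\]
for each $1\leq i\leq k'$, and the claim follows from the previous paragraph.

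There is no genuinely hard step: the argument is an assembly of Lemma~\ref{pathsum} (which rewrites the long-range label as a telescoping weighted sum of unit edge labels), Lemma~\ref{labelbound} (which supplies the digit inequality needed for uniqueness of the mixed base expansion), and Lemma~\ref{endscope} (which makes the result robust under truncating the directive sequence from length $k$ down to $k'$). The only thing to be careful about is the index shift between the internal $a_{i-1}$ notation and the external $\base(\cdot)[i]$ notation, and the role of the implicit trailing zero digit at position $k$ in the $k' = k$ expansion.
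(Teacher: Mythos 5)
Your proof is correct and follows essentially the same route as the paper's: Lemma~\ref{pathsum} combined with the digit bounds from Lemma~\ref{labelbound} identifies the mixed base expansion in the case $k'=k$, and Lemma~\ref{endscope} handles the truncation to general $k'$. You have merely spelled out the digit-bound verification and the trailing-zero-digit bookkeeping more explicitly than the paper does.
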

\begin{proof}
By Lemma~\ref{pathsum}
\begin{flalign*}
\lbl(f,(p_k,p_0))=\sum_{i=0}^{k-1} m(n,p_{i}-p_0)\lbl(f,(p_{i+1},p_{i})).
\end{flalign*}
From this the claim follows for $k'=k$ by the definition of a mixed base representation, because by Lemma~\ref{labelbound} $0\leq\lbl(f,(p_{i+1},p_{i}))<m(n,p_{i+1}-p_{i})=m(n,p_{i+1}-p_0)/m(n,p_{i}-p_0)$ for $1\leq i\leq k$. The claim for general $k'$ follows from the case $k'=k$ by Lemma~\ref{endscope}.
\end{proof}

\begin{lemma}\label{diaglabelvalue}
Let $f\in X_{n}$ and $z\in\Z^d$. Then $\val_{n}(f[z])=\lbl(f,(z-\cvec{1},z))$.
\end{lemma}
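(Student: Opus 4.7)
The plan is to first use Lemma~\ref{shiftlabel} to reduce to the case $z=\cvec{0}$. Since $\val_n(f[z])=\val_n(\sigma_z(f)[\cvec{0}])$ and $\lbl(f,(z-\cvec{1},z))=\lbl(\sigma_z(f),(-\cvec{1},\cvec{0}))$, it suffices to establish $\val_n(f[\cvec{0}])=\lbl(f,(-\cvec{1},\cvec{0}))$. Note also that $\wgt_n(\cvec{0})=m(n,\cvec{0})=1$, so the directed label $\lbl(f,(-\cvec{1},\cvec{0}))$ equals the raw path integral $(-\cvec{1},\cvec{0})f$.

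To evaluate this path integral I would choose the canonical diagonal path along the cube at $\cvec{0}$: set $p_i=-\cvec{1}+\sum_{j=1}^{i}e_j$ for $0\leq i\leq d$, so $p_0=-\cvec{1}$ and $p_d=\cvec{0}$, and take $P=(p_0,p_1,\ldots,p_d)$. By Theorem~\ref{indPath} the choice of path does not matter. Each consecutive pair satisfies $p_{i+1}-p_i=e_{i+1}$, so $(p_i,p_{i+1})f=\wgt_n(p_{i+1})\lbl(f,\{p_i,p_{i+1}\})$. The undirected edge $\{p_i,p_{i+1}\}$ coincides with $\edge_{\cvec{0}}(\faceat(p_{i+1},-e_{i+1}))$, hence its label (as computed from the cube $f[\cvec{0}]$) is $f[\cvec{0}][p_{i+1},p_i]$. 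Summing, one obtains
\[
(-\cvec{1},\cvec{0})f=\sum_{i=0}^{d-1}m(n,p_{i+1})f[\cvec{0}][p_{i+1},p_i].
\]

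The final step is to recognise the right-hand side as $f[\cvec{0}][\cvec{0},-\cvec{1}]=\val_n(f[\cvec{0}])$ via Lemma~\ref{cubePath}. Indeed, reverse the indices by setting $v_j=p_{d-j+1}$ for $1\leq j\leq d+1$; this gives a binary directive sequence $\cvec{0}=v_1\geq v_2\geq\cdots\geq v_{d+1}=-\cvec{1}$. Applying Lemma~\ref{cubePath} to $t=f[\cvec{0}]$ with this sequence, using $m(n,v_1)=1$, yields exactly
\[
f[\cvec{0}][\cvec{0},-\cvec{1}]=\sum_{j=1}^{d}m(n,v_j)\,f[\cvec{0}][v_j,v_{j+1}]=\sum_{i=0}^{d-1}m(n,p_{i+1})\,f[\cvec{0}][p_{i+1},p_i],
\]
after the reindexing $i=d-j$. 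Combining the two computations gives $\val_n(f[\cvec{0}])=(-\cvec{1},\cvec{0})f=\lbl(f,(-\cvec{1},\cvec{0}))$, completing the reduced case and hence the lemma.

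The main obstacle is mostly bookkeeping rather than conceptual: the definitions of $\faceat$, of $\edge_z(s)$, and of the sign in $(p,p')f$ all need to be lined up so that the path integral from $-\cvec{1}$ to $\cvec{0}$ produces precisely the summation identity that Lemma~\ref{cubePath} delivers. Once the diagonal path is chosen and the correspondence between its edges and the $(d-1)$-long chain $v_1>v_2>\cdots>v_{d+1}$ inside the cube $f[\cvec{0}]$ is matched up, Lemma~\ref{cubePath} supplies the key algebraic identity with no further computation required.
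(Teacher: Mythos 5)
Your proof is correct and follows essentially the same route as the paper: reduce to $z=\cvec{0}$ by shift-invariance, integrate along the staircase path from $-\cvec{1}$ to $\cvec{0}$ through the corners of $f[\cvec{0}]$, identify each edge contribution with a face value of the cube, and collapse the weighted sum via Lemma~\ref{cubePath}. The only cosmetic difference is that you expand the path integral directly from the definition (justified by Theorem~\ref{indPath}) where the paper invokes Lemma~\ref{pathsum}, which amounts to the same computation.
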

\begin{proof}
Up to shifting $f$ we may assume that $z=\cvec{0}$. Let $t=f[\cvec{0}]$. For $0\leq i\leq d$ let $e_i'=\sum_{j=1}^i e_j$. Then $e_i'$ form a binary directive sequence. Using the definition of the undirected label of a single edge we see that
\[\lbl(f,\{e_{i+1}',e_i'\})=\faceat(e_i',e_{i+1})f[\cvec{0}]=t[e_i',e_{i+1}'].\]

Using this we compute that 
\begin{flalign*}
\lbl(f,(-\cvec{1},\cvec{0}))\overset{L.~\ref{pathsum}}&{=}\sum_{i=0}^{d-1}m(n,e_i')\lbl(f,\{e_{i+1}',e_i'\}) \\
&=\sum_{i=0}^{d-1}m(n,e_i')t[e_i',e_{i+1}']\overset{L.~\ref{cubePath}}{=}t[\cvec{0},\cvec{1}]=\val_{n}(t).
\end{flalign*}
\end{proof}

For a face $s=\faceat(p,v)$ and $z\in\Z^d$ denote $\dedge_z(s)=(z+p+v,z+p)$ (this generalizes an earlier definition where $s$ was an edge). We generalize the defining formula for the label of an edge in the following lemma.

\begin{lemma}\label{facelabelvalue}
Let $f\in X_{n}$.  For $s=\faceat(p,v)$ and $z\in\Z^d$ it holds that
\[\lbl(f,\dedge_z(s))=sf[z].\]
\end{lemma}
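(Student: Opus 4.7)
The plan is to reduce to the case $z=\cvec{0}$ and then realize both sides of the claimed equality as the same ``middle digit'' in a base expansion of $\val_{n}(f[\cvec{0}])$ induced by a suitable monotone chain through the corners of the face $s$.

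First I would use Lemma~\ref{shiftlabel}, which gives $\lbl(f,\dedge_z(s))=\lbl(f,(z+p+v,z+p))=\lbl(\sigma_z(f),(p+v,p))$, together with $sf[z]=s\,\sigma_z(f)[\cvec{0}]$, to reduce to $z=\cvec{0}$. Set $t=f[\cvec{0}]$ and $a=\val_{n}(t)$, so $t=\cube_{n}(a)$. Unfolding the definitions, the target reduces to showing $\lbl(f,(p+v,p))=t[p,p+v]=\base_{n}(a,(p,p+v))[2]$, since by the correspondence between $V_d$ and $S_d$ the face $s=\faceat(p,v)$ is exactly the hyperface spanned by $p$ and $p+v$ with $p\geq p+v$.

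The key step is to use the monotone chain $p_0=\cvec{0}\geq p_1=p\geq p_2=p+v\geq p_3=-\cvec{1}$, which is valid because $p,p+v\in\{-1,0\}^d$. Applying Lemma~\ref{baselabel} to this chain with $k=k'=3$ and $i=2$ yields
\[\base_{n}(\lbl(f,(-\cvec{1},\cvec{0})),(p,p+v,-\cvec{1}))[2]=\lbl(f,(p+v,p)).\]
Then Lemma~\ref{endscope}, applied with matching entries $v_1=w_1=p$ and $v_2=w_2=p+v$, shows that the trailing $-\cvec{1}$ can be dropped without changing the second digit, so the left-hand side equals $\base_{n}(\lbl(f,(-\cvec{1},\cvec{0})),(p,p+v))[2]$. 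Finally, Lemma~\ref{diaglabelvalue} identifies $\lbl(f,(-\cvec{1},\cvec{0}))=\val_{n}(t)=a$, so combining these gives $\lbl(f,(p+v,p))=\base_{n}(a,(p,p+v))[2]=sf[\cvec{0}]$, as required.

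I do not anticipate a substantial obstacle: the only care needed is in checking that the chain $\cvec{0}\geq p\geq p+v\geq -\cvec{1}$ genuinely meets the hypothesis $p_k\leq\cdots\leq p_0$ of Lemma~\ref{baselabel} (which follows immediately from $p,v\in\{-1,0\}^d$ being orthogonal), and that the degenerate cases $v=\cvec{0}$, $p=\cvec{0}$, or $p+v=-\cvec{1}$ (which correspond to repeated entries in the chain) still fit within the mixed-base definition, where repeated moduli simply force a zero digit. Once these checks are made, the statement follows by stringing together Lemmas~\ref{shiftlabel}, \ref{endscope}, \ref{baselabel} and \ref{diaglabelvalue} as above.
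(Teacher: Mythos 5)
Your proof is correct and follows essentially the same route as the paper: reduce to $z=\cvec{0}$, apply Lemma~\ref{baselabel} to the chain $\cvec{0}\geq p\geq p+v\geq-\cvec{1}$, and identify $\lbl(f,(-\cvec{1},\cvec{0}))=\val_n(f[\cvec{0}])$ via Lemma~\ref{diaglabelvalue}. The only (harmless) difference is that you invoke Lemma~\ref{endscope} explicitly to discard the trailing $-\cvec{1}$, whereas the paper simply applies Lemma~\ref{baselabel} with $k'=2<k=3$, the case $k'<k$ being already built into that lemma.
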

\begin{proof}
Up to shifting $f$ we may assume that $z=\cvec{0}$. Let $\cube_{n}(a)=f[\cvec{0}]$. Then, by applying Lemma~\ref{baselabel} with the choice $(p_0,p_1,p_2,p_3)=(\cvec{0},p,p+v,\cvec{1})$,
\begin{flalign*}
\lbl(f,(\dedge_{\cvec{0}}(s)))&=\lbl(f,(p+v,p))\overset{L.~\ref{baselabel}}{=}\base_{n}(\lbl(f,(-\cvec{1},\cvec{0})),(p,p+v))[2] \\
\overset{L.~\ref{diaglabelvalue}}&{=}\base_{n}(a,(p,p+v))[2]=\cube_n(a)[p,p+v]=sf[\cvec{0}].
\end{flalign*}
\end{proof}

As we shall see, by Lemma~\ref{diaglabelvalue} labels along the direction of $\cvec{1}$ can be found by reading the values of a sequence of cubes and interpreting this sequence of values as the representation of a number in base $N=\prod_{i=1}^d n[i]$. We actually prove a more general statement by using Lemma~\ref{facelabelvalue}. 

\begin{lemma}\label{labelIsBaseexp}
Let $p_k\leq\cdots\leq p_0$ with $p_{i+1}-p_i\geq-\cvec{1}$ for $0\leq i<k$ and $f\in X_{n}$. Then
\[\lbl(f,(p_k,p_0))=\sum_{i=0}^{k-1} m(n,p_{i}-p_0)\faceat(\cvec{0},p_{i+1}-p_i)f[p_i].\]
In particular, for any $k\in\N$ and $v\in\Z^d$,
\[\lbl(f,(v-\cvec{k},v))=\sum_{i=0}^{k-1} N^i\val_n(f[v+p_i]).\]
\end{lemma}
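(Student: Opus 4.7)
The plan is to recognize this as an immediate synthesis of Lemma~\ref{pathsum} and Lemma~\ref{facelabelvalue}. First I would apply Lemma~\ref{pathsum} to the decreasing sequence $p_0,p_1,\dots,p_k$, which yields
\[\lbl(f,(p_k,p_0))=\sum_{i=0}^{k-1} m(n,p_{i}-p_0)\lbl(f,(p_{i+1},p_{i})).\]
This already matches the target formula up to rewriting the directed labels $\lbl(f,(p_{i+1},p_{i}))$ as face labels of the cube $f[p_i]$.

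Next I would fix an index $i$ and set $v_i=p_{i+1}-p_i$. The hypotheses $p_{i+1}\leq p_i$ and $p_{i+1}-p_i\geq -\cvec{1}$ say exactly that $v_i\in\{-1,0\}^d$, so $s_i=\faceat(\cvec{0},v_i)$ is a well-defined face of the unit hypercube (spanned by $\cvec{0}$ and $v_i$). Unwinding the definition $\dedge_z(\faceat(p,v))=(z+p+v,z+p)$ with $z=p_i$, $p=\cvec{0}$, $v=v_i$ gives $\dedge_{p_i}(s_i)=(p_{i+1},p_i)$. Lemma~\ref{facelabelvalue} then yields
\[\lbl(f,(p_{i+1},p_i))=\lbl(f,\dedge_{p_i}(s_i))=s_i f[p_i]=\faceat(\cvec{0},p_{i+1}-p_i)f[p_i],\]
and substituting this into the Lemma~\ref{pathsum} expansion gives the first claim.

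For the ``in particular'' part I would specialize by choosing $p_i=v-i\cvec{1}$, so that $p_0=v$, $p_k=v-\cvec{k}$, and $p_{i+1}-p_i=-\cvec{1}$ for every $i$. Then $\faceat(\cvec{0},-\cvec{1})f[p_i]=f[p_i][\cvec{0},-\cvec{1}]=\val_n(f[v-i\cvec{1}])$ by the definition of $\val_n$, while
\[m(n,p_i-p_0)=m(n,-i\cvec{1})=\prod_{j=1}^d n[j]^{i}=N^{i}.\]
Substituting into the general formula gives $\lbl(f,(v-\cvec{k},v))=\sum_{i=0}^{k-1}N^{i}\val_n(f[v-i\cvec{1}])$, which is the claimed identity.

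There is no real obstacle here; everything is bookkeeping. The only step that requires a moment of care is matching the direction conventions: one must check that with $s=\faceat(\cvec{0},p_{i+1}-p_i)$ the associated directed edge $\dedge_{p_i}(s)$ really is $(p_{i+1},p_i)$ and not its reverse, so that Lemma~\ref{facelabelvalue} is applicable without an extra sign.
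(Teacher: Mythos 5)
Your proposal is correct and follows exactly the paper's own route: the paper proves the first identity by combining Lemma~\ref{pathsum} with Lemma~\ref{facelabelvalue}, and the second by specializing $p_i=v-\cvec{i}$, which is precisely what you do (with the welcome extra care of verifying $\dedge_{p_i}(\faceat(\cvec{0},p_{i+1}-p_i))=(p_{i+1},p_i)$ so no sign issue arises). Note also that your final formula $\sum_{i=0}^{k-1}N^i\val_n(f[v-\cvec{i}])$ is the intended reading of the statement's $f[v+p_i]$, which appears to be a leftover of the substitution $p_i=v-\cvec{i}$.
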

\begin{proof}
The first claim follows from Lemmas~\ref{pathsum} and~\ref{facelabelvalue}. The second claim follows from the first with the choice $p_i=v-\cvec{i}$ for $0\leq i\leq k$.
\end{proof}

We conclude this subsection with some justification for why the elements of $T_n$ are called multiplication cubes.

\begin{proposition}\label{shiftMulProp}
Let $p, p'\in\Z^d$, $v\in\Z^d$ and $f\in X_n$. Then
\[\lbl(f,(p+v,p'+v)) = m(n,-v)\lbl(f,(p,p'))+m(n,-p'-v)((p+v,p)f+(p',p'+v)f).\]
\end{proposition}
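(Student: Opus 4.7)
The plan is to unfold the definition of the directed label and then decompose the path integral using path-independence. Concretely, by definition
\[
\lbl(f,(p+v,p'+v)) = (p+v,p'+v)f/\wgt_{n}(p'+v).
\]
I would then apply Theorem~\ref{indPath} to split the path from $p+v$ to $p'+v$ via the intermediate points $p$ and $p'$, obtaining
\[
(p+v,p'+v)f = (p+v,p)f + (p,p')f + (p',p'+v)f,
\]
since the value of the path integral depends only on the endpoints.

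Next I would rewrite the middle term $(p,p')f$ as $\wgt_{n}(p')\lbl(f,(p,p'))$ directly from the definition of the directed label. Dividing everything by $\wgt_{n}(p'+v)=m(n,p'+v)$ and recalling that $\wgt_{n}=m(n,\cdot)$, the coefficient of $\lbl(f,(p,p'))$ becomes
\[
\frac{m(n,p')}{m(n,p'+v)} = \prod_{j=1}^d n[j]^{v[j]} = m(n,-v),
\]
while the two remaining terms $(p+v,p)f$ and $(p',p'+v)f$ pick up the common factor $1/m(n,p'+v)=m(n,-p'-v)$. Collecting gives exactly the claimed identity.

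There is no real obstacle here: the only step requiring care is the bookkeeping of the weight ratios, i.e. verifying that $m(n,p')/m(n,p'+v)=m(n,-v)$ and $1/m(n,p'+v)=m(n,-p'-v)$, both of which follow immediately from the product formula $m(n,w)=\prod_j n[j]^{-w[j]}$. The conceptual content is entirely carried by Theorem~\ref{indPath}, which converts the path $p+v\to p'+v$ into a concatenation passing through $p$ and $p'$ so that the ``interior'' label $\lbl(f,(p,p'))$ can be separated from the two ``connector'' path integrals.
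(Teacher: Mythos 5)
Your proposal is correct and follows essentially the same route as the paper: decompose $(p+v,p'+v)f$ through $p$ and $p'$ using Theorem~\ref{indPath}, convert the middle path integral to $m(n,p')\lbl(f,(p,p'))$, and divide by $m(n,p'+v)$. The weight-ratio bookkeeping you carry out, $m(n,p')/m(n,p'+v)=m(n,-v)$ and $1/m(n,p'+v)=m(n,-p'-v)$, is exactly what the paper leaves implicit in its final division step.
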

\begin{proof}
By Theorem~\ref{indPath} $(p+v,p'+v)f=(p+v,p)f+(p,p')f+(p',p'+v)f$, and by replacing path integrals with labels we find
\[m(n,p'+v)\lbl(f,(p+v,p'+v))=(p+v,p)f+m(n,p')\lbl(f,(p,p'))+(p',p'+v)f.\]
The claim follows by dividing both sides of the equation with $m(n,p'+v)$.
\end{proof}

The message of this proposition is that moving in $\Z^d$ by a vector $v$ changes the values of the labeling map by the factor of $m(n,-v)$, possibly up to an error term. In particular, moving to the direction of a basis vector $e_i$ corresponds to multiplication by $n[i]$ in this sense. Sometimes the error term can be made equal to zero. We also remark that another type of tile set performing multiplication in a similar sense has appeared in~\cite{Kari96b}.

\begin{example}\label{powersof2}
We may now explain the powers of two in Figure~\ref{tiling}. Observe that in the figure $(-\cvec{3}+e_1,-\cvec{3})f=(\cvec{0},e_1)f=0$. Then by applying the previous proposition for $p=-\cvec{3}$, $p'=\cvec{0}$ and $v=e_1$ it follows that
\[\lbl(f,(-\cvec{3}+e_1,e_1))=m((2,5),-e_1)\lbl(f,(-\cvec{3},\cvec{0}))=2\lbl(f,(-\cvec{3},\cvec{0})).\]
Alternatively, by using the latter part of Lemma~\ref{labelIsBaseexp} the values $\lbl(f,(-\cvec{3},\cvec{0}))=64$ and $\lbl(f,(-\cvec{3}+e_1,e_1))=128$ can be read directly from the figure. Similar arguments connect the other powers of two appearing in the figure with each other. 
\end{example}

Next we show that Proposition~\ref{shiftMulProp} can be extended to path integrals over some infinite paths so that the error terms vanish under very natural assumptions, thus connecting multiplication operations to moving around tessellations in an even more satisfactory sense. Given vectors $p,v\in\Z^d$ with $v\leq\cvec{0}$ we consider the infinite path going through $p$ along $v$, and given $f\in X_n$ we denote

\[\fractional_{p,v}(f)=\lim_{i\to\infty}(p,p-iv)f\mbox{ and }\integ_{p,v}(f)=\lim_{i\to\infty}(p+iv,p)f.\]
Alternatively, we can rewrite
\begin{flalign*}
&\fractional_{p,v}(f)=\sum_{i=0}^\infty(p-i v,p-(i+1)v)f  \\
&=\wgt_n(p)\sum_{i=0}^\infty\wgt_n(v)^{-(i+1)}\lbl(f,(p-i v,p-(i+1)v))\quad\mbox{and} \\
&\integ_{p,v}(f)=\sum_{i=0}^\infty(p+(i+1)v,p+i v)f \\
&=\wgt_n(p)\sum_{i=0}^\infty\wgt_n(v)^{i}\lbl(f,(p+(i+1)v,p+iv)),
\end{flalign*}
where $\lbl(f,(p-i v,p-(i+1)v))$ and $\lbl(f,(p+(i+1)v,p+iv))$ are natural numbers less than $\wgt_n(v)$ by Lemma~\ref{labelbound}. The names of these quantities come from the fact that these are essentially (up to ignoring the factor $\wgt_n(p)$) the fractional parts and integral parts of some quantity represented in base $\wgt_n(p)$: $\fractional_{p,v}(f)/\wgt_n(p)\in[0,1]$ and $\integ_{p,v}(f)/\wgt_n(p)\in\N$ if it is finite.

We write
\begin{flalign*}
&\real_{p,v}(f)=\integ_{p,v}(f)+\fractional_{p,v}(f)=\lim_{i\to\infty}(p+iv,p-iv)f \\
&=\wgt_n(p)\sum_{i=-\infty}^\infty\wgt_n(v)^{i}\lbl(f,(p+(i+1)v,p+iv)).
\end{flalign*}
In the special case $p=\cvec{0}$ we may omit the subscript $p$, and in this case the sum above yields a base-$\wgt_n(p)$ representation for $\real_v(f)$. If additionally $v=-\cvec{1}$, we may also omit the subscript $v$ and say that the tessellation $f$ represents the number $\real(f)$, but in Proposition~\ref{freepv} it will turn out that the choices of $p$ and $v$ do not matter much. By Lemma~\ref{diaglabelvalue} the number $\real(f)$ can be read directly from the cubes on the main diagonal of the tessellation $f$:
\[\real(f)=\sum_{i=-\infty}^\infty\wgt_n(-\cvec{1})^{i}\val_n(f[-\cvec{i}])=\sum_{i=-\infty}^\infty N^i\val_n(f[-\cvec{i}]).\]

If $\wgt_n(v)>1$, then in the sum definiting $\integ_{p,v}(f)$ the part $\wgt_n(v)^{i}$ tends to infinity and so the finiteness of the sum is equivalent to the equality $\lbl(f,(p+(i+1)v,p+iv))=0$ (or equivalently $(p+(i+1)v,p+i v)f=0$) holding for all sufficiently large $i$. In fact, finiteness of $\integ_{p,v}(f)$ also implies that many other labels are equal to zero.

\begin{lemma}\label{intCutoff}
The implications~\ref{first}$\implies$\ref{second}$\implies$\ref{third} hold for the statements
\begin{enumerate}
\item\label{first}$\integ_{p,v}(f)$ is finite for some $p,v\in\Z^d$ with $v\ll \cvec{0}$ and $\wgt_n(v)>1$,
\item\label{second}$\lbl(f,(p_2,p_1))=(p_2,p_1)f=0$ whenever $p_2\leq p_1$ and $\wgt_n(p_1)$ is sufficiently large,
\item\label{third}$\integ_{p,v}(f)$ is finite for all $p,v\in\Z^d$ with $v\leq\cvec{0}$ and $\wgt_n(v)>1$.
\end{enumerate}
In particular, finiteness of $\integ_{p,v}(f)$ does not depend on the choice of $p$ and $v$ among vectors such that $v\ll\cvec{0}$ and $\wgt_n(v)>1$.
\end{lemma}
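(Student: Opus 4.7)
I will prove the two implications (1)$\implies$(2) and (2)$\implies$(3) separately; the ``in particular'' clause then follows because (1) is a special case of (3), so in fact the three conditions are all equivalent.

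The direction (2)$\implies$(3) is straightforward. Given the threshold $W$ provided by (2), for any $p\in\Z^d$ and $v\leq\cvec{0}$ with $\wgt_n(v)>1$, the weights $\wgt_n(p+iv)=\wgt_n(p)\wgt_n(v)^i$ grow without bound, so eventually $\wgt_n(p+iv)>W$. Applying (2) to $(p_2,p_1)=(p+(i+1)v,p+iv)$ then kills every summand past that point in $\integ_{p,v}(f)$, leaving a finite sum.

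The substantive direction is (1)$\implies$(2). I would first extract from (1) an index $i_0$ with $\lbl(f,(p+(i+1)v,p+iv))=0$ for all $i\geq i_0$: by Lemma~\ref{labelbound} these labels are non-negative integers, and $\wgt_n(v)>1$ forces the tail of a non-negative integer sequence weighted by $\wgt_n(v)^i$ to vanish for the sum to be finite. Setting $q_0:=p+i_0v$ and telescoping gives $(p+iv,q_0)f=0$ for every $i\geq i_0$. The candidate threshold for (2) will be $W:=\wgt_n(q_0)$, and the heart of the argument is to show $(q_0,p_1)f=0$ whenever $\wgt_n(p_1)>W$; given this, for $p_2\leq p_1$ with $\wgt_n(p_1)>W$ one has $\wgt_n(p_2)\geq\wgt_n(p_1)>W$, so the same conclusion applies to $p_2$, and $(p_2,p_1)f=(p_2,q_0)f+(q_0,p_1)f=0$ by Theorem~\ref{indPath}.

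To establish $(q_0,p_1)f=0$ I plan a two-sided sandwich. On one side, since $v\ll\cvec{0}$ some index $i\geq i_0$ satisfies $p+iv\leq p_1$, and Lemma~\ref{labelbound} then gives $\lbl(f,(p+iv,p_1))\in\N$; combined with the telescoping identity $(p+iv,p_1)f=(q_0,p_1)f$ this shows $(q_0,p_1)f$ is a non-negative integer multiple of $\wgt_n(p_1)$. On the other side, rerouting the path through $q:=\max\{q_0,p_1\}$ (coordinate-wise) and applying Theorem~\ref{indPath}, $(q_0,p_1)f=(q_0,q)f-(p_1,q)f$; each leg now has coordinate-ordered endpoints, so Lemma~\ref{labelbound} yields $(q_0,q)f\in[0,W)$ and $(p_1,q)f\in[0,\wgt_n(p_1))$, giving $(q_0,p_1)f\in(-\wgt_n(p_1),W)$. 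Intersecting with $\wgt_n(p_1)\cdot\N$ and using $\wgt_n(p_1)>W$ leaves only the value $0$. The main obstacle is locating this two-sided sandwich: the integrality of the ratio $(q_0,p_1)f/\wgt_n(p_1)$ requires the long telescoping path that exploits (1), while the size bound $(q_0,p_1)f<W$ requires the short path through $q=\max\{q_0,p_1\}$, and neither alone is enough.
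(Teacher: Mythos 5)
Your proof is correct, and while the (2)$\implies$(3) direction coincides with the paper's, your argument for (1)$\implies$(2) takes a genuinely different route. The paper's proof works with the threshold $\real_{p,v}(f)$: it picks $I$ with $p+Iv\leq p_2$ and $p_1\leq p-Iv$, uses nonnegativity of labels (Lemma~\ref{labelbound}) to sandwich $(p_2,p_1)f$ below the finite total $\real_{p,v}(f)$ along the bi-infinite path through $p$ in direction $v$, and then the integrality of $\lbl(f,(p_2,p_1))$ forces it to vanish once $\wgt_n(p_1)>\real_{p,v}(f)$. You instead first extract the index $i_0$ past which the integer-part labels vanish (a fact the paper records just before the lemma), take the threshold $W=\wgt_n(p+i_0v)$, and run a divisibility-versus-size sandwich: the telescoped path from $p+iv$ shows $(q_0,p_1)f$ is a nonnegative integer multiple of $\wgt_n(p_1)$, while the detour through $q=\max\{q_0,p_1\}$ bounds it in $(-\wgt_n(p_1),W)$, so it is zero; both endpoints $p_1,p_2$ are then handled via $q_0$. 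What each buys: the paper's version is shorter and exploits the always-finite fractional part to get an immediate analytic bound; yours never touches $\real$ or $\fractional$ and, after the extraction of $i_0$, is purely arithmetic, relying only on Lemma~\ref{labelbound} and Theorem~\ref{indPath}. Two small points to tidy up in a write-up: justify the reversal $(q,p_1)f=-(p_1,q)f$ (it follows from Lemma~\ref{cancel} together with Theorem~\ref{indPath}), and note that your aside that all three statements are equivalent needs some admissible pair $(p,v)$ with $v\ll\cvec{0}$, $\wgt_n(v)>1$ to exist (i.e.\ $N>1$); this does not affect the ``in particular'' clause, which follows from the implication chain exactly as you say.
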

\begin{proof}
To show the implication~\ref{first}$\implies$\ref{second}, assume that $\integ_{p,v}(f)$ is finite for some $v\ll \cvec{0}$ and $\wgt_n(v)>1$ and consider any $p_2\leq p_1$ such that $\wgt_n(p_1)>\real_{p,v}(f)$. Since $v\ll\cvec{0}$, we may fix some $I\in\N$ such that $p_1\leq p-I v$ and $p+I v\leq p_2$. If it were the case that $\lbl(f,(p_2,p_1))>0$, then
\begin{flalign*}
&\real_{p,v}(f)\geq(p+Iv,p-I v)f=(p+Iv,p_2)f+(p_2,p_1)f+(p_1,p-Iv)f \\
&\geq (p_2,p_1)f=\wgt_n(p_1)\lbl(f,(p_2,p_1))>\real_{p,v}(f)\lbl(f,(p_2,p_1))
\end{flalign*}
and $\lbl(f,(p_2,p_1))>0$ implies that $\real_{p,v}(f)>0$, but dividing by $\real_{p,v}(f)$ in the inequalities yields $1>\lbl(f,(p_2,p_1))$, a contradiction.

To show the implication~\ref{second}$\implies$\ref{third}, assume that $\lbl(f,(p_2,p_1))=0$ whenever $p_2\leq p_1$ and $\wgt_n(p_1)$ is sufficiently large. The number $\wgt_n(p+i v)$ tends to infinity as $i$ tends to infinity, so by the assumption $\lbl(f,(p+(i+1)v,p+iv))=0$ for sufficiently large $i$ and thus $\integ_v(f)$ is finite.
\end{proof}

\begin{lemma}
For $p,q,v,w\in\Z^d$ satisfying $v,w\leq\cvec{0}$ and $\wgt_n(v),\wgt_n(w)>1$ and for $f\in X_n$ it holds that $\fractional_{p,v}(f)=(p,q)f+\fractional_{q,w}(f)$.
\end{lemma}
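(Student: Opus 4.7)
The strategy is to decompose the identity into two independent moves and then compose them. I will first prove a \emph{base-point shift}
\[\fractional_{p,v}(f) = (p,q)f + \fractional_{q,v}(f)\]
(keeping the direction $v$ fixed), and then a \emph{direction change}
\[\fractional_{q,v}(f) = \fractional_{q,w}(f)\]
(keeping the anchor $q$ fixed). The two together yield the claim, and in both cases the hypothesis $\wgt_n(v), \wgt_n(w) > 1$ is what forces the error terms to vanish geometrically.

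For the base-point shift, path independence (Theorem~\ref{indPath}) gives
\[(p, p - iv)f = (p,q)f + (q, q - iv)f + (q - iv, p - iv)f,\]
so after taking $i \to \infty$ only the final term needs attention. Fixing any path $P$ from $q$ to $p$, Lemma~\ref{pathshift} gives $(q - iv, p - iv)f = (P - iv)f = \wgt_n(v)^{-i}\,(q,p)\sigma_{-iv}(f)$. The quantity $(q,p)g$ is uniformly bounded over $g \in X_n$, because $P$ has fixed length, each edge contributes a bounded label (an element of some $\digs_{n[j]}$) times a fixed vertex weight. Hence the factor $\wgt_n(v)^{-i}$ drives the error to $0$.

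For the direction change, set $u = v + w$, which satisfies $u \le \cvec{0}$ and $\wgt_n(u) = \wgt_n(v)\wgt_n(w) > 1$. The vertex $q - iu$ dominates both $q - iv$ and $q - iw$ in the product order, so by path independence
\[(q, q - iv)f - (q, q - iw)f = (q - iw, q - iu)f + (q - iu, q - iv)f.\]
Lemma~\ref{pathshift} rewrites each summand via a tessellation shift; for instance $(q - iw, q - iu)f = \wgt_n(w)^{-i}\,(q, q - iv)\sigma_{-iw}(f)$, and by Lemma~\ref{labelbound} we have the uniform bound $(q, q - iv)g \in [0, \wgt_n(q))$ for every $g \in X_n$, so this summand tends to $0$. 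The other summand is handled symmetrically with the roles of $v$ and $w$ swapped, yielding the decay factor $\wgt_n(v)^{-i}$ in place of $\wgt_n(w)^{-i}$. Both sides of the equality vanish in the limit, proving $\fractional_{q,v}(f) = \fractional_{q,w}(f)$.

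The main obstacle is the direction-change step, because $q - iv$ and $q - iw$ are generally incomparable in the product order, so the clean monotone estimate of Lemma~\ref{labelbound} does not apply to the segment between them directly. Introducing the detour through the common upper vertex $q - iu$ restores monotonicity on each half, and Lemma~\ref{pathshift} then extracts the geometric decay factors $\wgt_n(w)^{-i}$ and $\wgt_n(v)^{-i}$ which, together with $\wgt_n(v), \wgt_n(w) > 1$, force the errors to disappear in the limit.
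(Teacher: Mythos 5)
Your proof is correct, and it takes a different route from the paper's. The paper does everything in one step: it compares both $\fractional_{p,v}(f)$ and $\fractional_{q,w}(f)$ to a single canonical quantity $\fractional_{\cvec{0},-\cvec{k}}(f)$, choosing $k$ so large that $-\cvec{k}=v+v'$ with $v'\ll\cvec{0}$; then $\fractional_{\cvec{0},-\cvec{k}}(f)-(\cvec{0},p)f-\fractional_{p,v}(f)=\lim_{i\to\infty}(p-iv,i\cvec{k})f$, and since eventually $p-iv\leq i\cvec{k}$, a single application of Lemma~\ref{labelbound} bounds this by $\wgt_n(p)\wgt_n(v)^{-i}\to 0$, after which the claim follows by comparing the two anchor-direction pairs through the common diagonal reference. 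You instead split the statement into a base-point shift (same direction $v$) and a direction change (same anchor $q$), and both of your estimates are sound: in the first, Theorem~\ref{indPath} gives the three-term decomposition, Lemma~\ref{pathshift} extracts the factor $\wgt_n(v)^{-i}$ from the translated fixed path, and the integral of any $g\in X_n$ over that fixed path is uniformly bounded because its finitely many edge labels lie in the sets $\digs_{n[j]}$ and its vertex weights are fixed; in the second, the detour through the common upper vertex $q-i(v+w)$ restores comparability so that Lemma~\ref{labelbound} applies (giving the uniform bound $(q,q-iv)g<\wgt_n(q)$) and Lemma~\ref{pathshift} supplies the decay factors $\wgt_n(w)^{-i}$ and $\wgt_n(v)^{-i}$. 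What the paper's argument buys is brevity: one estimate, no auxiliary uniform bound on fixed paths, with the auxiliary direction $-\cvec{k}$ absorbing both the change of anchor and the change of direction at once. What yours buys is a cleaner conceptual split: the two halves are separately reusable statements (the direction change is exactly the $p=q$ case), the base-point step only needs $\wgt_n(v)>1$, and the mechanism by which $\wgt_n(v),\wgt_n(w)>1$ kills the error terms is made explicit in each move.
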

\begin{proof}
It suffices to prove that $\fractional_{-\cvec{k}}(f)=(\cvec{0},p)f+\fractional_{p,v}(f)$ for sufficiently large $k\in\N$, because then
\[(\cvec{0},p)f+\fractional_{p,v}(f)=\fractional_{-\cvec{k}}(f)=(\cvec{0},q)f+\fractional_{q,w}(f).\]
Let therefore $k\in\N$ be so large that $-\cvec{k}=v+v'$ for some $v'\ll\cvec{0}$. For sufficiently large $i\in\N$ it holds that $p-iv\leq -iv-iv'$, and so an application of Lemma~\ref{labelbound} at the position indicated below shows that the following expression is nonnegative and simultaneously bounds it from above:
\begin{flalign*}
&\fractional_{-\cvec{k}}(f)-\fractional_{p,v}(f)-(\cvec{0},p)f=\lim_{i\to\infty}(\cvec{0},i\cvec{k})f-(p,p-iv)f-(\cvec{0},p)f \\
&=\lim_{i\to\infty}(\cvec{0},i\cvec{k})f-(\cvec{0},p-iv)f=\lim_{i\to\infty}(p-iv,i\cvec{k})f \\
&=\lim_{i\to\infty}(p-iv,-iv-iv')f=\lim_{i\to\infty}\lbl(f,(p-iv,-iv-iv'))\wgt_n(v+v')^{-i} \\
\overset{L.~\ref{labelbound}}&{\leq}\lim_{i\to\infty}\wgt_n(p+iv')\wgt_n(v+v')^{-i}=\lim_{i\to\infty}\wgt_n(p)\wgt_n(v)^{-i}=0.
\end{flalign*}
\end{proof}

\begin{lemma}
Let $p,q,v,w\in\Z^d$ satisfy $v\ll\cvec{0}$, $w\leq\cvec{0}$ and $\wgt_n(v),\wgt_n(w)>1$ and let $f\in X_n$. If $\integ_{p,v}(f)$ is finite, it holds that $\integ_{p,v}(f)+(p,q)f=\integ_{q,w}(f)$.
\end{lemma}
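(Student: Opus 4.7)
The plan is to reduce the identity to showing that $(p+iv,q+iw)f\to 0$ as $i\to\infty$, and then to force that vanishing using the characterization of finite $\integ$ provided by Lemma~\ref{intCutoff}.

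First I would apply Lemma~\ref{intCutoff} to upgrade the hypothesis: the implication \ref{first}$\implies$\ref{third} ensures that $\integ_{q,w}(f)$ is also finite, and the implication \ref{first}$\implies$\ref{second} produces a threshold $W_0$ such that $(p_2,p_1)f=0$ whenever $p_2\leq p_1$ and $\wgt_n(p_1)\geq W_0$. Then path additivity (Theorem~\ref{indPath}) lets me rewrite
\[\integ_{p,v}(f)+(p,q)f=\lim_{i\to\infty}\bigl((p+iv,p)f+(p,q)f\bigr)=\lim_{i\to\infty}(p+iv,q)f\]
and
\[(p+iv,q)f-(q+iw,q)f=(p+iv,q+iw)f,\]
so the claimed equality reduces to $\lim_{i\to\infty}(p+iv,q+iw)f=0$.

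For this vanishing, my plan is to pick, for each large $i$, a common lower bound $r_i\in\Z^d$ of $p+iv$ and $q+iw$ (for instance the coordinatewise minimum), and split via Theorem~\ref{indPath}
\[(p+iv,q+iw)f=-(r_i,p+iv)f+(r_i,q+iw)f.\]
Both summands are of the form $(p_2,p_1)f$ with $p_2\leq p_1$, so the threshold $W_0$ forces each of them to zero as soon as $\wgt_n(p+iv)\geq W_0$ and $\wgt_n(q+iw)\geq W_0$. Since $\wgt_n(p+iv)=\wgt_n(p)\wgt_n(v)^i$ and $\wgt_n(q+iw)=\wgt_n(q)\wgt_n(w)^i$, both weights diverge by the hypotheses $\wgt_n(v)>1$ and $\wgt_n(w)>1$, and the whole expression is zero for all sufficiently large $i$.

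The point requiring the most care is the asymmetry between the hypotheses $v\ll\cvec{0}$ and $w\leq\cvec{0}$: if $w$ has zero entries then $q+iw$ is not driven to $-\infty$ in every coordinate, so one cannot justify the final step by any naive argument that $q+iw$ runs off to infinity pointwise. What actually matters is only that the endpoint weight $\wgt_n(q+iw)$ diverges, and the hypothesis $\wgt_n(w)>1$ is precisely what guarantees this. Once this is in place, the remaining steps are routine manipulations with Theorem~\ref{indPath} and Lemma~\ref{intCutoff}.
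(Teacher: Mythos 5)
Your proof is correct and follows essentially the same route as the paper: both arguments rest on Lemma~\ref{intCutoff} (finiteness of $\integ_{q,w}(f)$ plus the vanishing of $(p_2,p_1)f$ once $\wgt_n(p_1)$ is large) combined with path additivity from Theorem~\ref{indPath}, the paper simply fixing one large $I$ and one common lower bound $r$ of $p+Iv$ and $q+Iw$ where you phrase it as a limit with lower bounds $r_i$. Your observation that only the divergence of the endpoint weights $\wgt_n(p+iv)$ and $\wgt_n(q+iw)$ matters (not coordinatewise divergence of $q+iw$) is exactly the point the paper's choice of $I$ exploits as well.
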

\begin{proof}
From Lemma~\ref{intCutoff} it follows that $\integ_{q,w}(f)$ is also finite. By using Lemma~\ref{intCutoff} choose a sufficiently large $I\in\N$ so that $(p',p+Iv)f=0$ and $(q',q+Iw)f=0$ whenever $p'\leq p+Iv$ and $q'\leq q+Iw$. In particular $\integ_{p,v}(f)=(p+Iv,p)f$ and $\integ_{q,w}(f)=(q+Iw,q)f$. Fix some $r\in\Z^d$ such that $r\leq p+Iv$ and $r\leq p+Iw$. Then
\begin{flalign*}
&\integ_{p,v}(f)+(p,q)f=(r,p+Iv)f+(p+Iv,p)f+(p,q)f=(r,q)f \\
&=(r,q+Iw)f+(q+Iw,q)f=\integ_{q,w}(f).
\end{flalign*}
\end{proof}

\begin{proposition}\label{freepv}
Let $p,q,v,w\in\Z^d$ be such that $v\ll\cvec{0}$, $w\leq\cvec{0}$ and $\wgt_n(v),\wgt_n(w)>1$ and let $f\in X_n$. If $\real_{p,v}(f)$ is finite, it holds that $\real_{p,v}(f)=\real_{q,w}(f)$.
\end{proposition}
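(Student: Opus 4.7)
The plan is to combine the two preceding lemmas, which together already separate out the integral and fractional pieces under the change-of-basepoint operation. The only preliminary observation I need is that $\fractional_{p,v}(f)$ is automatically finite in our setting: using the series expression
\[
\fractional_{p,v}(f)=\wgt_n(p)\sum_{i=0}^\infty\wgt_n(v)^{-(i+1)}\lbl(f,(p-iv,p-(i+1)v)),
\]
each label is bounded above by $\wgt_n(v)-1$ thanks to Lemma~\ref{labelbound}, and the factors $\wgt_n(v)^{-(i+1)}$ form a convergent geometric series since $\wgt_n(v)>1$. Hence the sum converges.

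From the assumption that $\real_{p,v}(f)$ is finite together with $\real_{p,v}(f)=\integ_{p,v}(f)+\fractional_{p,v}(f)$ and the finiteness of $\fractional_{p,v}(f)$, I conclude that $\integ_{p,v}(f)$ is itself finite. This is exactly the hypothesis needed in order to apply the second of the two preceding lemmas, which yields
\[
\integ_{p,v}(f)+(p,q)f=\integ_{q,w}(f).
\]
(The assumption $v\ll\cvec{0}$ is also used here, and it is provided.)

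Independently, the first of the two preceding lemmas applies with only $v,w\leq\cvec{0}$ and $\wgt_n(v),\wgt_n(w)>1$, so it gives
\[
\fractional_{p,v}(f)=(p,q)f+\fractional_{q,w}(f).
\]
Adding the two identities, the contribution $(p,q)f$ appears on one side of the first and with the opposite role on the other; regrouping,
\[
\real_{p,v}(f)=\integ_{p,v}(f)+\fractional_{p,v}(f)=\bigl(\integ_{p,v}(f)+(p,q)f\bigr)+\fractional_{q,w}(f)=\integ_{q,w}(f)+\fractional_{q,w}(f)=\real_{q,w}(f).
\]
In particular, $\real_{q,w}(f)$ is also finite.

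The only step that might tempt an error is the bookkeeping of which lemma requires the stronger hypothesis $v\ll\cvec{0}$ versus merely $v\leq\cvec{0}$: that distinction enters only through the integer part (because Lemma~\ref{intCutoff} is used to ensure finiteness of $\integ_{q,w}(f)$ propagates), so the asymmetry between $v$ and $w$ in the hypothesis of the proposition is indispensable and naturally matches what the preceding two lemmas demand.
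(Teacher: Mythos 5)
Your proof is correct and takes essentially the same route as the paper: combine the two preceding lemmas (the fractional-part identity and the integral-part identity) and cancel the common term $(p,q)f$. The only addition is your explicit observation that $\fractional_{p,v}(f)$ is always finite by the geometric bound from Lemma~\ref{labelbound}, so finiteness of $\real_{p,v}(f)$ gives finiteness of $\integ_{p,v}(f)$ as required by the second lemma; the paper leaves this step implicit.
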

\begin{proof}
Combine the two previous lemmas to see that 
\[\fractional_{p,v}(f)+\integ_{p,v}(f)+(p,q)f=(p,q)f+\fractional_{q,w}(f)+\integ_{q,w}(f)\]
and subtract $(p,q)f$ from both sides of the equality.
\end{proof}

We are now ready to present the analogue of Proposition~\ref{shiftMulProp} for infinite paths. The following proposition shows that shifting a tessellation by $\sigma_v$ multiplies the real number it represents by $\wgt_n(-v)$ without additional error terms.

\begin{proposition}\label{infShiftMul}
For $f\in X_n$ such that $\real(f)$ is finite and $v\in\Z^d$ it holds that $\real(\sigma_v(f))=\wgt_n(-v)\real(f)$.
\end{proposition}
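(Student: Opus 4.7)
The plan is to rewrite $\real(\sigma_v(f))$ as $\wgt_n(-v)$ times a limit of path integrals of $f$ taken over \emph{shifted} paths, and then apply Proposition~\ref{freepv} to identify this shifted real part with $\real(f)$ itself.

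First I would unfold the definition to get $\real(\sigma_v(f))=\lim_{i\to\infty}(-i\cvec{1},i\cvec{1})\sigma_v(f)$. For each $i$, pick any path $P_i$ from $-i\cvec{1}$ to $i\cvec{1}$; then Lemma~\ref{pathshift} gives $(P_i+v)f=\wgt_n(v)P_i\sigma_v(f)$. Because $\wgt_n$ is multiplicative in the vector argument, $\wgt_n(v)\wgt_n(-v)=\prod_j n[j]^{-v[j]+v[j]}=1$, so this rearranges to
\[P_i\sigma_v(f)=\wgt_n(-v)(P_i+v)f=\wgt_n(-v)(v-i\cvec{1},\,v+i\cvec{1})f.\]
Passing to the limit in $i$ and using the definition of $\real_{v,-\cvec{1}}(f)$ yields
\[\real(\sigma_v(f))=\wgt_n(-v)\,\real_{v,-\cvec{1}}(f).\]

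Next I would invoke Proposition~\ref{freepv} with the parameters $p=\cvec{0}$, $q=v$ and both directive vectors equal to $-\cvec{1}$. The hypotheses $-\cvec{1}\ll\cvec{0}$ and $\wgt_n(-\cvec{1})=N>1$ hold (the degenerate case $N=1$ forces $\real(f)=0$ and is handled trivially, since then $\wgt_n\equiv 1$). Because $\real(f)=\real_{\cvec{0},-\cvec{1}}(f)$ is finite by hypothesis, the proposition gives $\real_{v,-\cvec{1}}(f)=\real(f)$; substituting into the displayed equality above produces $\real(\sigma_v(f))=\wgt_n(-v)\real(f)$.

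No serious obstacle arises: the multiplicativity of $\wgt_n$ under vector addition is what permits the constant $\wgt_n(-v)$ to be pulled outside the entire limit without leaving behind an error term, and this is precisely the improvement over the finite-path version Proposition~\ref{shiftMulProp}. The only delicate input is the change-of-basepoint statement for $\real$, but Proposition~\ref{freepv} (which in turn rests on Lemma~\ref{intCutoff}) is tailored for exactly this step.
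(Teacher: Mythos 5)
Your proof is correct and follows essentially the same route as the paper's: both rest on exactly Lemma~\ref{pathshift} and Proposition~\ref{freepv}, with the only difference being that you shift the path and apply the change-of-basepoint to $f$ (basepoints $\cvec{0}$ and $v$), whereas the paper applies Proposition~\ref{freepv} to $\sigma_v(f)$ (basepoints $\cvec{0}$ and $-v$) and then uses Lemma~\ref{pathshift}. Your explicit treatment of the degenerate case $N=1$ is a small bonus the paper leaves implicit.
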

\begin{proof}
We compute
\begin{flalign*}
&\real(\sigma_v(f))\overset{P.~\ref{freepv}}{=}\real_{-v,-\cvec{1}}(\sigma_v(f))=\lim_{i\to\infty}(-v-\cvec{i},-v+\cvec{i})\sigma_v(f)\\
&\overset{L.~\ref{pathshift}}{=}\wgt_n(-v)\lim_{i\to\infty}(-\cvec{i},\cvec{i})f=\wgt_n(-v)\real(f).
\end{flalign*}
\end{proof}

\subsection{Macrotiles and microtiles}\label{macMicSubSect}

In Figure~\ref{macrotiling} tiles of $T_{(2,5)}$ are grouped into $2\times 2$ squares, values in the centers of the new squares are given by computing the labels from the bottom left to the top right in the original $2\times 2$ squares, and labels for the edges of the new squares are given by computing the labels of the boundaries of the original $2\times 2$ squares. It turns out that the resulting squares are tiles of $T_{(4,25)}$. Grouping these new tiles again into $2\times 2$ squares yields tiles of $T_{(16,625)}$. In this subsection we show how partitioning a valid tessellation into larger squares, and even into more general (multidimensional) parallelepipeds yields new multiplication cubes.

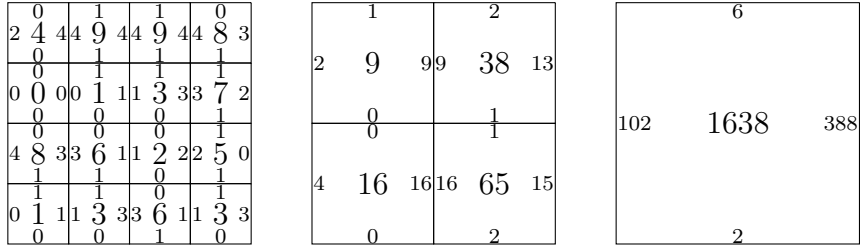
\begin{figure}[ht]
\centering
\begin{tikzpicture}[scale=0.8]

\tile[4,2,5](0,0);\tile[9,2,5](1,0);\tile[9,2,5](2,0);\tile[8,2,5](3,0);
\tile[0,2,5](0,-1);\tile[1,2,5](1,-1);\tile[3,2,5](2,-1);\tile[7,2,5](3,-1);
\tile[8,2,5](0,-2);\tile[6,2,5](1,-2);\tile[2,2,5](2,-2);\tile[5,2,5](3,-2);
\tile[1,2,5](0,-3);\tile[3,2,5](1,-3);\tile[6,2,5](2,-3);\tile[3,2,5](3,-3);

\tilescale[9,4,25,2](6,0);\tilescale[38,4,25,2](8,0);
\tilescale[16,4,25,2](6,-2);\tilescale[65,4,25,2](8,-2);

\tilescale[1638,16,625,4](13,0);

\end{tikzpicture}
\caption{Tiles from $T_{(2,5)}$ (left) grouped into larger macrotiles of $T_{(4,25)}$ (middle) and $T_{(16,625)}$ (right).}
\label{macrotiling}
\end{figure}

Throughout let $n$ be a prebasis of dimension $d$ and let $A$ be a $d\times d'$ natural number matrix. We interpret the columns $Ae_i$ of $A$ as the generating vectors of the parallelepiped which will yield the new multiplication cubes, e.g. in the case of Figure~\ref{macrotiling} we would choose $A=\diag(2,2)$. We denote $n^A=m(n,(-Ae_i)_{i=1}^{d'})$, in particular $n^I=n$ for the identity matrix $I$. We define the $A$-macrotile map $\macro_{A,n}:X_{n}\to X_{n^{A}}$ (usually written just $\macro_A$) by
\[\macro_{A,n}(f)[v]=\cube_{n^A}((-A\cvec{1},\cvec{0})\sigma_{Av}(f))=\cube_{n^A}(\lbl(f,(A(v-\cvec{1}),Av)))\]
for $f\in X_{n}$, $v\in\Z^{d'}$. One needs to check that $\macro_A(f)$ really belongs to $X_{n^A}$. It is simple to verify that $\lbl(f,(A(v-\cvec{1}),Av))\in\digs_{N'}$ with 
\[N'=\prod_{i=1}^{d'} n^A[i]=\prod_{i=1}^{d'} m(n,-Ae_i)=m(n,-A\cvec{1}),\]
because $\lbl(f,(A(v-\cvec{1}),Av))<m(n,-A\cvec{1})=N'$ by Lemma~\ref{labelbound}.

In Theorem~\ref{macroMatch} it will turn out that labels in the original tiling correspond to the labels of faces in the macrotiling. From this it follows that the hyperfaces of neighboring cubes match in $\macro_A(f)$ and that $\macro_A(f)\in X_{n^A}$.

Let us observe that
\[m(n^A,v_i)=\prod_{j=1}^{d'}m(n,-Ae_j)^{-v_i[j]}=\prod_{j=1}^{d'}m(n,A(v_i[j]e_j))=m(n,Av_i)\]
and
\[\base_{n^A}(a,(v_i))=\base(a,m(n^A,(v_i)))=\base(a,m(n,(Av_i)))=\base_{n}(a,(Av_i)).\]

\begin{theorem}\label{macroMatch}
Let $f\in X_{n}$, $z\in\Z^{d'}$ and $t=\macro_A(f)[z]$. For $s=\faceat(p,u)$ with $p,u\in\Z^{d'}$ it holds that $st=\lbl(f,(A(z+p+u),A(z+p)))$. In particular, $\topi_i(\macro_A(f)[z])=\boti_i(\macro_A(f)[z+e_i])$ for any $1\leq i\leq d'$.
\end{theorem}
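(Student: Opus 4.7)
The plan is to unfold the definition $t = \macro_A(f)[z] = \cube_{n^A}(a)$ with $a = \lbl(f,(A(z-\cvec{1}), Az))$, so that by definition of $\cube_{n^A}$ one has $st = t[p,p+u] = \base_{n^A}(a,(p,p+u))[2]$. The goal then becomes to evaluate this base expansion as a label in the original tessellation $f$.

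The first step is to apply the identity $\base_{n^A}(a,(v_i)) = \base_n(a,(Av_i))$ noted just before the theorem, converting the expression to $\base_n(a,(Ap, A(p+u)))[2]$. To evaluate this via Lemma~\ref{baselabel}, I would introduce the decreasing sequence $p_0 = Az$, $p_1 = A(z+p)$, $p_2 = A(z+p+u)$, $p_3 = A(z-\cvec{1})$. Since $p, u \in \{-1,0\}^{d'}$ are orthogonal we have $\cvec{0} \geq p \geq p+u \geq -\cvec{1}$, and because the entries of $A$ are nonnegative this yields $p_0 \geq p_1 \geq p_2 \geq p_3$. Moreover $\lbl(f,(p_3,p_0)) = a$ by construction. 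Lemma~\ref{baselabel} applied with $k = 3$, $k' = 2$, $i = 2$ then gives $\base_n(a,(p_1 - p_0,\, p_2 - p_0))[2] = \lbl(f,(p_2,p_1))$, that is, $st = \lbl(f,(A(z+p+u), A(z+p)))$, proving the first claim.

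For the ``in particular'' statement, the hyperfaces $\topi_i$ and $\boti_i$ correspond to $\faceat(\cvec{0},-\cvec{1}+e_i)$ and $\faceat(-e_i,-\cvec{1}+e_i)$ respectively. Substituting $p = \cvec{0}$, $u = -\cvec{1}+e_i$ with base point $z$ into the formula just proved gives $\topi_i(\macro_A(f)[z]) = \lbl(f,(A(z-\cvec{1}+e_i), Az))$, while substituting $p = -e_i$, $u = -\cvec{1}+e_i$ with base point $z+e_i$ gives $\boti_i(\macro_A(f)[z+e_i]) = \lbl(f,(A(z-\cvec{1}+e_i), Az))$, so the two coincide.

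The main obstacle is not any difficult computation but rather the choice of the auxiliary endpoint $p_3 = A(z-\cvec{1})$: Lemma~\ref{baselabel} requires the initial label in $f$ to be anchored by the endpoints of a decreasing sequence, and since $a$ is defined via the path from $A(z-\cvec{1})$ to $Az$, the sequence $(Ap, A(p+u))$ must be extended to include $-A\cvec{1}$ as a final coordinate. This forces the application of the lemma with $k > k'$. The only subtle check is the chain of inequalities $p_0 \geq p_1 \geq p_2 \geq p_3$, which reduces to the orthogonality of $p$ and $u$ together with the nonnegativity of $A$.
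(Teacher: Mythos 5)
Your proof is correct and follows essentially the same route as the paper: it unfolds $\macro_A(f)[z]=\cube_{n^A}(\lbl(f,(A(z-\cvec{1}),Az)))$, converts $\base_{n^A}$ to $\base_n$ via the observation preceding the theorem, and applies Lemma~\ref{baselabel} with exactly the sequence $(Az,A(z+p),A(z+p+u),A(z-\cvec{1}))$, then obtains the hyperface matching by the same two substitutions $(p,u)=(\cvec{0},-\cvec{1}+e_i)$ at $z$ and $(-e_i,-\cvec{1}+e_i)$ at $z+e_i$. The explicit verification of the monotonicity of the sequence (orthogonality of $p,u$ plus nonnegativity of $A$) is a detail the paper leaves implicit, but nothing differs in substance.
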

\begin{proof}
By applying Lemma~\ref{baselabel} with the choice 
\[(p_0,p_1,p_2,p_3)=(Az,A(z+p),A(z+p+u),A(z-\cvec{1}))\]
we have
\begin{flalign*}
st&=s\cube_{n^A}(\lbl(f,(A(z-\cvec{1}),Az))) \\
&=\base_{n^A}(\lbl(f,(A(z-\cvec{1}),Az),(p,p+u))[2] \\
&=\base_{n}(\lbl(f,(A(z-\cvec{1}),Az)),(Ap,A(p+u)))[2] \\
\overset{L.~\ref{baselabel}}&{=}\lbl(f,(A(z+p+u),A(z+p))),
\end{flalign*}
which proves the first claim. To prove the second claim, we apply the first claim twice:
\begin{flalign*}
&\topi_i(\macro_A(f)[z])=\faceat(\cvec{0},-\cvec{1}+e_i)\macro_A(f)[z]=\lbl(f,(A(z-\cvec{1}+e_i),Az)) \\
&=\faceat(-e_i,-\cvec{1}+e_i)\macro_k(f)[z+e_i]=\boti_i(\macro_A(f)[z+e_i]).
\end{flalign*}
\end{proof}

\begin{remark}\label{facesAreMulcubes}
Consider the special case where the columns of $A$ are the first $d'$ elementary basis vectors of $\Z^d$. Then for every $z'\in\Z^{d'}$ we have $Az'=z'\ins_{d'}\cvec{0}$, where $\cvec{0}\in\Z^{d-d'}$. Fix $z'\in\Z^{d'}$ and let $z=Az'$. Let $s'=\faceat(p',u')$ for $p',u'\in\Z^{d'}$, $p=Ap'$, $u=Au'$. Then $p,u\in\{-1,0\}$ are orthogonal, so we may define $s=\faceat(p,u)$ and if follows that
\begin{flalign*}
s'(\macro_A(f)[z'])\overset{T.~\ref{macroMatch}}&{=}\lbl(f,(A(z'+p'+u'),A(z'+p'))) \\
&=\lbl(f,\dedge_z(s))\overset{L.~\ref{facelabelvalue}}{=}sf[z].
\end{flalign*}
The meaning of this equality is that the tessellation $\macro_A(f)$ is formed by looking at a cut of the tessellation $f$ along the first $d'$ coordinate axes.

As a more concrete example, consider $f\in X_{(2,3,5)}$ and let the columns of $A$ be the basis vectors $e_1=(1,0,0)$ and $e_2=(0,1,0)$. For any $z'=(z_1,z_2)\in\Z^2$ and any face of the two-dimensional cube $s=\faceat((p_1,p_2),(u_1,u_2))$ one finds that 
\begin{flalign*}
&\faceat((p_1,p_2),(u_1,u_2))\macro_A(f)[(z_1,z_2)] \\
&=\faceat((p_1,p_2,0),(u_1,u_2,0))f[(z_1,z_2,0)].
\end{flalign*}
Assuming that the cube in Figure~\ref{cube} is equal to $f[(z_1,z_2,0)]$, then the values of all the hyperfaces of $\macro_A(f)[(z_1,z_2)]$ are equal to the values on the corresponding hyperfaces of $f[(z_1,z_2,0)]$ on the plane $z=0$. This explains why the top face and its adjacent edges in Figure~\ref{cube} yield a multiplication cube from $T_{(2,3)}$.
\end{remark}

\begin{lemma}\label{macroedgeSurj}
Let $f\in X_{n}$ and let $p,p'\in\Z^{d'}$. Then
\[\lbl(\macro_A(f),(p,p'))=\lbl(f,(Ap,Ap'))\quad\mbox{and}\quad(p,p')\macro_A(f)=(Ap,Ap')f.\]
\end{lemma}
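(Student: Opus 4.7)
First I would observe that the two claimed equalities are equivalent. Indeed, from $m(n^A, v) = \prod_{j=1}^{d'} m(n, -Ae_j)^{-v[j]} = m(n, Av)$ it follows that $\wgt_{n^A}(p') = \wgt_n(Ap')$, so dividing $(p,p')\macro_A(f) = (Ap,Ap')f$ by $\wgt_{n^A}(p') = \wgt_n(Ap')$ yields exactly $\lbl(\macro_A(f),(p,p')) = \lbl(f,(Ap,Ap'))$. Therefore it suffices to prove the path-integral identity $(p,p')\macro_A(f) = (Ap,Ap')f$.

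The plan is to first settle the case of a single edge $p' = p + a e_i$ with $a \in \{-1,1\}$, $1 \leq i \leq d'$, and then extend to arbitrary $p, p' \in \Z^{d'}$ by additivity of path integrals (Theorem~\ref{indPath}). For the single edge in the direction $a = +1$, the definition of the path integral over a one-step path gives
\[(p, p+e_i)\macro_A(f) = \wgt_{n^A}(p+e_i)\,\lbl(\macro_A(f), \{p, p+e_i\}) = \wgt_{n^A}(p+e_i)\,\faceat(\cvec{0},-e_i)\macro_A(f)[p+e_i],\]
and then Theorem~\ref{macroMatch} applied at $z = p+e_i$, $s = \faceat(\cvec{0},-e_i)$ (so the reference point is $\cvec{0}$ and the direction vector $u = -e_i$) identifies the last factor as $\lbl(f,(Ap, A(p+e_i)))$. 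Multiplying by $\wgt_{n^A}(p+e_i) = \wgt_n(A(p+e_i))$ converts the label back into a path integral, yielding $(Ap, A(p+e_i))f$. The case $a = -1$ is handled analogously: the definition picks up an extra sign, Theorem~\ref{macroMatch} delivers $\lbl(f,(A(p-e_i), Ap))$, and the sign is absorbed via $(q,r)f + (r,q)f = 0$ (Lemma~\ref{cancel}).

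For general $p, p' \in \Z^{d'}$, I would pick any path $P = (P_0, \dots, P_k)$ in the $d'$-dimensional grid with $P_0 = p$, $P_k = p'$, apply the single-edge case to each step
\[(P_j, P_{j+1})\macro_A(f) = (AP_j, AP_{j+1})f,\]
sum over $j$, and then recognize the right-hand side as $(Ap, Ap')f$ because path integrals in $X_n$ depend only on endpoints (Theorem~\ref{indPath}). The left-hand side sums to $(p,p')\macro_A(f)$ by the definition of path integral for arbitrary pairs in $\macro_A(f) \in X_{n^A}$, which requires that $\macro_A(f)$ itself be a valid tessellation; this is exactly what Theorem~\ref{macroMatch} (second part) guarantees.

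The main bookkeeping obstacle is the single-edge case: one must be careful that the directional sign in the definition of $(p,p')\macro_A(f)$, the choice of which endpoint contributes the weight $\wgt_{n^A}$, and the orientation convention for $\faceat(\cvec{0},-e_i)$ all line up with the path $(A(p+\cvec{0}+(-e_i)), A(p+\cvec{0}))$ appearing in Theorem~\ref{macroMatch}. Once the $a = \pm 1$ verifications are written out explicitly, the extension to arbitrary $p, p'$ is a one-line application of the additivity and path-independence of path integrals.
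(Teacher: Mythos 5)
Your proposal is correct and follows essentially the same route as the paper: the single-edge case is settled by Theorem~\ref{macroMatch} (identifying $\lbl(\macro_A(f),\{p,p\pm e_i\})$ with a label of $f$ between the corresponding $A$-images) and the general case then follows by additivity. The only difference is organizational — you extend via arbitrary paths and Theorem~\ref{indPath} at the level of path integrals, whereas the paper extends via Lemma~\ref{pathsum} at the level of labels (first along monotone segments, then through a common upper bound $p''$) — which is the same underlying additivity argument.
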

\begin{proof}
The two equalities are equivalent, so it suffices to prove the first one. We first show for $z,v\in\Z^d$, $v\geq\cvec{0}$ that
\[\lbl(\macro_A(f),(z-v,z))=\lbl(f,(A(z-v),Az)).\]
In the special case $v=e_i$ for some $1\leq i\leq d'$, let $t=\macro_A(f)[z]$ and $s=\faceat(\cvec{0},-e_i)$. Then by Theorem~\ref{macroMatch}
\begin{flalign*}
\lbl(\macro_A(f),(z-e_i,z))=\lbl(\macro_A(f),\edge_z(s))=st=\lbl(f,(A(z-e_i),Az)).
\end{flalign*}
In general $v=\sum_{k=1}^m e_{i_k}$ for some $m\in\N$. Denote $v_j=\sum_{k=1}^j e_{i_k}$ for $0\leq j\leq m$.
\begin{flalign*}
&\lbl(\macro_A(f),(z-v,z))\overset{L.~\ref{pathsum}}{=}\sum_{i=0}^{m-1}m(n^A,-v_i)\lbl(\macro_A(f),(z-v_{i}-e_{i+1},z-v_i)) \\
&=\sum_{i=0}^{m-1}m(n,-Av_i)\lbl(f,(A(z-v_{i}-e_{i+1}),A(z-v_i))) \\
\overset{L.~\ref{pathsum}}&{=}\lbl(f,(A(z-v),Az).
\end{flalign*}

To prove the statement of the lemma, let $p''\in\Z^d$ such that $p''\geq p$ and $p''\geq p'$. The previous part is applied in the following with the choices $z=p''$ and $v=p''-p$ or $v=p''-p'$.
\begin{flalign*}
&\lbl(\macro_A(f),(p,p')) \\
\overset{L.~\ref{pathsum}}&{=}m(n^A,p''-p')\lbl(\macro_A(f),(p,p''))+\lbl(\macro_A(f),(p'',p')) \\
&=m(n,A(p''-p'))\lbl(f,(Ap,Ap''))+\lbl(f,(Ap'',Ap'))\overset{L.~\ref{pathsum}}{=}\lbl(f,(Ap,Ap')).
\end{flalign*}
\end{proof}

Now we can show that the real number represented by a tessellation is preserved by the macrotile map $\macro_A$. 

\begin{proposition}\label{surjtessreal}
Let $A$ be a $d\times d'$ natural number matrix such that $N'=\wgt_n(-A\cvec{1})>1$. For any $f\in X_n$ such that $\real(f)$ is finite it holds that $\real(\macro_A(f))=\real(f)$.
\end{proposition}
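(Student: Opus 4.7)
The plan is to move the whole computation into path-integral form, translate back and forth between $\macro_A(f)$ and $f$ via Lemma~\ref{macroedgeSurj}, and then change the ``direction'' in the path-integral definition of $\real$ using Proposition~\ref{freepv}. The key observation is that $\real$ applied to $\macro_A(f)$ is defined with respect to weights $\wgt_{n^A}$, but Lemma~\ref{macroedgeSurj} converts path integrals in the macrotiling into path integrals in the original tiling with weights $\wgt_n$, and the corresponding path in $f$ goes along the direction $A\cvec{1}$ instead of $\cvec{1}$. This suggests directly identifying $\real(\macro_A(f))$ with $\real_{\cvec{0},-A\cvec{1}}(f)$.

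First I would unfold the definition
\[\real(\macro_A(f))=\lim_{i\to\infty}(-\cvec{i},\cvec{i})\macro_A(f),\]
and then apply Lemma~\ref{macroedgeSurj} to the finite path integrals $(-\cvec{i},\cvec{i})\macro_A(f)$, obtaining $(A(-\cvec{i}),A\cvec{i})f=(-iA\cvec{1},iA\cvec{1})f$. Taking $i\to\infty$ yields
\[\real(\macro_A(f))=\lim_{i\to\infty}(-iA\cvec{1},iA\cvec{1})f=\real_{\cvec{0},-A\cvec{1}}(f),\]
where the right-hand side is the one-prebasis $n$ quantity defined in the excerpt.

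Second, I would invoke Proposition~\ref{freepv} with $p=q=\cvec{0}$, $v=-\cvec{1}$ and $w=-A\cvec{1}$ to conclude $\real_{\cvec{0},-\cvec{1}}(f)=\real_{\cvec{0},-A\cvec{1}}(f)$, and hence $\real(f)=\real(\macro_A(f))$. The hypotheses to check are $v\ll\cvec{0}$ (trivial), $w\leq\cvec{0}$ (holds because $A$ has natural-number entries, so $A\cvec{1}\geq\cvec{0}$), $\wgt_n(w)=N'>1$ (the hypothesis of the proposition), and $\wgt_n(v)=N>1$. The last point is the only subtlety: if $N=1$ then every $n[i]=1$, which forces $\wgt_n(-A\cvec{1})=1$ and contradicts $N'>1$; so $N>1$ follows automatically.

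The main obstacle — mild, but worth flagging — is the bookkeeping that translates a path integral computed under the prebasis $n^A$ into one computed under $n$; this is exactly what Lemma~\ref{macroedgeSurj} accomplishes, together with the identity $A\cvec{i}=iA\cvec{1}$ that matches the ``main diagonal'' of the macrotiling with the direction $A\cvec{1}$ in the original tiling. Once these identifications are in place the argument is a one-line application of Proposition~\ref{freepv}.
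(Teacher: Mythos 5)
Your proposal is correct and matches the paper's proof essentially verbatim: the paper also unfolds $\real(\macro_A(f))$ as $\lim_{i\to\infty}(-\cvec{i},\cvec{i})\macro_A(f)$, converts via Lemma~\ref{macroedgeSurj} to $\lim_{i\to\infty}(-iA\cvec{1},iA\cvec{1})f=\real_{-A\cvec{1}}(f)$, and concludes by Proposition~\ref{freepv}. Your extra remark that $N'>1$ forces $N>1$ (so the hypothesis $\wgt_n(-\cvec{1})>1$ of Proposition~\ref{freepv} holds) is a small point the paper leaves implicit, and it is correct.
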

\begin{proof}
Finiteness of $\real(f)$ will be required for applying Lemma~\ref{freepv}. We compute
\begin{flalign*}
&\real(\macro_A(f))=\lim_{i\to\infty}(-\cvec{i},\cvec{i})\macro_A(f) \\
&\overset{L.~\ref{macroedgeSurj}}{=}\lim_{i\to\infty}(-iA\cvec{1},iA\cvec{1})f=\real_{-A\cvec{1}}(f)\overset{P.~\ref{freepv}}{=}\real(f).
\end{flalign*}
\end{proof}

In the following lemmas we show that the composition of macrotile maps corresponds to matrix multiplication.

\begin{lemma}\label{matrixComp}
Let $A$ be a $d\times d'$ and let $B$ be a $d'\times d''$ natural number matrix. Then $(n^A)^B=n^{AB}$.
\end{lemma}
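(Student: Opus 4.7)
The plan is to verify the equality componentwise. By the definitions, both sides are vectors in $\Zpos^{d''}$, and
\[(n^A)^B[j] = m(n^A, -Be_j), \qquad n^{AB}[j] = m(n, -ABe_j)\]
for $1 \leq j \leq d''$. So the whole claim reduces to proving the identity $m(n^A, v) = m(n, Av)$ for $v = -Be_j$.

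Fortunately, this identity has essentially already been recorded in the paragraph immediately following the definition of $\macro_{A,n}$, where for any directive vector $v_i$ one observes
\[m(n^A, v_i) = \prod_{j=1}^{d'} m(n, -Ae_j)^{-v_i[j]} = \prod_{j=1}^{d'} m(n, A(v_i[j] e_j)) = m(n, A v_i),\]
using the multiplicativity $m(n, u+w) = m(n, u) m(n, w)$ (immediate from $m(n,u) = \prod_j n[j]^{-u[j]}$) together with linearity of the matrix action $A(v_i[j]e_j) = v_i[j] \cdot Ae_j$. The displayed computation is valid for any $v \in \Z^{d'}$, not just directive ones, so I would just invoke it directly (or redo it in one line).

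Substituting $v = -Be_j$ then yields
\[(n^A)^B[j] = m(n^A, -Be_j) = m(n, -ABe_j) = n^{AB}[j]\]
for every $1 \leq j \leq d''$, which gives $(n^A)^B = n^{AB}$. There is no real obstacle here: the lemma is a bookkeeping identity expressing that the ``exponentiation'' $n \mapsto n^A$ is functorial with respect to matrix composition, and the essential computation is the one-line multiplicativity argument already present in the text.
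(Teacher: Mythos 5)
Your proposal is correct and follows essentially the same route as the paper: the paper's one-line proof applies the identity $m(n^A,v)=m(n,Av)$ (the computation recorded just after the definition of $\macro_{A,n}$) to the columns $-Be_i$, which is exactly your componentwise argument.
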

\begin{proof}
\[(n^A)^B=m(n^A,(-Be_i)_{i=1}^{d''})=m(n,(-ABe_i)_{i=1}^{d''})=n^{AB}.\]
\end{proof}

\begin{lemma}\label{macroComp}
Let $A$ be a $d\times d'$ and let $B$ be a $d'\times d''$ natural number matrix. Then $\macro_{B,n^A}\circ\macro_{A,n}=\macro_{AB,n}$.
\end{lemma}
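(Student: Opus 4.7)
The plan is to verify the equality pointwise: for each $f \in X_n$ and each $w \in \Z^{d''}$, show that the tiles $(\macro_{B,n^A}\circ\macro_{A,n})(f)[w]$ and $\macro_{AB,n}(f)[w]$ agree. Both sides will, by the definition of the macrotile map, be of the form $\cube_{(\cdot)}(\lbl(\cdot, (\cdot,\cdot)))$, so the strategy is to match the prebasis in the $\cube$ and to match the label arguments.

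First I would unfold the left-hand side using the definition of $\macro_{B,n^A}$ applied to $\macro_{A,n}(f)$. This gives
\[
(\macro_{B,n^A}\circ\macro_{A,n})(f)[w] = \cube_{(n^A)^B}\bigl(\lbl(\macro_{A,n}(f),(B(w-\cvec{1}),Bw))\bigr).
\]
Then I would replace $(n^A)^B$ by $n^{AB}$ using Lemma~\ref{matrixComp}, which handles the prebasis on which the $\cube$ operator is applied.

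For the label argument, the key step is Lemma~\ref{macroedgeSurj}, which says that labels in the macrotiling pull back to labels in the original tiling via multiplication by $A$. Specifically,
\[
\lbl(\macro_{A,n}(f),(B(w-\cvec{1}),Bw)) = \lbl(f,(AB(w-\cvec{1}),ABw)).
\]
Combining this with the previous identification of prebases yields
\[
(\macro_{B,n^A}\circ\macro_{A,n})(f)[w] = \cube_{n^{AB}}\bigl(\lbl(f,(AB(w-\cvec{1}),ABw))\bigr),
\]
which is precisely $\macro_{AB,n}(f)[w]$ by definition.

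Since the proof is essentially a two-line unfolding that invokes Lemma~\ref{matrixComp} and Lemma~\ref{macroedgeSurj}, there is no real obstacle — the substantive work was already done when proving that labels in $\macro_A(f)$ are pullbacks of labels in $f$ along $A$. The only thing to be mildly careful about is that $\macro_{A,n}(f)$ indeed lies in $X_{n^A}$ (which was already established in the paragraph following the definition of $\macro_A$), so that applying $\macro_{B,n^A}$ to it is legitimate.
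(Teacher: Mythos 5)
Your proof is correct and follows essentially the same route as the paper: a pointwise unfolding of the definitions, identifying the prebases via Lemma~\ref{matrixComp} and pulling the label back through $\macro_{A,n}$ via Lemma~\ref{macroedgeSurj}. Nothing to add.
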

\begin{proof}
For $f\in X_{n}$ and $v\in\Z^{d''}$ we compute
\begin{flalign*}
&\macro_{B,n^A}(\macro_{A,n}(f))[v]\overset{L.~\ref{matrixComp}}{=}\cube_{n^{AB}}(\lbl(\macro_{A,n}(f),(B(v-\cvec{1}),Bv))) \\
\overset{L.~\ref{macroedgeSurj}}&{=}\cube_{n^{AB}}(\lbl(f,(AB(v-\cvec{1}),ABv)))=\macro_{AB,n}(f)[v]
\end{flalign*}
\end{proof}

Now let $A$ be a $d\times d'$ natural number matrix all of whose rows contain a positive integer. Then the $A$-macrotile map is going to have an inverse, the $A$-microtile map $\micro_{A,n}:X_{n^{A}}\to X_{n}$. By the assumption on $A$ for every element $x\in\Z^d$ there is a $z_1\in\Z^{d'}$ such that $Az_1\geq x$ and thus $x$ can be written in the form $x=Az_1-v$ with $v\in\Z^d$, $v\geq\cvec{0}$. Let $z_2\in\Z^{d'}$ satisfy $z_1\geq z_2$, $x-\cvec{1}\geq Az_2$, let $a=\lbl(f,(z_2,z_1))$ for $f\in X_{n^A}$ and define the map $\micro_{A,n}$ (usually written just $\micro_A$) by
\[\micro_{A,n}(f)[x]=\cube_{n}(\base_{n}(a,(-v,-v-\cvec{1}))[2]).\]
We need to verify that $\micro_A(f)[x]$ is well defined (which we will do in Lemma~\ref{microUnique}) and that $\micro_A(f)$ is indeed in $X_{n}$ (which we will do in Theorem~\ref{microMatch}). It is simple that $\base_{n}(a,(-v,-v-\cvec{1}))[2])\in \digs_N$ with $N=\prod_{i=1}^d n[i]$, since 
\[\base_{n}(a,(-v,-v-\cvec{1}))[2]<m(n,-v-\cvec{1})/m(n,-v)=m(n,-\cvec{1})=N.\]

\begin{lemma}\label{microUnique}
Let $f\in X_{n^A}$ and $x\in\Z^d$. Let $z_1,z_1'\in\Z^d$, $v,v'\in\Z^{d}$, $v,v'\geq\cvec{0}$ be such that $x=Az_1-v=Az_1'-v'$. Let $z_2,z_2'\in\Z^{d'}$ satisfy $z_1\geq z_2$, $z_1'\geq z_2'$, $x-\cvec{1}\geq Az_2,Az_2'$. Denote $a=\lbl(f,(z_2,z_1))$ and $a'=\lbl(f,(z_2',z_1'))$. Then
\[\base_{n}(a,(-v,-v-\cvec{1}))[2]=\base_{n}(a',(-v',-v'-\cvec{1}))[2].\]
\end{lemma}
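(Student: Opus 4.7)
The plan is to reduce the problem to a canonical comparison and then track how the base-$n$ expansion of $a$ transforms. Given any two admissible tuples $(z_1, v, z_2)$ and $(z_1', v', z_2')$ for the same $x$, I would form the dominating tuple $(z_1'', v'', z_2'')$ with $z_1'' = \max(z_1, z_1')$ (componentwise in $\Z^{d'}$), $v'' = Az_1'' - x$, and $z_2'' = \min(z_2, z_2')$; a direct check shows this tuple is admissible. By transitivity of equality, it then suffices to handle the case $z_1' \geq z_1$ and $z_2' \leq z_2$, which forces $v' = v + A(z_1' - z_1) \geq v$. I would split this into two stages: first change $z_2 \to z_2'$ while fixing $z_1$, then change $z_1 \to z_1'$ while fixing $z_2'$.

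For the first stage, let $\tilde a = \lbl(f, (z_2', z_1))$. Applying Lemma~\ref{pathsum} to $f \in X_{n^A}$ along $z_1 \geq z_2 \geq z_2'$ yields
\[\tilde a = a + m(n^A, z_2 - z_1)\,\lbl(f, (z_2', z_2)) = a + m(n, -A(z_1-z_2))\,\lbl(f, (z_2', z_2)).\]
The hypothesis $Az_2 \leq x - \cvec{1} = Az_1 - v - \cvec{1}$ gives $A(z_1 - z_2) \geq v + \cvec{1}$, so $m(n, -A(z_1 - z_2))$ is a positive-integer multiple of $m(n, -v - \cvec{1})$. Hence $\tilde a$ differs from $a$ by a multiple of $m(n, -v - \cvec{1})$, and by uniqueness of the mixed-base expansion this changes only the final digit of $\base_n(\cdot, (-v, -v - \cvec{1}))$, leaving the entry at position $2$ fixed.

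For the second stage, let $a' = \lbl(f, (z_2', z_1'))$. Lemma~\ref{pathsum} along $z_1' \geq z_1 \geq z_2'$ gives $a' = b + m(n, -A(z_1' - z_1))\,\tilde a$ with $b = \lbl(f, (z_1, z_1'))$, and $0 \leq b < m(n, -A(z_1' - z_1))$ by Lemma~\ref{labelbound}. Writing the canonical expansion $\tilde a = \tilde a_0 + a_1\, m(n, -v) + \tilde a_2\, m(n, -v - \cvec{1})$ and using $m(n, -v') = m(n, -v)\, m(n, -A(z_1'-z_1))$ together with the analogous identity for $-v' - \cvec{1}$, I obtain
\[a' = \bigl[b + m(n, -A(z_1'-z_1))\,\tilde a_0\bigr] + a_1\, m(n, -v') + \tilde a_2\, m(n, -v' - \cvec{1}).\]
The bracketed term is less than $m(n, -v')$, since $b < m(n, -A(z_1'-z_1))$ and $\tilde a_0 < m(n, -v)$ together give $b + m(n, -A(z_1'-z_1))\,\tilde a_0 < m(n, -A(z_1'-z_1))\,m(n, -v) = m(n, -v')$. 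Thus this is the canonical expansion $\base_n(a', (-v', -v' - \cvec{1}))$, and reading off position $2$ yields $a_1$, matching the original.

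The hard part will be purely bookkeeping: keeping track of the multiplicativity $m(n, u + w) = m(n, u)\, m(n, w)$ across sign changes, verifying the bounds that make each mixed-base expansion the canonical one, and carefully justifying admissibility of the dominating tuple used in the initial reduction.
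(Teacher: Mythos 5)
Your argument is correct, and while it shares the paper's skeleton at the start, the digit-tracking is done by a genuinely different mechanism. The reduction is identical: the paper also passes to common bounds $z_1''\geq z_1,z_1'$ and $z_2''\leq z_2,z_2'$ and thereby assumes $z_1'\geq z_1$, $z_2'\leq z_2$, and both proofs rest on Lemma~\ref{pathsum}, Lemma~\ref{labelbound} and the multiplicativity of $m(n,\cdot)$. But where the paper handles both endpoint changes at once — it expands $a'=\lbl(f,(z_2',z_1'))$ in the three-term base $(A(z_1-z_1'),A(z_2-z_1'))$ with middle digit $a$, then invokes Lemma~\ref{prescope} to insert $(-v',-v'-\cvec{1})$ into that directive sequence and Lemma~\ref{endscope} to read off the desired digit from the refined expansion $(a_0,b_0,b_1,b_2,a_2)$ — you split the move into two stages and argue bare-handed: passing from $z_2$ to $z_2'$ adds a nonnegative multiple of $m(n,-v-\cvec{1})$ (since $A(z_1-z_2)\geq v+\cvec{1}$), which can only alter the top digit; passing from $z_1$ to $z_1'$ multiplies by $m(n,-A(z_1'-z_1))$ and adds a remainder $b$ strictly below that factor, which rescales the expansion into the base $(m(n,-v'),m(n,-v'-\cvec{1}))$ with the middle digit intact, the key bound $b+m(n,-A(z_1'-z_1))\tilde a_0<m(n,-v')$ being exactly what you check. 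Your route is more elementary and self-contained — it never touches the insertion/overwrite formalism — but it essentially re-derives by hand the special instance of Lemmas~\ref{prescope} and~\ref{endscope} that the paper simply cites, so it trades reuse of general machinery for explicit uniqueness-of-expansion bookkeeping (the remaining bound $0\leq a_1<N$ you leave implicit does follow from the canonical expansion of $\tilde a$, so nothing is missing).
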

\begin{proof}
It is possible to choose $z_1'',z_2''\in\Z^{d'}$ and $v''\in\Z^d$, $v''\geq\cvec{0}$ so that $x=Az_1''-v''$, $z_1''\geq z_2''$, $x-\cvec{1}\geq Az_2''$, and furthermore $z_1''\geq z_1,z_1'$ and $z_2''\leq z_2,z_2'$, i.e. $z_1''$ and $z_2''$ are common upper and lower bounds for $z_1,z_1'$ and $z_2,z_2'$. Therefore, to prove the lemma it is sufficient to consider the case of $z_1'\geq z_1$ and $z_2'\leq z_2$.

Observe that
\begin{flalign*}
\lbl(f,(z_2',z_1'))\overset{L.~\ref{pathsum}}&{=}m(n^A,z_2-z_1')\lbl(f,(z_2',z_2)) \\
&+m(n^A,z_1-z_1')\lbl(f,(z_2,z_1))+\lbl(f,(z_1,z_1')) \\
&=m(n,A(z_2-z_1'))\underbrace{\lbl(f,(z_2',z_2))}_{a_2} \\
&+m(n,A(z_1-z_1'))\underbrace{\lbl(f,(z_2,z_1))}_{a_1}+\underbrace{\lbl(f,(z_1,z_1'))}_{a_0}.
\end{flalign*}
By Lemma~\ref{labelbound}
\begin{flalign*}
a_0&<m(n^A,z_1-z_1')=m(n,A(z_1-z_1')) \\
a_1&<m(n^A,z_2-z_1')/m(n^A,z_1-z_1')=m(n,A(z_2-z_1'))/m(n,A(z_1-z_1')),
\end{flalign*}
and therefore
\[\base_{n}(\lbl(f,(z_2',z_1')),(A(z_1-z_1'),A(z_2-z_1')))=(a_0,a_1,a_2).\]
Let
\begin{flalign*}
&\base_{n}(\lbl(f,(z_2,z_1))),(-v,-v-\cvec{1})) \\
&=\base_{n}(a_1,(-v'-A(z_1-z_1'),-v'-A(z_1-z_1')-\cvec{1}))=(b_0,b_1,b_2).
\end{flalign*}
To see that the mixed base expression after this paragraph contains a valid directive sequence, we need to check that $A(z_1-z_1')\geq -v'$ and $-v'-\cvec{1}\geq A(z_2-z_1')$. By definition $Az_1-v=Az_1'-v'$ and thus $-v'=A(z_1-z_1')-v\leq A(z_1-z_1')$. The other inequality is shown by
\[-v'-\cvec{1}=-Az_1'+x-\cvec{1}\geq-Az_1'+Az_2=A(z_2-z_1').\]

By Lemma~\ref{prescope}
\[\base_{n}(\lbl(f,(z_2',z_1')),(A(z_1-z_1'),-v',-v'-\cvec{1},A(z_2-z_1')))=(a_0,b_0,b_1,b_2,a_2).\]
We conclude by computing
\begin{flalign*}
&\base_{n}(a,(-v,-v-\cvec{1}))[2]=\base_{n}(\lbl(f,(z_1,z_2))),(-v,-v-\cvec{1}))[2] \\
&=\base_{n}(\lbl(f,(z_2',z_1')),(A(z_1-z_1'),-v',-v'-\cvec{1},A(z_2-z_1')))[3] \\
\overset{L.~\ref{endscope}}&{=}\base_{n}(\lbl(f,(z_2',z_1')),(-v',-v'-\cvec{1}))[2]=\base_{n}(a',(-v',-v'-\cvec{1}))[2].
\end{flalign*}
\end{proof}

\begin{lemma}\label{microfacesloz}
Let $f\in X_{n^A}$ and $x\in\Z^d$. Let $z_1\in\Z^{d'}$, $v\in\Z^{d}$, $v\geq\cvec{0}$ be such that $x=Az_1-v$. Let $z_2\in\Z^{d'}$ satisfy $z_1\geq z_2$ and $x-\cvec{1}\geq Az_2$. Denote $a=\lbl(f,(z_2,z_1))$. For any face $s=\faceat(p,u)$ it holds that 
\[s(\micro_A(f)[x])=\base_{n}(a,(-v+p,-v+p+u))[2].\]
\end{lemma}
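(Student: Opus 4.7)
The plan is to observe that this is essentially a direct application of Lemma~\ref{twostepbase}, together with the definitions of $\micro_A$ and $\cube_n$.

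First I would unwind the definition: $\micro_A(f)[x]=\cube_n(b)$ where $b=\base_n(a,(-v,-v-\cvec{1}))[2]$. For the face $s=\faceat(p,u)$ (with $p,u\in\{-1,0\}^d$ orthogonal, so $p+u\in\{-1,0\}^d$), the definition of $\cube_n$ gives
\[
s(\micro_A(f)[x]) \;=\; \cube_n(b)[p,p+u] \;=\; \base_n(b,(p,p+u))[2].
\]
So the task reduces to showing
\[
\base_n(b,(p,p+u))[2] \;=\; \base_n(a,(-v+p,-v+p+u))[2].
\]

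Next I would apply Lemma~\ref{twostepbase} with the choices $p_1=-v$, $p_2=-v-\cvec{1}$, $q_1=p$, $q_2=p+u$. I need to verify the hypotheses of that lemma: clearly $q_1=p\geq p+u=q_2$ since $u\leq\cvec{0}$, and the chain $p_1\geq p_1+q_1\geq p_1+q_2\geq p_2$ becomes $-v\geq -v+p\geq -v+p+u\geq -v-\cvec{1}$, which holds because $p\leq\cvec{0}$, $u\leq\cvec{0}$, and $p+u\geq-\cvec{1}$. Finally, since $v\geq\cvec{0}$, we have $-v\leq\cvec{0}$, so $(p_1,p_2)$ is a genuine directive sequence. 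With $\base_n(a,(p_1,p_2))=(a_0,a_1,a_2)$, the middle coordinate is $a_1=\base_n(a,(-v,-v-\cvec{1}))[2]=b$, and Lemma~\ref{twostepbase} yields
\[
\base_n(a,(-v+p,-v+p+u))[2] \;=\; \base_n(a_1,(p,p+u))[2] \;=\; \base_n(b,(p,p+u))[2],
\]
which combined with the first display completes the proof.

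I do not expect any real obstacle: the only slightly delicate point is checking that the four vectors $-v,-v+p,-v+p+u,-v-\cvec{1}$ really form a valid directive sequence in $\Z^d$ bounded above by $\cvec{0}$, which relies on the orthogonality of $p$ and $u$ (so that $p+u\in\{-1,0\}^d$) together with $v\geq\cvec{0}$. Once that is in hand, the proof is a single invocation of Lemma~\ref{twostepbase}. Note that well-definedness of the right-hand side of the defining equation for $\micro_A(f)[x]$ (hence of $b$) is already guaranteed by Lemma~\ref{microUnique}, so no separate independence verification is needed here.
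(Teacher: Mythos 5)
Your proposal is correct and follows the paper's own proof essentially verbatim: both unwind the definition of $\micro_A(f)[x]$ as $\cube_n(\base_n(a,(-v,-v-\cvec{1}))[2])$ and then apply Lemma~\ref{twostepbase} with $p_1=-v$, $p_2=-v-\cvec{1}$, $q_1=p$, $q_2=p+u$. Your explicit verification that the relevant vectors form valid directive sequences is a small bonus that the paper leaves implicit.
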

\begin{proof}
Write $\base_{n}(a,(-v,-v-\cvec{1}))=(a_0,a_1,a_2)$. By Lemma~\ref{twostepbase}
\[\base_{n}(a,(-v+p,-v+p+u))[2]=\base_{n}(a_1,(p,p+u))[2],\]
so we can compute
\begin{flalign*}
&\faceat(p,u)(\micro_A(f)[Az_1-v])=\base_{n}(\base_{n}(a,(-v,-v-\cvec{1}))[2],(p,p+u))[2] \\
&=\base_{n}(a_1,(p,p+u))[2]=\base_{n}(a,(-v+p,-v+p+u))[2].
\end{flalign*}
\end{proof}

\begin{theorem}\label{microMatch}
Let $f\in X_{n^A}$. Then $\micro_A(f)\in X_{n}$.
\end{theorem}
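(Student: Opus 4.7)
The plan is to verify the matching condition $\topi_i(\micro_A(f)[x]) = \boti_i(\micro_A(f)[x+e_i])$ for all $x \in \Z^d$ and $1 \leq i \leq d$ by choosing a single pair $(z_1, z_2) \in (\Z^{d'})^2$ that witnesses both $\micro_A(f)[x]$ and $\micro_A(f)[x+e_i]$ simultaneously, and then applying Lemma~\ref{microfacesloz} to both faces.

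First I would use the hypothesis that every row of $A$ contains a positive integer to select $z_1 \in \Z^{d'}$ such that $v := Az_1 - x$ satisfies $v \geq \cvec{1}$ (pick $z_1$ of the form $\sum_j M\,e_{c(j)}$ where $A[j,c(j)] > 0$ and $M$ is large). Then $v \geq \cvec{0}$ and also $v - e_i \geq \cvec{0}$, so that $x = Az_1 - v$ and $x + e_i = Az_1 - (v - e_i)$ are both admissible decompositions in the sense required by the definition of $\micro_A$. Next, choose $z_2 \in \Z^{d'}$ with $z_2 \leq z_1$ and $Az_2 \leq x - \cvec{1}$; the latter is also automatically $\leq (x+e_i) - \cvec{1}$, so the same $z_2$ works for both points. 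Set $a = \lbl(f,(z_2,z_1))$. By Lemma~\ref{microUnique}, the values $\micro_A(f)[x]$ and $\micro_A(f)[x+e_i]$ obtained from these choices are the correct ones (independent of witnesses).

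Now apply Lemma~\ref{microfacesloz} twice. For the top face $\topi_i = \faceat(\cvec{0}, -\cvec{1}+e_i)$ at $x$ (with the witness $v$),
\[
\topi_i(\micro_A(f)[x]) = \base_{n}\bigl(a, (-v,\,-v - \cvec{1} + e_i)\bigr)[2].
\]
For the bottom face $\boti_i = \faceat(-e_i, -\cvec{1}+e_i)$ at $x+e_i$ (with the witness $v - e_i$),
\[
\boti_i(\micro_A(f)[x+e_i]) = \base_{n}\bigl(a, (-(v-e_i) - e_i,\, -(v-e_i) - \cvec{1} + e_i)\bigr)[2] = \base_{n}\bigl(a, (-v,\,-v - \cvec{1} + e_i)\bigr)[2].
\]
These two expressions are identical, which establishes the required matching and hence $\micro_A(f) \in X_{n}$.

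The only mild obstacle is guaranteeing that a common pair $(z_1,z_2)$ exists; this is precisely where the standing assumption that every row of $A$ contains a positive integer is used, ensuring that $Az_1$ can be pushed arbitrarily far in the positive direction and $Az_2$ arbitrarily far in the negative direction in every coordinate. Once this is arranged, Lemma~\ref{microfacesloz} does all the algebraic work, and the coincidence of the two $\base_n$ expressions is immediate from the simple identity $-(v-e_i) - e_i = -v$.
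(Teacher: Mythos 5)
Your proof is correct and follows essentially the same route as the paper: both apply Lemma~\ref{microfacesloz} twice with a common witness pair $(z_1,z_2)$, using $v$ for $x$ and $v-e_i$ for $x+e_i$, and the equality of the two $\base_n$ expressions is the same algebraic identity. Your extra care in constructing $z_1,z_2$ explicitly from the row condition on $A$ is a detail the paper leaves implicit, but it is not a different argument.
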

\begin{proof}
We need to show that $\topi_i(\micro_A(f)[w])=\boti_i(\micro_A(f)[w+e_i])$ for any $w\in\Z^d$ and $1\leq i\leq d$. Let $z_1,z_2\in\Z^{d'}$ be such that Lemma~\ref{microfacesloz} can be applied at coordinates $w$ and $w+e_i$. Let $w=Az_1-v$ and $w+e_i=Az_1-u$, from which it follows that $-v+e_i=-u$. Denote $a=\lbl(f,(z_2,z_1))$. By applying Lemma~\ref{microfacesloz} two times we see that

\begin{flalign*}
&\topi_i(\micro_A(f)[w])=\base_{n}(a,(-v,-v-\cvec{1}+e_i))[2] \\
&=\base_{n}(a,((-v+e_i)-e_i,(-v+e_i)-e_i+(-\cvec{1}+e_i)))[2] \\
&=\base_{n}(a,(-u-e_i,-u-e_i+(-\cvec{1}+e_i)))[2] \\
&=\boti_i(\micro_A(f)[w+e_i]).
\end{flalign*}
\end{proof}

\begin{lemma}\label{macroedgeloz}
Let $f\in X_{n^A}$ and let $p,p'\in\Z^{d'}$. Then 
\[\lbl(\micro_A(f),(Ap,Ap'))=\lbl(f,(p,p'))\quad\mbox{and}\quad(Ap,Ap')\micro_A(f)=(p,p')f.\]
\end{lemma}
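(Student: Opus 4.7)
The two displayed equalities are equivalent since $\wgt_{n^A}(p')=m(n^A,p')=m(n,Ap')=\wgt_n(Ap')$, so the normalizing factors in the definitions of $\lbl$ on the two sides coincide. The plan is to prove the path-integral form $(Ap,Ap')\micro_A(f)=(p,p')f$, mirroring the strategy of Lemma~\ref{macroedgeSurj}.

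First I reduce to the edge case. Since $\micro_A(f)\in X_n$ by Theorem~\ref{microMatch}, path independence (Theorem~\ref{indPath}) lets me pick any path from $Ap$ to $Ap'$ in $\Z^d$ for computing $(Ap,Ap')\micro_A(f)$. I choose one that follows the images $A\tilde p_0,\dots,A\tilde p_k$ of a unit-step path from $p$ to $p'$ in $\Z^{d'}$, joining consecutive $A\tilde p_j$ and $A\tilde p_{j+1}$ by a unit-step subpath in $\Z^d$. Splitting both $(Ap,Ap')\micro_A(f)$ and $(p,p')f$ along this decomposition reduces the lemma to proving $(Ap,Ap+Ae_i)\micro_A(f)=(p,p+e_i)f$ for every $p\in\Z^{d'}$ and $1\leq i\leq d'$.

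For this edge case, fix a path $q_0=Ap,q_1,\dots,q_m=Ap+Ae_i$ of $m=\sum_k(Ae_i)[k]$ unit positive steps $q_{j+1}-q_j=e_{k_j}$ in $\Z^d$. Because each row of $A$ contains a positive integer, $A\cvec{1}\geq\cvec{1}$, and so the choice $z_1=p+e_i$, $z_2=p-\cvec{1}$ satisfies the hypotheses of Lemma~\ref{microfacesloz} simultaneously for every cube $\micro_A(f)[q_{j+1}]$ along the path, with $v_j:=A(p+e_i)-q_{j+1}\geq\cvec{0}$. Letting $a=\lbl(f,(p-\cvec{1},p+e_i))$, that lemma gives
\[\lbl(\micro_A(f),\{q_j,q_{j+1}\})=\base_n(a,(-v_j,-v_j-e_{k_j}))[2].\]
Summing the path integral $(Ap,Ap+Ae_i)\micro_A(f)=\sum_j m(n,q_{j+1})\lbl(\micro_A(f),\{q_j,q_{j+1}\})$ and factoring out $m(n,Ap+Ae_i)=m(n^A,p+e_i)$, the collected digits telescope via Lemma~\ref{prebasispath} applied to the decreasing sequence in $\Z^d$ going from $\cvec{0}$ down to $-Ae_i$ through the points $-v_j$, leaving $m(n^A,p+e_i)\cdot\base_n(a,(\cvec{0},-Ae_i))[2]$.

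Finally I identify $\base_n(a,(\cvec{0},-Ae_i))[2]$ with $\lbl(f,(p,p+e_i))$. By their defining digit inequalities, both $\base_n(a,(\cvec{0},-Ae_i))[2]$ and $\base_{n^A}(a,(-e_i,-e_i-\cvec{1}))[1]$ equal the unique residue of $a$ modulo $m(n,-Ae_i)=m(n^A,-e_i)$, hence they are equal. Applying Lemma~\ref{baselabel} to $f\in X_{n^A}$ with the decreasing triple $p+e_i,p,p-\cvec{1}$ in $\Z^{d'}$ identifies this residue as $\lbl(f,(p,p+e_i))$. Multiplying by the factored $m(n^A,p+e_i)$ then gives $(p,p+e_i)f$, closing the edge case. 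The main obstacle is the bookkeeping that matches the length-$(m+1)$ base-$n$ expansion arising from the unit-step path in $\Z^d$ with the length-$2$ base-$n^A$ expansion giving the single edge label in $f$; the modular-residue identification is exactly what bridges the two.
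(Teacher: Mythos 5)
Your proof is correct and follows essentially the same route as the paper's: you compute the path integral along a unit-step path in $\Z^d$ realizing a column of $A$ by applying Lemma~\ref{microfacesloz} to each microtile, telescope the resulting digits with Lemma~\ref{prebasispath}, and identify the outcome with the corresponding label of $f$ via Lemma~\ref{baselabel}. The only differences are cosmetic: the paper treats a general monotone increment $v\geq\cvec{0}$ at once and finishes via a common upper bound $p''$, whereas you reduce to single steps $e_i$ and use a residue-mod-$m(n^A,-e_i)$ identification in place of the paper's identity $\base_{n^A}(a,(v_i))=\base_{n}(a,(Av_i))$.
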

\begin{proof}
The two equalities are equivalent, so it suffices to prove the first one. We first show for $z_1\in\Z^{d'},v\in\Z^d$, $v\geq\cvec{0}$ that
\[\lbl(\micro_A(f),(A(z_1-v),Az))=\lbl(f,(z_1-v,z_1)).\]
We can represent $Av=\sum_{k=1}^m e_{i_k}$ for some $m\in\N$. Denote $v'_j=\sum_{k=1}^j e_{i_k}$ for $0\leq j\leq m$, so in particular $Av=v'_m$. Denote $s_i=\faceat(\cvec{0},-e_i)$ for any $1\leq i\leq m$. Choose $z_2\in\Z^{d'}$ so that Lemma~\ref{microfacesloz} can be applied to faces $s_{i_{j+1}}$ of the microtiles at coordinates of the form $Az_1-v'_j$. Let $a=\lbl(f,(z_2,z_1))$.

\begin{flalign*}
&\lbl(\micro_A(f),(A(z-v),Az))=\lbl(\micro_A(f),(Az-v'_m,Az-v'_0)) \\
\overset{L.~\ref{pathsum}}&{=}\sum_{j=0}^{m-1}m(n,-v'_j)\lbl(\micro_A(f),(Az-v'_{j+1},Az-v'_j)) \\
&=\sum_{j=0}^{m-1}m(n,-v'_j)s_{i_{j+1}}(\micro_A(f)[Az-v'_j]) \\
\overset{L.~\ref{microfacesloz}}&{=}\sum_{j=0}^{m-1}m(n,-v'_j)\base_{n}(a,(-v'_j,\underbrace{-v'_j-e_{i_{j+1}}}_{-v'_{j+1}}))[2] \\
\overset{L.~\ref{prebasispath}}&{=}\base_{n}(a,(-v'_0,-v'_m))[2]=\base_{n}(a,(\cvec{0},-Av))[2] \\
&=\base_{n^A}(\lbl(f,(z_2,z_1)),(\cvec{0},-v))[2]\overset{L.~\ref{baselabel}}{=}\lbl(f,(z_1-v,z_1)).
\end{flalign*}
On the last line we need that $(z_1,z_1-v,z_2)$ is a directive sequence, but this can be achieved by choosing $z_2$ suitably.

For the general case, let $p''\in\Z^d$ such that $p''\geq p$ and $p''\geq p'$. The previous part is applied in the following with the choices $z=p''$ and $v=p''-p$ or $v=p''-p'$.
\begin{flalign*}
&\lbl(\micro_A(f),(Ap,Ap')) \\
\overset{L.~\ref{pathsum}}&{=}m(n,A(p''-p'))\lbl(\micro_A(f),(Ap,Ap''))+\lbl(\micro_A(f),(Ap'',Ap')) \\
&=m(n^A,p''-p')\lbl(f,(p,p''))+\lbl(f,(p'',p'))\overset{L.~\ref{pathsum}}{=}\lbl(f,(p,p')).
\end{flalign*}
\end{proof}

Now we can show that the real number represented by a tessellation is preserved also by the microtile map $\micro_A$. 

\begin{proposition}\label{invtessreal}
Let $A$ be a $d\times d'$ natural number matrix all of whose rows contain a positive integer. For any $f\in X_{n^A}$ it holds that $\real(\micro_A(f))=\real(f)$.
\end{proposition}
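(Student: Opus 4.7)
The plan is to mirror the structure of the proof of Proposition~\ref{surjtessreal}, but with Lemma~\ref{macroedgeloz} replacing Lemma~\ref{macroedgeSurj}. The key observation is that the hypothesis on $A$ (every row contains a positive integer) guarantees $A\cvec{1}\gg\cvec{0}$, so $-A\cvec{1}\ll\cvec{0}$ and (in the non-trivial case where some $n[j]>1$) $\wgt_n(-A\cvec{1})>1$. This is exactly what is needed so that $\real_{-A\cvec{1}}(\micro_A(f))$ is an admissible instance of the $\real_{p,v}$ notation in Proposition~\ref{freepv}.

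First I would apply Lemma~\ref{macroedgeloz} with $p=-\cvec{i}$ and $p'=\cvec{i}$ for each $i\in\N$ to obtain
\[(-iA\cvec{1},iA\cvec{1})\micro_A(f)=(-A\cvec{i},A\cvec{i})\micro_A(f)=(-\cvec{i},\cvec{i})f.\]
Letting $i\to\infty$ yields $\real_{-A\cvec{1}}(\micro_A(f))=\real(f)$, since the left-hand side is by definition $\lim_{i\to\infty}(i(-A\cvec{1}),-i(-A\cvec{1}))\micro_A(f)$. This is the only content that really uses the microtile machinery, and it is essentially immediate from the already-proved lemma.

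Second, I would invoke Proposition~\ref{freepv} with $p=q=\cvec{0}$, $v=-A\cvec{1}$, $w=-\cvec{1}$ to obtain $\real(\micro_A(f))=\real_{-A\cvec{1}}(\micro_A(f))$, and concatenating the two equalities yields the desired $\real(\micro_A(f))=\real(f)$. Put together, the computation is just
\[\real(\micro_A(f))\overset{P.~\ref{freepv}}{=}\real_{-A\cvec{1}}(\micro_A(f))=\lim_{i\to\infty}(-iA\cvec{1},iA\cvec{1})\micro_A(f)\overset{L.~\ref{macroedgeloz}}{=}\lim_{i\to\infty}(-\cvec{i},\cvec{i})f=\real(f).\]

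The only delicate point is the applicability of Proposition~\ref{freepv}, which asks for finiteness of one of the two quantities being equated. If $\real(f)$ is finite, then so is $\real_{-A\cvec{1}}(\micro_A(f))$ by the computation above, and Proposition~\ref{freepv} applies. If $\real(f)=+\infty$, then by the same computation $\real_{-A\cvec{1}}(\micro_A(f))=+\infty$, and Lemma~\ref{intCutoff} (which says finiteness of $\integ_{p,v}$ is independent of the choice of $p$, $v$) shows that $\real(\micro_A(f))=+\infty$ as well, so the asserted equality still holds. I expect the finiteness bookkeeping via Lemma~\ref{intCutoff} to be the only minor technical obstacle; the main computation itself is a two-line manipulation.
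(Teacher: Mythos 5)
Your proposal is correct and follows essentially the same route as the paper's proof: the identical two-line computation combining Lemma~\ref{macroedgeloz} (applied to $p=-\cvec{i}$, $p'=\cvec{i}$) with Proposition~\ref{freepv}, and the same finiteness bookkeeping via Lemma~\ref{intCutoff} to cover the case where the represented number is infinite. The only point the paper makes explicit that you leave as a parenthetical is the degenerate case $\wgt_n(-\cvec{1})=\wgt_n(-A\cvec{1})=1$ (all $n[j]=1$), where Proposition~\ref{freepv} and Lemma~\ref{intCutoff} do not apply but both sides are trivially $0$; you should dispatch that case in one sentence rather than merely flag it.
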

\begin{proof}
We observe that $\wgt_n(-\cvec{1})>1$ if and only if $\wgt_n(-A\cvec{1})>1$. The case $\wgt_n(-\cvec{1})=\wgt_n(-A\cvec{1})=1$ is simple because then $\real(\micro_A(f))=\real(f)=0$.

Assume then that $\wgt_n(-\cvec{1})>1$ and $\wgt_n(-A\cvec{1})>1$. Because $-\cvec{1}\ll\cvec{0}$ and $-A\cvec{1}\ll\cvec{0}$, it follows from Lemma~\ref{intCutoff} that $\real(\micro_A(f))$ is finite if and only if $\real_{-A\cvec{1}}(\micro_A(f))$ is finite. If $\real(\micro_A(f))$ and $\real_{-A\cvec{1}}(\micro_A(f))$ are both infinite, then trivially $\real(\micro_A(f))=\real_{-A\cvec{1}}(\micro_A(f))$, and by Proposition~\ref{freepv} $\real(\micro_A(f))=\real_{-A\cvec{1}}(\micro_A(f))$ also holds if they are both finite. Therefore
\begin{flalign*}
&\real(\micro_A(f))=\real_{-A\cvec{1}}(\micro_A(f))=\lim_{i\to\infty}(-iA\cvec{1},iA\cvec{1})\micro_A(f) \\
&\overset{L.~\ref{macroedgeloz}}{=}\lim_{i\to\infty}(-\cvec{i},\cvec{i})f=\real(f).
\end{flalign*}
\end{proof}

\begin{theorem}\label{macromicroInv}
Let $A$ be a natural number matrix all of whose rows contain a positive integer. Then $\macro_{A,n}$ and $\micro_{A,n}$ are inverse maps of each other.
\end{theorem}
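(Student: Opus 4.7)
The plan is to verify the two equalities $\micro_A \circ \macro_A = \mathrm{id}_{X_n}$ and $\macro_A \circ \micro_A = \mathrm{id}_{X_{n^A}}$ cube-by-cube, and in both directions the computation reduces to combining the path-label preservation lemmas (Lemmas~\ref{macroedgeSurj} and~\ref{macroedgeloz}) with the diagonal-value identity (Lemma~\ref{diaglabelvalue}), which together say roughly that ``labels in the macro tessellation read off as labels in the original along the images under $A$''. The row hypothesis on $A$ is used only to guarantee $\micro_A$ is well-defined; once in place, nothing further is needed from it.

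For the easier direction, let $g \in X_{n^A}$ and $v \in \Z^{d'}$. Unwinding the definition,
\[
\macro_A(\micro_A(g))[v] \;=\; \cube_{n^A}\bigl(\lbl(\micro_A(g),(A(v-\cvec{1}),Av))\bigr).
\]
Lemma~\ref{macroedgeloz} applied to the pair $p=v-\cvec{1}$, $p'=v$ identifies the inner label with $\lbl(g,(v-\cvec{1},v))$, and Lemma~\ref{diaglabelvalue} identifies this in turn with $\val_{n^A}(g[v])$. Since $\cube_{n^A}$ and $\val_{n^A}$ are mutually inverse, this equals $g[v]$.

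For the other direction, fix $f \in X_n$ and $x \in \Z^d$, and pick $z_1, z_2 \in \Z^{d'}$ with $z_1 \geq z_2$, $x = Az_1 - v$ for some $v \geq \cvec{0}$, and $Az_2 \leq x - \cvec{1}$, exactly as in the definition of $\micro_A$. Set $a = \lbl(\macro_A(f),(z_2,z_1))$. By Lemma~\ref{macroedgeSurj}, $a = \lbl(f,(Az_2,Az_1))$. Now apply Lemma~\ref{baselabel} to the decreasing chain $Az_1 \geq Az_1 - v \geq Az_1 - v - \cvec{1} \geq Az_2$ (the last inequality is exactly $Az_2 \leq x-\cvec{1}$): it gives
\[
\base_n\bigl(a,(-v,\,-v-\cvec{1},\,Az_2-Az_1)\bigr)[2] \;=\; \lbl(f,(Az_1-v-\cvec{1},\,Az_1-v)) \;=\; \lbl(f,(x-\cvec{1},x)),
\]
and by Lemma~\ref{endscope} the left-hand side agrees with $\base_n(a,(-v,-v-\cvec{1}))[2]$. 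Hence
\[
\micro_A(\macro_A(f))[x] \;=\; \cube_n\bigl(\base_n(a,(-v,-v-\cvec{1}))[2]\bigr) \;=\; \cube_n\bigl(\lbl(f,(x-\cvec{1},x))\bigr) \;=\; \cube_n(\val_n(f[x])) \;=\; f[x],
\]
using Lemma~\ref{diaglabelvalue} for the second-to-last equality.

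The only point that requires care is that the auxiliary chain used to invoke Lemma~\ref{baselabel} be genuinely monotone decreasing; this holds precisely by the conditions baked into the definition of $\micro_A$, and Lemma~\ref{microUnique} already assures us that the choice of $z_1, z_2$ does not matter. I expect no real obstacle: the two key lemmas have been set up exactly so that $\macro_A$ and $\micro_A$ appear as mutually inverse relabelings of the edge-label data, and the result is essentially a bookkeeping check against Lemma~\ref{diaglabelvalue}.
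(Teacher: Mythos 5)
Your proposal is correct and follows essentially the same route as the paper: both directions are verified cube-by-cube by combining Lemma~\ref{macroedgeSurj} (resp.\ Lemma~\ref{macroedgeloz}) with Lemma~\ref{baselabel} and Lemma~\ref{diaglabelvalue}, exactly as in the paper's proof. Your extra invocation of Lemma~\ref{endscope} to truncate the directive sequence is harmless but unnecessary, since Lemma~\ref{baselabel} already allows $k'\leq k$ and so applies directly to the two-step sequence $(-v,-v-\cvec{1})$.
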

\begin{proof}
Let $f\in X_{n}$ and consider $x=Az_1-v$ with $z_1\in\Z^{d'}$, $v\in\Z^{d}$, $v\geq\cvec{0}$. Let $z_2\in\Z^{d'}$ satisfy $z_1\geq z_2$ and $x-\cvec{1}\geq Az_2$. Then
\begin{flalign*}
&\micro_A(\macro_A(f))[Az_1-v]=\cube_{n}(\base_{n}(\lbl(\macro_A(f),(z_2,z_1)),(-v,-v-\cvec{1}))[2]) \\
\overset{L.~\ref{macroedgeSurj}}&{=}\cube_{n}(\base_{n}(\lbl(f,(Az_2,Az_1)),(-v,-v-\cvec{1}))[2]) \\
\overset{L.~\ref{baselabel}}&{=}\cube_{n}(\lbl(f,(Az_1-v-\cvec{1},Az_1-v))) \\
\overset{L.~\ref{diaglabelvalue}}&{=}\cube_{n}(\val_{n}(f[Az_1-v]))=f[Az_1-v].
\end{flalign*}
For the other direction, let $f\in X_{n^A}$ and $z\in\Z^{d'}$. Then
\begin{flalign*}
&\macro_A(\micro_A(f))[z]=\cube_{n^A}(\lbl(\micro_A(f),(A(z-\cvec{1}),Az))) \\
&\overset{L.~\ref{macroedgeloz}}{=}\cube_{n^A}(\lbl(f,(z-\cvec{1},z)))\overset{L.~\ref{diaglabelvalue}}{=}\cube_{n^A}(\val_{n^A}(f[z]))=f[z].
\end{flalign*}
\end{proof}

As a corollary we can show that any bi-infinite sequence of cubes occurs on the main diagonal of precisely one valid tessellation.

\begin{corollary}\label{tessExist}
For every $x\in\digs_N^\Z$ there is a unique $f\in X_n$ such that $\val_n(f[\cvec{i}])=x[i]$ for all $i\in\Z$.
\end{corollary}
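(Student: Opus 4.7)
The plan is to apply Theorem~\ref{macromicroInv} with the $d\times 1$ natural number matrix $A=\cvec{1}$. First I would check that $n^A[1]=m(n,-Ae_1)=m(n,-\cvec{1})=N$, so $n^A=(N)$ is the one-dimensional prebasis whose single entry is $N$. As noted earlier in the paper, $X_{(N)}=T_{(N)}^{\Z}$ is the full shift on $T_{(N)}$, and $\val_{(N)}\colon T_{(N)}\to\digs_N$ is a bijection. Since the single row of $A$ contains a positive integer, Theorem~\ref{macromicroInv} supplies mutually inverse bijections $\macro_A\colon X_n\to X_{(N)}$ and $\micro_A\colon X_{(N)}\to X_n$.

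The next step is to identify what $\macro_A$ does along the diagonal. Using the definition of $\macro_A$ together with Lemma~\ref{diaglabelvalue} applied at the point $v\cvec{1}\in\Z^d$, I would verify that for every $f\in X_n$ and every $v\in\Z$,
\[\macro_A(f)[v]=\cube_{(N)}(\lbl(f,((v-1)\cvec{1},v\cvec{1})))=\cube_{(N)}(\val_n(f[v\cvec{1}])),\]
so that $\val_{(N)}(\macro_A(f)[v])=\val_n(f[v\cvec{1}])$. In words, $\macro_A$ simply reads off the sequence of diagonal valuations of $f$.

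For existence, given $x\in\digs_N^{\Z}$, I would define $g\in X_{(N)}$ by $g[v]=\cube_{(N)}(x[v])$ and set $f=\micro_A(g)$. Then Theorem~\ref{macromicroInv} gives $\macro_A(f)=g$, so $\val_n(f[v\cvec{1}])=x[v]$ for all $v\in\Z$. For uniqueness, any $f'\in X_n$ satisfying the same diagonal conditions has $\macro_A(f')=g$ by the identity above, and therefore $f'=\micro_A(\macro_A(f'))=\micro_A(g)=f$.

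No genuine obstacle arises: essentially all the work has already been done in Theorem~\ref{macromicroInv}, and the remaining task is to spot that the specialization $A=\cvec{1}$ collapses the $d$-dimensional data into the main diagonal and puts the macrotile set in bijection with $\digs_N$ via $\val_{(N)}$.
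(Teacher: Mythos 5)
Your proposal is correct and follows essentially the same route as the paper: specialize to the $d\times 1$ all-ones matrix $A$, note $n^A=(N)$ and $X_{(N)}=T_{(N)}^\Z$, and reduce both existence and uniqueness to the bijectivity of $\macro_A$ from Theorem~\ref{macromicroInv}. The only cosmetic difference is that you extract the identity $\val_{(N)}(\macro_A(f)[v])=\val_n(f[v\cvec{1}])$ directly from the definition of $\macro_A$ together with Lemma~\ref{diaglabelvalue}, while the paper routes the same computation through Lemma~\ref{macroedgeSurj}.
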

\begin{proof}
Let $A$ be the $d\times 1$ matrix with all entries equal to $1$, so $n^A=(N)$ and there is a $g\in X_{n^A}=X_{(N)}=T_{(N)}^\Z$ such that $\val_{(N)}(g[i])=x[i]$ for all $i\in\Z$. For $f\in X_n$ the equality $\macro_A(f)=g$ holds if and only if
\begin{flalign*}
&\val_n(f)[\cvec{i}]=\lbl(f,(\cvec{i-1},\cvec{i}))=\lbl(f,(A(i-1),Ai))\overset{L.~\ref{macroedgeSurj}}{=}\lbl(\macro_A(f),(i-1,i)) \\
&=\val_{(N)}(\macro_A(f)[i])=\val_{(N)}(g[i])=x[i]\mbox{ for all }i\in\Z,
\end{flalign*}
so we need to show that there is a unique $f\in X_n$ such that $\macro_A(f)=g$. This follows because by Theorem~\ref{macromicroInv} the map $\macro_A$ is bijective.
\end{proof}

We will show that $\macro_A$ is surjective for a $d\times d'$ natural number matrix $A$ even without the assumption that all its rows contain a positive integer.

\begin{theorem}\label{macroSurj}
Let $A$ be a $d\times d'$ natural number matrix. Then $\macro_A$ is surjective.
\end{theorem}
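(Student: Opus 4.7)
The plan is to reduce the general case to the setting of Theorem~\ref{macromicroInv}. Let $I=\{i : A_{i,\cdot}=\cvec{0}\}$ be the set of zero rows of $A$, and form the augmented matrix $B=[A\mid C_I]$, where the added columns are the standard basis vectors $e_i\in\Z^d$ for $i\in I$. Then $B$ is a $d\times(d'+|I|)$ matrix in which every row contains a positive integer, so by Theorem~\ref{macromicroInv} the map $\macro_{B,n}$ is a bijection onto $X_{n^B}$. Let $C$ be the $(d'+|I|)\times d'$ matrix whose columns are $e_1,\dots,e_{d'}\in\Z^{d'+|I|}$ (so that $C$ has exactly $|I|$ zero rows, located at the bottom); then $BC=A$, and by Lemma~\ref{macroComp} we have $\macro_{A,n}=\macro_{C,n^B}\circ\macro_{B,n}$. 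Since $\macro_{B,n}$ is surjective, the problem reduces to showing that $\macro_{C,n^B}$ is surjective.

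I would handle the reduced problem by induction on the number $|I|$ of zero rows of $C$. The base case $|I|=0$ is Theorem~\ref{macromicroInv}. For the inductive step, I would factor
\[
C=\begin{pmatrix} I_{d'+|I|-1} \\ \cvec{0} \end{pmatrix}\begin{pmatrix} I_{d'} \\ \cvec{0} \end{pmatrix},
\]
so that the first factor has $|I|-1$ zero rows and the second has a single zero row; applying Lemma~\ref{macroComp} once more, both compositions are surjective by the inductive hypothesis together with the single-zero-row case. So the entire argument comes down to the key sub-case: the matrix $C_0=\binom{I_{d'}}{0}\in\N^{(d'+1)\times d'}$, for which I must show $\macro_{C_0}$ is surjective.

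For the key case, let $\tilde{n}=(\tilde{n}[1],\dots,\tilde{n}[d'+1])$ be the prebasis on which we work and set $\tilde{n}'=(\tilde{n}[1],\dots,\tilde{n}[d'])$, $K=\tilde{n}[d'+1]$, $N'=\prod_i \tilde{n}'[i]$. Given $g\in X_{\tilde{n}'}$ with main diagonal $(c_i)_i\in\digs_{N'}^\Z$, Remark~\ref{facesAreMulcubes} and Lemma~\ref{facelabelvalue} translate the desired equality $\macro_{C_0}(\tilde{g})=g$ into the condition
\[
\tilde{g}[(\cvec{i},0)]\bmod N'=c_i\qquad\text{for all }i\in\Z.
\]
I would construct $\tilde{g}$ layer by layer along the $e_{d'+1}$ axis: on the slice $z_{d'+1}=0$, write $\tilde{g}[(z',0)]=\val_{\tilde{n}'}(g[z'])+N'\cdot q(z')$ for an unknown $q(z')\in\digs_K$, and then show by a direct calculation, using the validity of $g$ in $X_{\tilde{n}'}$ together with the mixed-base identities from Section~\ref{mixedBaseSect}, that the $\tilde{n}$-validity conditions in directions $e_1,\dots,e_{d'}$ collapse into the recursion $q(z'+e_i)\equiv (c^{\mathrm{high}}_i(z')+\tilde{n}'[i]q(z'))\pmod K$, whose commutativity across different $i$'s is automatic from $g$'s own validity. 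The remaining direction $e_{d'+1}$ is then filled in using Corollary~\ref{tessExist}, which determines the remainder of $\tilde{g}$ uniquely from the slice data.

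The hardest part will be verifying that the recursion for $q$ is self-consistent across different coordinate directions and at points where the slice and non-slice validity constraints interact; this is exactly where the base-representation lemmas (especially Lemmas~\ref{prescope}, \ref{endscope}, \ref{twostepbase}) and the path-integral machinery of Subsection~\ref{pathSubSect} are needed, as they guarantee that the various local compatibility conditions amount to the same single global condition inherited from $g\in X_{\tilde{n}'}$.
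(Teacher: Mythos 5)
Your reduction step is sound and is genuinely different from the paper's: you append the columns $e_i$ for the zero rows of $A$ to get an invertible macrotile map (Theorem~\ref{macromicroInv}), factor $A=BC$ with Lemma~\ref{macroComp}, and reduce to slice-type matrices $\binom{I}{0}$, peeling off one zero row at a time (you have the zero-row counts of the two factors swapped, but that is cosmetic). The paper instead composes with the all-ones $d'\times 1$ column to reduce to a $d\times 1$ matrix and then finishes analytically: the image of $\macro_A$ is closed by compactness and continuity, and it contains every tessellation representing an irrational number, because one can prescribe the main diagonal freely (Corollary~\ref{tessExist}), $\macro_A$ preserves $\real$ (Proposition~\ref{surjtessreal}), and irrationals have unique base-$N'$ expansions. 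Your slice-level recursion for $q$ is also essentially right: the matching condition in direction $e_i$ does reduce to $q(z'+e_i)\equiv h_i(z')+\tilde n'[i]q(z')\pmod K$, and the square-consistency $h_j(z'+e_i)+\tilde n'[j]h_i(z')=h_i(z'+e_j)+\tilde n'[i]h_j(z')$ follows from Lemma~\ref{switch}/Theorem~\ref{indPath} applied to $g$, so a valid slice with the required values mod $N'$ can indeed be built.

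The genuine gap is the last step. Corollary~\ref{tessExist} reconstructs a tessellation from the cubes on its \emph{main diagonal} $(\cvec{i})_{i\in\Z}$; it says nothing about extending a codimension-one slice. Your slice $\{z_{d'+1}=0\}$ meets the main diagonal of $\tilde g$ only at the origin, so the slice data neither determines the diagonal nor is an admissible input to that corollary, and no result proved before Theorem~\ref{macroSurj} asserts that a valid $d'$-dimensional slice extends to a valid $(d'+1)$-dimensional tessellation. That extension statement is essentially equivalent to the surjectivity of $\macro_{C_0}$ you are trying to prove (it amounts to performing multiplication and division by $K=\tilde n[d'+1]$ on bi-infinite configurations that need not represent finite reals), so as written the proposal leaves the core difficulty open, or begs the question. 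One way to close it within your framework is to drop the explicit slice construction in the key case and argue as the paper does: $\macro_{C_0}$ is continuous on a compact space, so its image is closed, and it contains every $g$ with $\real(g)$ irrational, since you can choose $\tilde g$ by prescribing its main diagonal via Corollary~\ref{tessExist} so that $\real(\tilde g)=\real(g)$ and then invoke Proposition~\ref{surjtessreal} together with uniqueness of irrational base-$N'$ expansions.
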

\begin{proof}

We have $\macro_A=\macro_{A,n}$ for a prebasis $n$. Let $B$ be a $d'\times 1$ matrix with all entries equal to $1$. By Theorem~\ref{macromicroInv} the map $\macro_{B,n^A}$ is bijective, so $\macro_{A,n}$ is surjective if and only if $\macro_{AB,n}=\macro_{B,n^A}\circ\macro_{A,n}$ is surjective. Since $AB$ is a $d\times 1$ matrix, by rewriting $AB$ as $A$ we may assume without loss of generality that $A$ is a $d\times 1$ matrix. Then $n^A=(N')$ for some $N'\in\Zpos$ and $X_{n^A}=T_{(N')}^\Z$.

The case $N'=1$ is simple because then $X_{n^A}$ contains only one point, so we may assume that $N'>1$. By continuity of $\macro_A$ it is sufficient to show that the image of $\macro_A$ contains a dense subset of $X_{n^A}$. One dense subset is given by $g\in X_{n^A}$ such that $\real(g)=\xi$ for some $\xi\in\R\setminus\Q$, and for any other $g'\in X_{n^A}$ with $\real(g')=\xi$ it follows that $g'=g$, because irrational numbers have a unique representation in any integer base. Thus, for any fixed $\xi\in\R\setminus\Q$ and $g\in X_{n^A}$ with $\real(g)=\xi$ we will present an $f\in X_n$ such that $\macro_{A}(f)=g$. We choose $f\in X_n$ with the property that $\real(f)=\real(g)$. This is possible, because $\real(f)$ depends only on the choice of cubes on the main diagonal of $f$, and these can be chosen freely by Corollary~\ref{tessExist}. By Proposition~\ref{surjtessreal} we have $\real(\macro_{A}(f))=\real(f)=\real(g)=\xi$. But because for any $g'\in X_{n^A}$ the equality $\real(g')=\xi$ implies $g'=g$, it follows that $\macro_A(f)=g$.
\end{proof}

\begin{corollary}\label{macroFact}
Let $A$ be a $d\times d'$ natural number matrix. Then $(X_n,\sigma_{Az})$ has $(X_{n^A},\sigma_z)$ as a factor for every $z\in\Z^{d'}$ via $\macro_A$. If every row of $A$ contains a positive number, this is a conjugacy.
\end{corollary}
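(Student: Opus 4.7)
The plan is to verify that $\macro_A$ satisfies all requirements of a factor map: continuity, surjectivity, and the equivariance condition $\macro_A\circ\sigma_{Az}=\sigma_z\circ\macro_A$. Continuity is immediate from the definition of $\macro_A$, since $\macro_A(f)[v]$ depends only on the finitely many cubes of $f$ in the parallelepiped with corners at $A(v-\cvec{1})$ and $Av$ (this is visible from the formula $\macro_A(f)[v]=\cube_{n^A}(\lbl(f,(A(v-\cvec{1}),Av)))$ together with Lemma~\ref{labelIsBaseexp}, which expresses such labels as finite sums over cubes in the relevant region). Surjectivity is exactly Theorem~\ref{macroSurj}, which has just been established.

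The heart of the argument is equivariance. For $f\in X_n$ and $v\in\Z^{d'}$, I compute using the definition of the macrotile map together with Lemma~\ref{shiftlabel}:
\begin{flalign*}
\macro_A(\sigma_{Az}(f))[v]&=\cube_{n^A}(\lbl(\sigma_{Az}(f),(A(v-\cvec{1}),Av))) \\
&=\cube_{n^A}(\lbl(f,(A(v-\cvec{1})+Az,Av+Az))) \\
&=\cube_{n^A}(\lbl(f,(A(v+z-\cvec{1}),A(v+z)))) \\
&=\macro_A(f)[v+z]=\sigma_z(\macro_A(f))[v].
\end{flalign*}
This gives $\macro_A\circ\sigma_{Az}=\sigma_z\circ\macro_A$, so $\macro_A:(X_n,\sigma_{Az})\to(X_{n^A},\sigma_z)$ is a factor map.

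For the second statement, suppose every row of $A$ contains a positive integer. By Theorem~\ref{macromicroInv}, $\macro_A$ is bijective with continuous inverse $\micro_A$ (continuity of $\micro_A$ follows because $\micro_A(f)[x]$ is determined by $f$ at the finitely many points $z_1,z_2$ used in its definition, so the map is given by a local rule). Combined with the equivariance established above, this upgrades the factor map to a conjugacy.

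I do not anticipate a real obstacle: the nontrivial content has already been absorbed into Theorem~\ref{macroSurj} (for surjectivity in the general case) and Theorem~\ref{macromicroInv} (for bijectivity under the row condition); all that remains here is the bookkeeping of equivariance via Lemma~\ref{shiftlabel}. The only point requiring a tiny bit of care is noting that continuity of $\macro_A$ and $\micro_A$ is automatic from their definitions as local rules, which is needed in order to invoke the definition of a topological conjugacy as opposed to just a set-theoretic bijection.
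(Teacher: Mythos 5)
Your proposal is correct and follows essentially the same route as the paper: equivariance via Lemma~\ref{shiftlabel} (the same one-line computation, just written starting from the other side), surjectivity from Theorem~\ref{macroSurj}, and bijectivity under the row condition from Theorem~\ref{macromicroInv}. Your extra remarks on continuity are fine but are treated as implicit in the paper, since $\macro_A$ and $\micro_A$ are defined by local rules.
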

\begin{proof}
By Theorem~\ref{macroSurj} the map $\macro_A$ is surjective, so it remains to show that $\sigma_z\circ \macro_A=\macro_A\circ\; \sigma_{Az}$. Let $f\in X_{n}$ and $v\in\Z^{d'}$ be arbitrary. Then
\begin{flalign*}
&\sigma_z(\macro_A(f))[v]=\macro_A(f)[z+v]=\cube_{n^A}(\lbl(f,(A(z+v-\cvec{1}),A(z+v)))) \\
\overset{L.~\ref{shiftlabel}}&{=}\cube_{n^A}(\lbl(\sigma_{Az}(f),(A(v-\cvec{1}),Av)))=\macro_A(\sigma_{Az}(f))[v].
\end{flalign*}
In the case when every row of $A$ contains a positive number, the map $\macro_A$ is bijective by Theorem~\ref{macromicroInv}.
\end{proof}

\section{Multiplication automata}\label{CASect}

\subsection{Preliminaries on multiplication automata}\label{CAPre}

Let $\alp$ be an alphabet. Concatenations of symbols from $\alp$ are called words, and for $k\in\Zpos$, the set of words of length $k$ is denoted by $\alp^k$. Elements of the one-dimensional subshift $\alp^\Z$ are bi-infinite sequences that are called configurations. As usual, the value of $x\in\alp^\Z$ at a coordinate $i\in\Z$ is denoted by $x[i]$. More generally, given an interval $I=[i,j]$ with $i\leq j\in\Z$, $x$ has a finite subword denoted by $x[I]=x[i,j]=x[i]x[i+1]\cdots x[j]\in\alp^{j-i+1}$.

\begin{definition}
Let $\alp$ be an alphabet. We say that a map $F:\alp^\Z\to \alp^\Z$ is a \emph{cellular automaton} if for some $m\leq n\in\N$ there exists a map $f:\alp^{m-n+1}\to \alp$ (a so-called $(m,n)$ local rule for $F$) such that $F(x)[i]=f(x[i+m,i+n])$ for $i\in\Z$. $(\digs^\Z,F)$ is a dynamical system.
\end{definition}

Bijective cellular automata are often called reversible, because their inverse maps are also cellular automata.

For a CA $F:\alp^\Z\to \alp^\Z$, a configuration $x\in X$ and an interval $I=[i,j]$ with $i\leq j\in\Z$, the $I$-trace of $x$ (with respect to $F$) is the sequence $\tr_{F,I}(x)$ over the alphabet $\alp^{j-i+1}$ (i.e. the symbols of the alphabet are words over $\alp$) defined by $\tr_{F,I}(x)[t]=F^t(x)[I]$ for $t\in\N$. If $I=\{i\}$ is the degenerate interval, we may write $\tr_{F,i}(x)$ and if $i=0$, we may write $\tr_F(x)$. If the CA $F$ is clear from the context, we may write $\tr_I(x)$. The $I$-trace subshift of $F$ is $(\trsh_I(F),\sigma)$, where
\[\trsh_I(F)=\tr_{F,I}(\alp^\Z)\subseteq (\alp^{j-i+1})^\N.\]
It is easy to see that $\trsh_{[i,j]}(F)=\trsh_{[i-j,0]}(F)$, and therefore the set of all trace subshifts satisfies
\[\{(\trsh_{[i,j]}(F),\sigma)\mid i\leq j\in\Z\}=\{(\trsh_{[-(k-1),0]},\sigma)\mid k\in\Zpos\}.\]
We call $(\trsh_{[-(k-1),0]},\sigma)$ the width $k$ trace subshift of $F$.

For examining multiplication CA it is convenient to define
\begin{flalign*}
&\num{\digs_N}=\{x\in\digs_N^\Z\mid \exists j\in\Z: x[i]=0\mbox{ for }i<j\} \\
&\fin{\digs_N}=\{x\in\digs_N^\Z\mid x[i]\neq 0\mbox{ for finitely many }i\in\Z\}\subseteq \num{\digs_N}
\end{flalign*}

The elements of $\num{\digs_N}$ are analogous to the usual base-$N$ representations of positive numbers, where there may be infinitely many digits to the right of the decimal point but always a finite number of digits to the left of the decimal point (and then the representation can be extended to a bi-infinite sequence by adding an infinite sequence of zeroes to the left end). The elements of $\fin{\digs_N}$ correspond to numbers whose representations are finite to both directions.

\begin{definition}
Let $N>1$ be an integer. If $\xi\geq0$ is a real number and $\xi=\sum_{i=-\infty}^{\infty}{\xi_i N^{i}}$ is the unique base-$N$ expansion of $\xi$ such that $\xi_i\neq N-1$ for infinitely many $i<0$, we define $\config_N(\xi)\in \num{\digs_N}$ by
\[\config_N(\xi)[i]=\xi_{-i}\]
for all $i\in\Z$. In reverse, for $x\in\digs_N^\Z$ we define
\[\real_N(x)=\sum_{i=-\infty}^{\infty}{x[-i] N^{i}}.\]
Clearly $\real_N(\config_N(\xi))=\xi$ and $\config_N(\real_N(x))=x$ for every $\xi\geq0$ and every $x\in\num{\digs_N}$ such that $x[i]\neq N-1$ for infinitely many $i>0$.

Additionally, in the special case $N=1$ we define $\config_1:\{0\}\to\digs_1^\Z$ and $\real_1:\digs_1^\Z\to\{0\}$ in the only possible way.
\end{definition}

\begin{definition}[Multiplication CA]
Let $N\in\Zpos$. For $p\in\Zpos$ dividing $N$ we define a $(0,1)$ local rule $\mul_{p,N}:\digs_{N}\times \digs_{N}\to \digs_{N}$ for a CA $\Mul_{p,N}$ as follows. Let $q\in\Zpos$ be such that $N=pq$. The numbers $a,b\in \digs_{N}$ are represented as $a=a_1q+a_0$ and $b=b_1q+b_0$, where $a_0,b_0\in \digs_q$ and $a_1,b_1\in \digs_p$: such representations always exist and they are unique. Then
\[\mul_{p,N}(a,b)=\mul_{p,pq}(a_1q+a_0,b_1q+b_0)=a_0p+b_1.\]
\end{definition}

It can be shown that the map $\mul_{p,N}$ performs multiplication by $p$ in base $N$ in the sense that $\Mul_{p,n}(\fin{\digs_N})\subseteq\fin{\digs_N}$, $\Mul_{p,N}(\num{\digs_N})\subseteq\num{\digs_N}$ and $\real_N(\Mul_{p,N}(x))=p\real_N(x))$ for $x\in\num{\digs_N}$. To put it shortly, this is because $\mul_{p,N}$ encodes the usual algorithm for long multiplication by $p$ in base $N$ (for more details, see e.g.~\cite{Kari12a,Kop21trace}). All these maps commute because
\[\Mul_{p_1,N}(\Mul_{p_2,N}(x))=\config_N(p_1p_2\real_N(x)))=\Mul_{p_2,N}(\Mul_{p_1,N}(x))\]
for $p_1,p_2$ dividing $N$ and for $x$ in the dense set $\fin{\digs_N}$. They are reversible, because for $pq=N$
\[\mul_{p,N}(\mul_{q,N}(x))=\config_N(N\real_N(x))=\sigma(x)\]
for $x$ in the dense set $\fin{\digs_N}$ and $\sigma$ is reversible. Whenever $\alpha=p/q$, where $p$ and $q$ are products of factors $p_i$ and $q_i$ of $N$, we may define $\Mul_{\alpha,N}$ as the composition of corresponding $\Mul_{p_i,N}$ and $\Mul_{q_i,N}^{-1}$. The map $\Mul_{\alpha,N}$ does not depend on the choice of the composition into $p_i$ and $q_i$ by arguments similar to those above.

\subsection{The connection between multiplication cube tessellations and multiplication automata}\label{WangCASubSect}

Throughout let $n$ be a prebasis and let $N=\prod_{i=1}^d n[i]$. From any configuration $x\in\digs_N^\Z$ one can form a tessellation $\tess_{n}(x)\in X_{n}$ as follows: for any $z\in\Z^d$ we define $\alpha(z,n)=\prod_{i=1}^d n[i]^{z[i]}$ and define
\[\tess_{n}(x)[z]=\cube_{n}(\Mul_{\alpha(z,n),N}(x)[0])\]
(one still has to verify that this is indeed a valid tiling, which we will do in Lemma~\ref{validTess}). On the other hand, from any tessellation $f\in X_{n}$ one can extract a configuration $\diag_{n}(f)\in\digs_N^\Z$ by looking at the tiles on the main diagonal: let 
\[\diag_{n}(f)[i]=\val_{n}(f[\cvec{i}])\mbox{ for }i\in\Z.\]

\begin{lemma}\label{validTess}
For every $x\in\digs_N^\Z$ it holds that $\tess_{n}(x)\in X_{n}$.
\end{lemma}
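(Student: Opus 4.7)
The plan is to verify the matching condition $\topi_i(\tess_n(x)[z])=\boti_i(\tess_n(x)[z+e_i])$ for all $z\in\Z^d$ and $1\leq i\leq d$ by direct computation, reducing it to an identity about the local rule $\mul_{n[i],N}$.

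First, I would recall the explicit formulas derived immediately after the definition of $\cube_n$: writing $q=N/n[i]$, one has, for any $a\in\digs_N$, the equalities $\boti_i(\cube_n(a))=\lfloor a/n[i]\rfloor$ and $\topi_i(\cube_n(a))=a\bmod q$. Next, I would use the compositional structure of multiplication automata from Subsection~\ref{CAPre}: since $\alpha(z+e_i,n)=n[i]\cdot\alpha(z,n)$, setting $y=\Mul_{\alpha(z,n),N}(x)$ gives $\tess_n(x)[z]=\cube_n(y[0])$ and $\tess_n(x)[z+e_i]=\cube_n(\Mul_{n[i],N}(y)[0])$.

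Finally, applying the local rule at position $0$ yields $\Mul_{n[i],N}(y)[0]=\mul_{n[i],N}(y[0],y[1])=(y[0]\bmod q)\cdot n[i]+\lfloor y[1]/q\rfloor$, and the bound $y[1]<N=qn[i]$ guarantees $\lfloor y[1]/q\rfloor<n[i]$. Dividing the right-hand side by $n[i]$ and taking the floor therefore returns $y[0]\bmod q$, so $\boti_i(\cube_n(\Mul_{n[i],N}(y)[0]))=y[0]\bmod q=\topi_i(\cube_n(y[0]))$, which is precisely the matching condition. I do not anticipate a serious obstacle; the only point worth a remark is that $\Mul_{\alpha(z,n),N}$ is indeed defined even when $z$ has negative coordinates, since $\alpha(z,n)$ is then a ratio whose numerator and denominator are both products of factors of $N$, matching exactly the admissibility class of two-sided multiplication automata described at the end of Subsection~\ref{CAPre}.
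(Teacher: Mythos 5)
Your proposal is correct and follows essentially the same route as the paper: reduce the matching condition to the local rule $\mul_{n[i],N}$ applied to $y=\Mul_{\alpha(z,n),N}(x)$ at position $0$, using the explicit formulas $\boti_i(\cube_n(a))=\lfloor a/n[i]\rfloor$ and $\topi_i(\cube_n(a))=a\bmod (N/n[i])$ together with the factorization $\Mul_{\alpha(z+e_i,n),N}=\Mul_{n[i],N}\circ\Mul_{\alpha(z,n),N}$. The paper phrases the same computation with digit decompositions $y[0]=a_1q+a_0$ rather than floor/mod notation, so there is no substantive difference.
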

\begin{proof}
Let $z\in\Z^d$ and $1\leq i\leq d$ be arbitrary. We need to show that $\topi_i(\tess_{n}(x)[z])=\boti_i(\tess_{n}(x)[z+e_i])$. Let $y=\Mul_{\alpha(z,n),N}(x)$. By writing $y[0]=a_1\prod_{j\neq i}n[j]+a_0$ for $0\leq a_0<\prod_{j\neq i}n[i]$ and $0\leq a_1<n[i]$ we see that $\Mul_{n[i],N}(y)[0]=a_0n[i]+a_1'$ for some $0\leq a_1'<n[i]$. Then
\begin{flalign*}
&\topi_i(\tess_{n}(x)[z])=\topi_i(\cube_{n}(\Mul_{\alpha(z,n),N}(x)[0])) \\
&=\topi_i(\cube_{n}(y[0]))=\topi_i(\cube_{n}(a_1\prod_{j\neq i}n[j]+a_0))=a_0\quad\mbox{and} \\
&\boti_i(\tess_{n}(x)[z+e_i])=\boti_i(\cube_{n}(\Mul_{\alpha(z+e_i,n),N}(x)[0])) \\
&=\boti_i(\cube_{n}(\Mul_{n[i],N}(y)[0]))=\boti_i(\cube_{n}(a_0n[i]+a_1'))=a_0.
\end{flalign*}
\end{proof}

\begin{theorem}\label{tessFactor}
For any $z\in\Z^d$ it holds that $\sigma_z{\circ} \tess_{n}=\tess_{n}{\circ}\;\Mul_{\alpha(z,n),N}$.
\end{theorem}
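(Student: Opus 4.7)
The plan is to verify the equivariance condition pointwise by unfolding the definitions of $\tess_n$ and $\sigma_z$, and then reduce the equality to the multiplicativity of the multiplication automata family $\{\Mul_{\alpha,N}\}_\alpha$ that was already established in Subsection~\ref{CAPre}.

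First, I would observe that $\alpha(\cdot,n):\Z^d\to\Q_{>0}$ is a homomorphism from the additive group $(\Z^d,+)$ to the multiplicative group, because
\[\alpha(z'+z,n)=\prod_{i=1}^d n[i]^{(z'+z)[i]}=\prod_{i=1}^d n[i]^{z'[i]}\cdot\prod_{i=1}^d n[i]^{z[i]}=\alpha(z',n)\alpha(z,n).\]
Moreover, because every $n[i]$ is a factor of $N$, each $\alpha(z,n)$ is a rational number whose numerator and denominator are products of factors of $N$, so $\Mul_{\alpha(z,n),N}$ is defined.

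Second, I would invoke the multiplicativity recalled at the end of Subsection~\ref{CAPre}: for any two rationals $\alpha,\beta$ of the appropriate form, $\Mul_{\alpha\beta,N}=\Mul_{\alpha,N}\circ\Mul_{\beta,N}$. Combined with the previous paragraph this gives
\[\Mul_{\alpha(z'+z,n),N}(x)=\Mul_{\alpha(z',n),N}\bigl(\Mul_{\alpha(z,n),N}(x)\bigr)\]
for every $x\in\digs_N^\Z$ and every $z,z'\in\Z^d$.

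Finally, I would compute both sides of the claimed identity at an arbitrary $z'\in\Z^d$. For the left-hand side, by the definitions of $\sigma_z$ and $\tess_n$,
\[\sigma_z(\tess_n(x))[z']=\tess_n(x)[z'+z]=\cube_n\bigl(\Mul_{\alpha(z'+z,n),N}(x)[0]\bigr),\]
and for the right-hand side,
\[\tess_n\bigl(\Mul_{\alpha(z,n),N}(x)\bigr)[z']=\cube_n\bigl(\Mul_{\alpha(z',n),N}(\Mul_{\alpha(z,n),N}(x))[0]\bigr).\]
The two are equal by the multiplicativity identity above, completing the proof.

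The only potentially nontrivial step is the use of multiplicativity $\Mul_{\alpha\beta,N}=\Mul_{\alpha,N}\circ\Mul_{\beta,N}$, which is already justified in the preliminaries (via density of $\fin{\digs_N}$, the equality $\real_N(\Mul_{p,N}(x))=p\real_N(x)$, and reversibility). So no genuine obstacle arises; the result is essentially a packaging of the group action of $(\Z^d,+)$ on configurations via $z\mapsto\Mul_{\alpha(z,n),N}$ through the map $\tess_n$.
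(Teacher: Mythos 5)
Your proof is correct and follows essentially the same route as the paper: the paper's proof is exactly the pointwise computation in your final step, with the homomorphism property $\alpha(z'+z,n)=\alpha(z',n)\alpha(z,n)$ and the multiplicativity of the $\Mul_{\cdot,N}$ family (from Subsection~\ref{CAPre}) used implicitly in the middle equality. Your version simply makes those two ingredients explicit.
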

\begin{proof}
Let $x\in\digs_N^\Z$ and $z'\in\Z^d$ be arbitrary. Then
\begin{flalign*}
&\sigma_z(\tess_{n}(x))[z']=\tess_{n}(x)[z'+z]=\cube_{n}(\Mul_{\alpha(z'+z,n),N}(x)[0]) \\
&=\cube_{n}(\Mul_{\alpha(z',n),N}(\Mul_{\alpha(z,n),N}(x))[0])=\tess_{n}(\Mul_{\alpha(z,n),N}(x))[z'].
\end{flalign*}
\end{proof}

We define ``partial'' shifts on macrotilings by conjugating to a microtiling. Let $A$ be a $d\times d'$ natural number matrix all of whose rows contain a positive integer and let $N'=\prod_{i=1}^{d'} n^A[i]$. For any shift map $\sigma_z:X_{n}\to Z_{n}$ with $z\in\Z^d$ define $\sigma_{A,z}:X_{n^A}\to X_{n^A}$ by $\sigma_{A,z}=\macro_A\circ\;\sigma_z\circ\micro_A$.

\begin{lemma}\label{diagFactor}
For any $z\in\Z^d$ it holds that $\Mul_{\alpha(z,n),N'}{\circ}\diag_{n^A}=\diag_{n^A}{\circ}\;\sigma_{A,z}$. In the special case $A=I$ it holds that $\Mul_{\alpha(z,n),N}{\circ}\diag_{n}=\diag_{n}{\circ}\;\sigma_z$
\end{lemma}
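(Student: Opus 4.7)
The plan is to handle the special case $A = I$ first, which follows directly from Theorem~\ref{tessFactor} together with the bijectivity of $\tess_n$, and then extend to general $A$ by continuity on a dense subset. For $A = I$, Lemma~\ref{diaglabelvalue} yields $\macro_I(f)[v] = \cube_n(\lbl(f,(v-\cvec{1},v))) = \cube_n(\val_n(f[v])) = f[v]$, so $\macro_I = \mathrm{id}$ and analogously $\micro_I = \mathrm{id}$; hence $\sigma_{I,z} = \sigma_z$. The maps $\diag_n$ and $\tess_n$ are mutually inverse: a direct computation gives
\[
\diag_n(\tess_n(x))[i] = \val_n(\cube_n(\Mul_{\alpha(\cvec{i},n),N}(x)[0])) = \Mul_{N^i,N}(x)[0] = \sigma^i(x)[0] = x[i],
\]
using that $\Mul_{N,N}$ coincides with $\sigma$ on the dense set $\fin{\digs_N}$ and both are continuous; and $\diag_n$ is surjective by Corollary~\ref{tessExist}, so it is bijective with inverse $\tess_n$. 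Applying $\diag_n$ to the left of $\sigma_z \circ \tess_n = \tess_n \circ \Mul_{\alpha(z,n),N}$ from Theorem~\ref{tessFactor} yields the special case.

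For the general case, one first checks that $\Mul_{\alpha(z,n),N'}$ is defined: writing $\alpha(z,n) = \prod_i n[i]^{z[i]}$ as a quotient of products of the $n[i]$, it suffices that every prime factor of each $n[k]$ divides $N'$, and this holds because $N' = \wgt_n(-A\cvec{1}) = \prod_k n[k]^{(A\cvec{1})[k]}$ with $(A\cvec{1})[k] > 0$ for every $k$ by hypothesis on $A$. Both sides of the claimed equation are continuous maps $\digs_{N'}^\Z \to \digs_{N'}^\Z$: each of $\micro_A$, $\sigma_z$, $\macro_A$, $\diag_{n^A}$ is locally determined, and $\Mul_{\alpha(z,n),N'}$ is a cellular automaton. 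It therefore suffices to check agreement on the dense subset $D = \{x \in \digs_{N'}^\Z : \real_{N'}(x) \text{ is a positive irrational}\}$, which is dense since any cylinder can be completed by adding zeros to the left and the base-$N'$ expansion of an irrational to the right.

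For $x \in D$, set $\xi = \real_{N'}(x)$ and $g = \tess_{n^A}(x)$, so $\diag_{n^A}(g) = x$ and $\real(g) = \sum_i (N')^i x[-i] = \xi < \infty$. Chaining Propositions~\ref{invtessreal},~\ref{infShiftMul}, and~\ref{surjtessreal} gives
\[
\real(\sigma_{A,z}(g)) = \real(\macro_A(\sigma_z(\micro_A(g)))) = \alpha(z,n)\,\xi,
\]
using $\wgt_n(-z) = \alpha(z,n)$. Finiteness of $\alpha(z,n)\xi$ forces $y := \diag_{n^A}(\sigma_{A,z}(g))$ to lie in $\num{\digs_{N'}}$ with $\real_{N'}(y) = \alpha(z,n)\xi$, while $\Mul_{\alpha(z,n),N'}(x) \in \num{\digs_{N'}}$ also represents $\alpha(z,n)\xi$; since $\alpha(z,n)$ is a positive rational and $\xi$ irrational, $\alpha(z,n)\xi$ is irrational and has a unique representation in $\num{\digs_{N'}}$, so the two configurations coincide. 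The main obstacle is tracking the finiteness hypotheses of Propositions~\ref{infShiftMul} and~\ref{surjtessreal} through the chain $g \mapsto \micro_A(g) \mapsto \sigma_z(\micro_A(g)) \mapsto \macro_A(\sigma_z(\micro_A(g)))$; restricting to $x \in D$ makes this automatic since $\xi$ is finite from the outset and $\real$ is preserved at each step.
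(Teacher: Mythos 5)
Your proof is correct, but it takes a genuinely different route from the paper's. The paper argues locally: after precomposing with the bijection $\macro_A$ it reduces to the generators $z=e_k$, and the identity is verified by computing three labels $a,b,c$ in the underlying tessellation and checking $c=a_0n[k]+b_1$, i.e.\ the local rule $\mul_{n[k],N'}$, via Lemmas~\ref{baselabel} and~\ref{pathsum}. You instead argue globally through represented real numbers: both sides scale $\real$ by $\alpha(z,n)$ (Propositions~\ref{invtessreal}, \ref{infShiftMul}, \ref{surjtessreal}), on configurations representing positive irrationals the uniqueness of base-$N'$ expansions forces agreement, and continuity plus density finishes the argument --- essentially the same trick the paper itself uses for Theorem~\ref{macroSurj}. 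A few points of care: (i) you rightly avoid the circularity of invoking Theorem~\ref{tessdiag} (whose proof in the paper uses this very lemma) by deriving $\tess_n\circ\diag_n=\mathrm{id}$ from $\diag_n\circ\tess_n=\mathrm{id}$ together with Corollary~\ref{tessExist}; note however that what you need from that corollary is the uniqueness part (injectivity of $\diag_n$), not surjectivity as you wrote. (ii) The two sides of the claimed identity are maps on $X_{n^A}$, not on $\digs_{N'}^\Z$; this is harmless because $\tess_{n^A}$ is a homeomorphism, so $\tess_{n^A}(D)$ is dense in $X_{n^A}$ and checking the identity at $g=\tess_{n^A}(x)$ for $x\in D$ --- which is what you actually do --- suffices. (iii) Your use of Proposition~\ref{surjtessreal} and the density of $D$ tacitly assumes $N'>1$; the excluded case $N'=1$ (all $n[k]=1$) is trivial since both spaces are singletons, but it should be noted, and the scaling property of $\Mul_{\alpha,N'}$ for rational $\alpha$ on $\num{\digs_{N'}}$ deserves a one-line reduction to the integer case from the preliminaries. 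In exchange for this density/continuity and finiteness bookkeeping, your approach recycles machinery already developed and avoids the digit-level computation; the paper's computation is uniform (no degenerate or irrationality considerations) and makes the carry mechanism of $\mul_{n[k],N'}$ explicit.
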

\begin{proof}
First observe that $\Mul_{\alpha(z,n),N'}$ always exists, because every factor of $N$ is a factor of $N'$. It is sufficient to show for any $1\leq k\leq d$ that $\Mul_{n[k],N'}\circ\diag_{n^A}=\diag_{n^A}\circ\;\sigma_{A,e_k}$. After composing by $\macro_A$ on the right, we need to show for all $f\in X_{n}$ and $i\in\Z$ that $\Mul_{n[k],N'}(\diag_{n^A}(\macro_A(f)))[i]=\diag_{n^A}(\macro_A(\sigma_{e_k}(f)))[i]$. We compute
\begin{flalign*}
&\Mul_{n[k],N'}(\diag_{n^A}(\macro_A(f)))[i]=\mul_{n[k],N'}(\diag_{n^A}(\macro_A(f))[i],\diag_{n^A}(\macro_A(f))[i+1]) \\
&=\mul_{n[k],N'}(\underbrace{\lbl(f,(A(\cvec{i-1}),A\cvec{i}))}_{a},\underbrace{\lbl(f,(A\cvec{i},A(\cvec{i+1})))}_{b})
\end{flalign*}
and
\begin{flalign*}
&\diag_{n^A}(\macro_A(\sigma_{e_k}(f)))[i]=\val_{n^A}(\macro_A(\sigma_{e_k}(f)))[\cvec{i}] \\
&=\lbl(\sigma_{e_i}(f),(A(\cvec{i-1}),A\cvec{i}))=\underbrace{\lbl(f,(A(\cvec{i-1})+e_i,A\cvec{i}+e_i))}_{c},
\end{flalign*}
where $a,b,c<N'=m(n,-A\cvec{1})$. We need to show that $\mul_{n[k],N'}(a,b)=c$. After defining $a_0,a_1,b_0,b_1$ by
\begin{flalign*}
&\base(a,(N'/n[k]))=\base_{n}(a,(-A\cvec{1}+e_k,-A\cvec{1}))[0,1] \\
&=\base_{n}(a,((A(\cvec{i-1})+e_k)-A\cvec{i},A(\cvec{i-1})-A\cvec{i}))[0,1]=(a_0,a_1) \\
&\base(b,(N'/n[k]))=\base_{n}(b,(-A\cvec{1}+e_k,-A\cvec{1}))[0,1] \\
&=\base_{n}(b,((A\cvec{i}+e_k)-A(\cvec{i+1}),A\cvec{i}-A(\cvec{i+1})))[0,1]=(b_0,b_1),
\end{flalign*}
that amounts to showing that $c=a_0n[k]+b_1$. By Lemma~\ref{baselabel} we see that
\[a_1=\lbl(f,(A(\cvec{i-1}),A(\cvec{i-1})+e_i))\quad\mbox{ and }\quad b_1=\lbl(f,(A\cvec{i},A\cvec{i}+e_i)).\]
By Lemma~\ref{pathsum} we see that
\begin{flalign*}
&a_1N'+c \\
&=\lbl(f,(A(\cvec{i-1}),A(\cvec{i-1})+e_i))m(n,-A\cvec{1})+\lbl(f,(A(\cvec{i-1})+e_i,A\cvec{i}+e_i)) \\
&=\lbl(f,(A(\cvec{i-1}),A\cvec{i}+e_i)) \\
&=\lbl(f,(A(\cvec{i-1}),A\cvec{i}))m(n,-e_k)+\lbl(f,(A\cvec{i},A\cvec{i}+e_i))=an[k]+b_1.
\end{flalign*}
From this one can solve $c=(a-a_1N'/n[k])n[k]+b_1=a_0n[k]+b_1$, and we are done.
\end{proof}

\begin{theorem}\label{tessdiag}
The maps $\tess_{n}$ and $\diag_{n}$ are inverse maps of each other. For any $z\in\Z^d$ the system $(\digs_{N'}^\Z,\Mul_{\alpha(z,n),N'})$ is conjugate to $(X_{n^A},\sigma_{A,z})$ via $\tess_{n^A}$ and $(\digs_N^\Z,\Mul_{\alpha(z,n),N})$ is conjugate to $(X_n,\sigma_{z})$ via $\tess_{n}$.
\end{theorem}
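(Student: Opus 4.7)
The plan is to first show that $\tess_n$ and $\diag_n$ are mutually inverse homeomorphisms, and then combine this bijectivity with the equivariance relations already proved in Theorem~\ref{tessFactor} and Lemma~\ref{diagFactor} to obtain the two conjugacies.

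For the inverse relation $\diag_n \circ \tess_n = \mathrm{id}$ on $\digs_N^\Z$, I would first observe that $\alpha(\cvec{i}, n) = \prod_{j=1}^d n[j]^i = N^i$, so the automaton $\Mul_{N^i, N}$ acts on $\num{\digs_N}$ as the $i$th power of the shift (it multiplies by $N^i$ in base $N$, which just shifts digits). Hence $\Mul_{\alpha(\cvec{i}, n), N}(x)[0] = x[i]$, so $\tess_n(x)[\cvec{i}] = \cube_n(x[i])$, and applying $\val_n$ recovers $x[i]$. For the other direction $\tess_n \circ \diag_n = \mathrm{id}$ I would invoke Corollary~\ref{tessExist}: every bi-infinite sequence of cube values determines a unique tessellation in $X_n$ sharing that main diagonal. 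Given $f \in X_n$ with $y = \diag_n(f)$, the tessellation $\tess_n(y)$ has $\val_n(\tess_n(y)[\cvec{i}]) = y[i] = \val_n(f[\cvec{i}])$ by the previous step, so uniqueness forces $\tess_n(y) = f$.

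Both $\tess_n$ and $\diag_n$ are continuous: each output coordinate of $\tess_n(x)$ depends on only finitely many coordinates of $x$ (via the local rule of a multiplication automaton composed with $\cube_n$), while $\diag_n(f)[i]$ only depends on $f[\cvec{i}]$. Since both spaces are compact metric, $\tess_n$ is a homeomorphism. Theorem~\ref{tessFactor} already states $\sigma_z \circ \tess_n = \tess_n \circ \Mul_{\alpha(z,n), N}$, which together with bijectivity yields that $\tess_n$ is a conjugacy between $(\digs_N^\Z, \Mul_{\alpha(z,n),N})$ and $(X_n, \sigma_z)$, giving the second claim.

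For the first conjugacy claim I would simply apply the inverse-maps result to the prebasis $n^A$: $\tess_{n^A}$ is a homeomorphism $\digs_{N'}^\Z \to X_{n^A}$ with inverse $\diag_{n^A}$. By Lemma~\ref{diagFactor} we have $\Mul_{\alpha(z, n), N'} \circ \diag_{n^A} = \diag_{n^A} \circ \sigma_{A, z}$; composing on both sides with $\tess_{n^A}$ converts this to the equivariance
\[
\tess_{n^A} \circ \Mul_{\alpha(z, n), N'} = \sigma_{A, z} \circ \tess_{n^A},
\]
establishing the first conjugacy. The only bookkeeping points are keeping the two prebases $n$ and $n^A$ distinct, and noting that $N = \prod_i n[i]$ divides $N' = \prod_i n^A[i] = m(n, -A\cvec{1})$ so that every factor of $N$ is a factor of $N'$ and $\Mul_{\alpha(z,n), N'}$ is defined; the substantial work has already been done in Corollary~\ref{tessExist}, Theorem~\ref{tessFactor}, and Lemma~\ref{diagFactor}, so I do not anticipate any real obstacle here.
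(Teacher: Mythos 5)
Your proposal is correct, and its overall architecture (establish that $\tess_n$ and $\diag_n$ are mutually inverse, then transport the equivariance identities of Theorem~\ref{tessFactor} and Lemma~\ref{diagFactor} through the bijection) matches the paper. The one genuinely different ingredient is your proof of $\tess_n\circ\diag_n=\mathrm{id}$: you deduce it from the uniqueness part of Corollary~\ref{tessExist}, noting that $\tess_n(\diag_n(f))$ and $f$ are both valid tessellations (Lemma~\ref{validTess} for the former) carrying the same cube values on the main diagonal, so they must coincide. The paper instead computes $\tess_n(\diag_n(f))[z]$ directly: it rewrites it as $\sigma_z(\tess_n(\diag_n(f)))[\cvec{0}]$, pushes the shift through $\tess_n$ by Theorem~\ref{tessFactor} and through $\diag_n$ by Lemma~\ref{diagFactor}, and evaluates at the origin, obtaining $f[z]$ without ever invoking Corollary~\ref{tessExist}. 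Your route is slightly shorter at this point but leans on the macrotile/microtile machinery behind Corollary~\ref{tessExist} (which is legitimately available, as it precedes this theorem and creates no circularity), whereas the paper's computation stays within the two equivariance statements it needs anyway for the conjugacy claims. Your explicit treatment of continuity, and your derivation of $\tess_{n^A}\circ\Mul_{\alpha(z,n),N'}=\sigma_{A,z}\circ\tess_{n^A}$ by pre- and post-composing Lemma~\ref{diagFactor} with $\tess_{n^A}$, are points the paper leaves implicit; the remaining bookkeeping (applying the inverse statement to the prebasis $n^A$, and that $\Mul_{\alpha(z,n),N'}$ is defined because every factor of $N$ divides $N'$) is handled the same way in both arguments.
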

\begin{proof}
First let $x\in\digs_N^\Z$ and $i\in\Z$ be arbitrary. It holds that 
\[\tess_{n}(x)[\cvec{i}]=\cube_{n}(\Mul_{N^i,N}(x)[0])=\cube_{n}(\sigma^i(x)[0])=\cube_{n}(x[i])\]
and therefore $\diag_{n}(\tess_{n}(x))[i]=\val_{n}(\cube_{n}(x[i]))=x[i]$.

Next let $f\in X_{n}$ and $z\in\Z^d$ be arbitrary and denote $f'=\sigma_z(f)$. It holds that
\begin{flalign*}
&\tess_{n}(\diag_{n}(f))[z]=\sigma_z(\tess_{n}(\diag_{n}(f)))[\cvec{0}] \\
\overset{T. \ref{tessFactor}}&{=}\tess_{n}(\Mul_{\alpha(z,n),N}(\diag_{n}(f)))[\cvec{0}] \overset{L. \ref{diagFactor}}{=}\tess_{n}(\diag_{n}(f'))[\cvec{0}] \\
&=\cube_{n}(\diag_{n}(f')[0])=\cube_{n}(\val_{n}(f'[\cvec{0}]))=f'[\cvec{0}]=f[z].
\end{flalign*}

Since $\tess_{n}$ is the inverse of $\diag_n$, the last claim follows from Lemma~\ref{diagFactor}.
\end{proof}

It is a simple observation that the maps we have defined for mapping between tessellations and configurations are such that the represented real numbers are preserved.
\begin{proposition}\label{confreal}
It holds that $\real_N\circ\diag_n=\real_{\cvec{0},\cvec{1}}$ and $\real_{\cvec{0},\cvec{1}}\circ\tess_n=\real_N$.
\end{proposition}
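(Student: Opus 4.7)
The plan is to observe that this proposition is essentially a tautology once the definitions are unwound, so I would give a short direct calculation rather than invoking heavy machinery. The key fact already established in the preceding discussion is the closed-form expansion
\[\real(f) = \sum_{i=-\infty}^{\infty} N^i \val_n(f[-\cvec{i}]),\]
which appears in the paragraph introducing $\real(f)$ and is derived from Lemma~\ref{diaglabelvalue} combined with Proposition~\ref{freepv}. (I read the subscript $\cvec{1}$ in the proposition as $-\cvec{1}$, so that $\real_{\cvec{0},-\cvec{1}} = \real$; the definition of $\real_{p,v}$ requires $v \leq \cvec{0}$.)

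First I would prove the equality $\real_N \circ \diag_n = \real$. Fix $f \in X_n$. By the definition of $\diag_n$, we have $\diag_n(f)[i] = \val_n(f[\cvec{i}])$ for every $i \in \Z$. Plugging this into the definition of $\real_N$ gives
\[\real_N(\diag_n(f)) = \sum_{i=-\infty}^{\infty} \diag_n(f)[-i]\, N^i = \sum_{i=-\infty}^{\infty} \val_n(f[-\cvec{i}])\, N^i = \real(f),\]
where the last equality is the closed-form expansion recalled above. This equality is understood in $[0,\infty]$, so no convergence issues arise.

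Next I would deduce the second equality $\real \circ \tess_n = \real_N$ as a formal consequence. By Theorem~\ref{tessdiag}, the maps $\tess_n$ and $\diag_n$ are mutually inverse, so for any $x \in \digs_N^\Z$,
\[\real(\tess_n(x)) = \real_N(\diag_n(\tess_n(x))) = \real_N(x),\]
where the first equality is the case of the first part applied to $f = \tess_n(x)$.

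I do not expect any serious obstacle: once the expansion formula for $\real$ is in hand, both equalities are literal rewritings. The only point requiring a little care is making sure the subscript convention in $\real_{\cvec{0},-\cvec{1}}$ agrees with the unadorned $\real$, which is explicitly the case by the paragraph introducing the notation $\real$. Everything else is unfolding of definitions.
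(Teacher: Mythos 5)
Your proof is correct and follows essentially the same route as the paper: the first identity is an unwinding of the definitions of $\diag_n$, $\real_N$ and the expansion $\real(f)=\sum_i N^i\val_n(f[-\cvec{i}])$, and the second follows formally from the first since $\tess_n$ and $\diag_n$ are mutually inverse by Theorem~\ref{tessdiag}. Your reading of the subscript $\cvec{1}$ as $-\cvec{1}$ (so that $\real_{\cvec{0},-\cvec{1}}=\real$) is the intended one.
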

\begin{proof}
The first equality follows directly from definitions. The second equality follows from the first one because $\tess_{n}$ is the inverse of $\diag_{n}$ by Theorem~\ref{tessdiag}.
\end{proof}

\begin{theorem}\label{mulConjThm}
The system $(\digs_{N'}^\Z,\Mul_{\alpha(z,n),N'})$ is conjugate to $(\digs_N^\Z,\Mul_{\alpha(z,n),N})$ via $\phi=\diag_{n}\circ \micro_{A,n}\circ \tess_{n^A}$. Additionally, $\real_{N}\circ\;\phi=\real_{N'}$.
\end{theorem}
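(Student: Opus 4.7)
The plan is to assemble $\phi$ as a composition of three conjugacies that have already been proved or are immediate from prior results, and then to track the real number represented at each intermediate stage.

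First I would recall, from Theorem~\ref{tessdiag}, that $\tess_{n^A}$ is a conjugacy $(\digs_{N'}^\Z, \Mul_{\alpha(z,n),N'}) \to (X_{n^A}, \sigma_{A,z})$. Next I would note that $\micro_{A,n}$ is a conjugacy from $(X_{n^A}, \sigma_{A,z})$ to $(X_n, \sigma_z)$: indeed, by Theorem~\ref{macromicroInv} the map $\micro_{A,n}$ is a bijection with inverse $\macro_{A,n}$, and by the very definition $\sigma_{A,z} = \macro_A \circ \sigma_z \circ \micro_A$, which rearranges to $\micro_A \circ \sigma_{A,z} = \sigma_z \circ \micro_A$. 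Finally, Theorem~\ref{tessdiag} also gives that $\diag_n = \tess_n^{-1}$ is a conjugacy from $(X_n, \sigma_z)$ to $(\digs_N^\Z, \Mul_{\alpha(z,n),N})$. Composing the three conjugacies yields that $\phi = \diag_n \circ \micro_{A,n} \circ \tess_{n^A}$ is a conjugacy between the two multiplication automaton systems.

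For the second statement, I would chain the real-preservation results established earlier. For any $x \in \digs_{N'}^\Z$, Proposition~\ref{confreal} gives $\real_{\cvec{0},\cvec{1}}(\tess_{n^A}(x)) = \real_{N'}(x)$; then Proposition~\ref{invtessreal} applied to $f = \tess_{n^A}(x) \in X_{n^A}$ gives $\real_{\cvec{0},\cvec{1}}(\micro_{A,n}(f)) = \real_{\cvec{0},\cvec{1}}(f)$; and finally Proposition~\ref{confreal} again (in the form $\real_N \circ \diag_n = \real_{\cvec{0},\cvec{1}}$) gives $\real_N(\phi(x)) = \real_{\cvec{0},\cvec{1}}(\micro_{A,n}(\tess_{n^A}(x)))$. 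Stringing these three equalities together yields $\real_N \circ \phi = \real_{N'}$.

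There is no genuine obstacle here since every ingredient has already been established; the only thing to be careful about is bookkeeping, in particular verifying that the equivariance for $\micro_A$ (with respect to $\sigma_{A,z}$ on the source and $\sigma_z$ on the target) follows cleanly from the definition of $\sigma_{A,z}$ together with $\micro_A \circ \macro_A = \mathrm{id}_{X_n}$, and checking that the hypothesis on $A$ (every row containing a positive integer) is in force so that $\micro_{A,n}$ and Proposition~\ref{invtessreal} are available. Given how short the argument is, I would write it in one paragraph without subdividing it further.
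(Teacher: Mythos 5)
Your proposal is correct and follows essentially the same route as the paper: it composes the two conjugacies from Theorem~\ref{tessdiag} with the conjugacy $\micro_{A,n}$ (bijective by Theorem~\ref{macromicroInv}, equivariant by the definition of $\sigma_{A,z}$), and deduces the real-number preservation by chaining Propositions~\ref{confreal} and~\ref{invtessreal}. The only difference is that you spell out the equivariance of $\micro_A$, which the paper compresses into ``by definition.''
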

\begin{proof}
The system $(\digs_{N'}^\Z,\Mul_{\alpha(z,n),N'})$ is conjugate to $(X_{n^A},\sigma_{A,z})$ via $\tess_{n^A}$ by Theorem~\ref{tessdiag}. The system $(X_{n^A},\sigma_{A,z})$ is conjugate to $(X_n,\sigma_z)$ via $\micro_A$ by definition. The system $(X_n,\sigma_z)$ is conjugate to $(\digs_N^\Z,\Mul_{\alpha(z,n),N})$ via $\diag_{n}$ by Theorem~\ref{tessdiag}. The last claim follows by Propositions~\ref{invtessreal} and~\ref{confreal}.
\end{proof}

Let us specify one particular type of conjugacy. For any $N>1$, let $N=\prod_{i=1}^d p_i^{k_i}$ be its prime decomposition (with $k_i\in\Zpos$ and with the $p_i$ in rising order) and let $n=(p_1^{k_i},\dots,p_d^{k_d})$. Then, let $m=(p_1,\dots,p_d)$ and $M=\prod_{i=1}^d p_i$. If $A_{N}$ is the diagonal matrix $\diag(k_1,\dots,k_d)$, it is easy to see that $n=m^{A_N}$. Then we define $\conj_{N,M}=\diag_{m}\circ \micro_{A_{N},m}\circ \tess_{n}$.

If $N'>1$ is divisible by the same prime numbers as $N$, we extend the definition to $\conj_{N,N'}=\conj_{N',M}^{-1}\circ\conj_{N,M}$. This agrees with the previous definition in the case $N'=M$, because $\conj_{M,M}$ is the identity map.

\begin{corollary}\label{mulConj}
Assume that $N_1,N_2>1$ are divisible by the same prime numbers. Then $(\digs_{N_1}^\Z,\Mul_{\alpha,N_1})$ is conjugate to $(\digs_{N_2}^\Z,\Mul_{\alpha,N_2})$ via $\conj_{N_1,N_2}$ for any $\alpha>0$ such that these maps are defined. Additionally, $\real_{N_2}\circ\conj_{N_1,N_2}=\real_{N_1}$.
\end{corollary}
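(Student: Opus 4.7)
The plan is to deduce this corollary as a direct consequence of Theorem~\ref{mulConjThm} combined with the characterization of when $\Mul_{\alpha,N}$ is defined (recalled in the introduction: precisely when $\alpha$ is a positive rational whose numerator and denominator are products of prime factors of $N$). First I would unpack the definition of $\conj_{N,M}$: if $N=\prod_{i=1}^d p_i^{k_i}$ with $p_1<\cdots<p_d$, and $m=(p_1,\dots,p_d)$, $n=(p_1^{k_1},\dots,p_d^{k_d})$, then $n=m^{A_N}$ for the diagonal matrix $A_N=\diag(k_1,\dots,k_d)$. Matching notation with Theorem~\ref{mulConjThm} by taking that theorem's prebasis to be $m$ and its matrix $A$ to be $A_N$ (so that the theorem's $n^A$ and $N'$ become our $n$ and $N$, while its $N$ becomes $M$), the map $\phi$ of the theorem coincides verbatim with $\conj_{N,M}$. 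Since $A_N$ is diagonal with positive entries, every row of $A_N$ contains a positive integer, so all hypotheses of Theorem~\ref{mulConjThm} are met.

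Next, the theorem immediately yields that $\conj_{N,M}$ conjugates $(\digs_N^\Z,\Mul_{\alpha(z,m),N})$ with $(\digs_M^\Z,\Mul_{\alpha(z,m),M})$ for every $z\in\Z^d$, and that $\real_M\circ\conj_{N,M}=\real_N$. I would then observe that as $z$ ranges over $\Z^d$, the numbers $\alpha(z,m)=\prod_i p_i^{z[i]}$ range exactly over the positive rationals whose numerators and denominators have only prime factors among $p_1,\dots,p_d$, which by the criterion above is exactly the set of $\alpha>0$ for which $\Mul_{\alpha,N}$ is defined. Because $N_1$ and $N_2$ are divisible by the same primes, the squarefree product $M$ and the prebasis $m$ are common to the constructions of $\conj_{N_1,M}$ and $\conj_{N_2,M}$, and the set of admissible $\alpha$ is the same for $\Mul_{\alpha,N_1}$ and $\Mul_{\alpha,N_2}$.

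Finally, for any such admissible $\alpha$, applying the preceding step separately with $N=N_1$ and $N=N_2$ produces conjugacies $\conj_{N_1,M}\colon(\digs_{N_1}^\Z,\Mul_{\alpha,N_1})\to(\digs_M^\Z,\Mul_{\alpha,M})$ and $\conj_{N_2,M}\colon(\digs_{N_2}^\Z,\Mul_{\alpha,N_2})\to(\digs_M^\Z,\Mul_{\alpha,M})$, each intertwining the respective $\real_{N_i}$ with $\real_M$. Their composition $\conj_{N_1,N_2}=\conj_{N_2,M}^{-1}\circ\conj_{N_1,M}$ is then a conjugacy between $\Mul_{\alpha,N_1}$ and $\Mul_{\alpha,N_2}$, and chaining the $\real$-intertwinings gives $\real_{N_2}\circ\conj_{N_1,N_2}=\real_M\circ\conj_{N_1,M}=\real_{N_1}$.

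The main obstacle is essentially notational bookkeeping: one must carefully match the prebases and matrix of Theorem~\ref{mulConjThm} against the definition of $\conj_{N,M}$ and confirm that $A_N$ meets the hypothesis ``every row contains a positive integer.'' Once the identification is made, the result follows purely by composition of conjugacies, and no further computation is required.
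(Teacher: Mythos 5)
Your proposal is correct and follows essentially the same route as the paper: apply Theorem~\ref{mulConjThm} twice with the common squarefree base $M$ (obtaining conjugacies $\conj_{N_1,M}$ and $\conj_{N_2,M}$ preserving the represented real numbers) and compose to get $\conj_{N_1,N_2}=\conj_{N_2,M}^{-1}\circ\conj_{N_1,M}$. Your extra bookkeeping — matching the prebasis $m$ and matrix $A_N$ to the theorem's hypotheses and noting that every admissible $\alpha$ equals $\alpha(z,m)$ for some $z\in\Z^d$ — only makes explicit what the paper's shorter proof leaves implicit.
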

\begin{proof}
Let $N=\prod_{i=1}^d p_i$, where $p_i$ is the rising sequence of prime numbers that divide $N_1$ and $N_2$. Then also $\Mul_{\alpha,N}$ is defined. By the previous theorem $(\digs_{N_1}^\Z,\Mul_{\alpha,N_1})$ is conjugate to $(\digs_N^\Z,\Mul_{\alpha,N})$ via $\conj_{N_1,N}$ and $(\digs_N^\Z,\Mul_{\alpha,N})$ is conjugate to $(\digs_{N_2}^\Z,\Mul_{\alpha,N_2})$ via $(\conj_{N_2,N})^{-1}$, so $\Mul_{\alpha,N_1}$ is conjugate to $\Mul_{\alpha,N_2}$ via $\conj_{N_1,N_2}$. The last claim also follows from the previous theorem.
\end{proof}

We highlight that, as it turns out, the conjugacy of the particular form $\conj_{N,N^k}$ works in a very transparent way: cells in a configuration $x\in\digs_N^\Z$ are grouped into blocks of $k$ cells, and the $k$ symbols of $\digs_N$ in each block are interpreted as a base-$N$ representation of a number in $\digs_{N^k}$. We define the block map $\block_{N,k}:\digs_N^k\to\digs_{N^k}$ by $\block_{N,k}(a_{k-1}\cdots a_1a_0)=\sum_{i=0}^{k-1}a_i N^i$ for $N>1$, $k\geq 1$ and $a_i\in\digs_N$.

\begin{theorem}\label{macrocell}
Let $N>1$, $k\geq 1$, $x\in\digs_N^\Z$ and $i\in\Z$. Then $\conj_{N,N^k}(x)[i]=\block_{N,k}(x[ik-(k-1),ik])$.
\end{theorem}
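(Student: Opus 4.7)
The plan is to unfold the definition of $\conj_{N,N^k}$, show it collapses to a single macrotile map applied to $\tess_n(x)$, and then read off the diagonal value of the resulting tessellation using the base-$N$ expansion of a label along $-\cvec{1}$.

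\emph{Step 1 (Simplify $\conj_{N,N^k}$).} Write $N=\prod_{i=1}^d p_i^{k_i}$ and set $m=(p_1,\dots,p_d)$, $n=(p_1^{k_1},\dots,p_d^{k_d})$, $A_N=\diag(k_1,\dots,k_d)$, so $n=m^{A_N}$. For $N^k$ the corresponding prebasis is $n'=(p_1^{kk_1},\dots,p_d^{kk_d})=m^{kA_N}$, and a direct computation with the definition of $n^A$ gives $n^{kI}=n'$ as well, where $kI$ is the $d\times d$ matrix. By definition,
\[\conj_{N,N^k}=\conj_{N^k,M}^{-1}\circ\conj_{N,M}=\diag_{n'}\circ\macro_{kA_N,m}\circ\tess_m\circ\diag_m\circ\micro_{A_N,m}\circ\tess_n.\]
The middle factor $\tess_m\circ\diag_m$ is the identity by Theorem~\ref{tessdiag}. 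By Lemma~\ref{macroComp} applied with $A=A_N$ and $B=kI$ (so $AB=kA_N$ and $m^A=n$) one has $\macro_{kA_N,m}=\macro_{kI,n}\circ\macro_{A_N,m}$, and by Theorem~\ref{macromicroInv} the map $\macro_{A_N,m}$ inverts $\micro_{A_N,m}$ (note $A_N$ has a positive entry in every row). These cancellations collapse the composition to
\[\conj_{N,N^k}=\diag_{n'}\circ\macro_{kI,n}\circ\tess_n.\]

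\emph{Step 2 (Evaluate at position $i$).} Fix $x\in\digs_N^\Z$ and let $f=\tess_n(x)$, so that $\val_n(f[\cvec{j}])=x[j]$ for every $j\in\Z$ by Theorem~\ref{tessdiag}. Since $kI\cdot\cvec{i}=\cvec{ki}$, the definition of the macrotile map gives
\[\macro_{kI,n}(f)[\cvec{i}]=\cube_{n'}\bigl(\lbl(f,(\cvec{ki-k},\cvec{ki}))\bigr),\]
so $\conj_{N,N^k}(x)[i]=\val_{n'}(\macro_{kI,n}(f)[\cvec{i}])=\lbl(f,(\cvec{ki-k},\cvec{ki}))$.

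\emph{Step 3 (Identify with $\block_{N,k}$).} Applying the second formula of Lemma~\ref{labelIsBaseexp} with $v=\cvec{ki}$ yields
\[\lbl(f,(\cvec{ki-k},\cvec{ki}))=\sum_{j=0}^{k-1}N^j\,\val_n(f[\cvec{ki-j}])=\sum_{j=0}^{k-1}N^j\,x[ki-j].\]
Writing the word $x[ik-(k-1),ik]=a_{k-1}a_{k-2}\cdots a_0$ with $a_j=x[ik-j]$, the definition of $\block_{N,k}$ gives $\block_{N,k}(x[ik-(k-1),ik])=\sum_{j=0}^{k-1}a_jN^j$, which coincides with the previous sum, completing the proof.

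The main obstacle is Step~1: one must carefully track the compositions defining $\conj_{N,M}$ and $\conj_{N^k,M}$ and recognize that the passage through the coarsest prebasis $m$ cancels, leaving a single macrotile operation on $n$ itself with the diagonal cube-scaling matrix $kI$. Once this identification is made, Steps~2 and~3 are essentially bookkeeping: the macrotile definition extracts a single label on the diagonal of $f$, and that label has a standard base-$N$ expansion in terms of the values of the cubes it traverses, which is exactly the block map.
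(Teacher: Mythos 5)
Your proof is correct and follows essentially the same route as the paper: collapsing $\conj_{N,N^k}$ to $\diag_{n^K}\circ\macro_{K,n}\circ\tess_n$ (your $kI$ is the paper's $K=\diag(k,\dots,k)$) via Lemma~\ref{macroComp}, Theorem~\ref{macromicroInv} and Theorem~\ref{tessdiag}, then reading off the diagonal label and expanding it in base $N$. The only cosmetic difference is that you cite Lemma~\ref{labelIsBaseexp} for the final expansion where the paper combines Lemma~\ref{pathsum} with the diagonal-value identity directly, which amounts to the same computation.
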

\begin{proof}
As in the definition of $\conj_{N,M}$, let $N=\prod_{i=1}^d p_i^{k_i}$ be the prime decomposition of $N$, and thus $N^k=\prod_{i=1}^d p_i^{kk_i}$. Define vectors $n=(p_1^{k_1},\dots,p_d^{k_d})$, $n'=(p^{kk_1},\dots,p^{kk_d})$ and $m=(p_1,\dots,p_d)$.
Define the diagonal matrix $K=\diag(k,\dots,k)$. We note that $A_{N^k}=A_N K$, so by Lemma~\ref{macroComp} $\macro_{A_{N^k},m}=\macro_{K,n}\circ\macro_{A_{N},m}$. We simplify the defining formula of $\conj_{N,N^k}$ as follows:
\begin{flalign*}
\conj_{N,N^k}=&\conj_{N^k,M}^{-1}\circ\conj_{N,M} \\
=&(\diag_{n'}\circ\macro_{A_{N^k},m}\circ\tess_{m})\circ(\diag_{m}\circ \micro_{A_{N},m}\circ \tess_{n}) \\
=&\diag_{n^K}\circ\macro_{A_{N^k},m}\circ \micro_{A_{N},m}\circ \tess_{n}=\diag_{n^K}\circ\macro_{K,n}\circ \tess_{n}.
\end{flalign*}
We apply this to compute
\begin{flalign*}
&\conj_{N,N^k}(x)[i]=\diag_{n^K}(\macro_{K}(\tess_{n}(x)))[i]=\val_{n^K}(\macro_K(\tess_{n}(x))[\cvec{i}]) \\
&=\lbl(\tess_{n}(x),(K(\cvec{i-1}),K\cvec{i}))=\lbl(\tess_{n}(x),(k(\cvec{i-1}),k\cvec{i})) \\
\overset{L.~\ref{pathsum}}&{=}\sum_{j=0}^{k-1}N^j \lbl(\tess_{n}(x),(k\cvec{i}-\cvec{j}-\cvec{1}),k\cvec{i}-\cvec{j})) \\
&=\sum_{j=0}^{k-1}N^j\val_n(\tess_n(x)[k\cvec{i}-\cvec{j}])=\sum_{j=0}^{k-1}N^j x[ik-j]=\block_{N,k}(x[ik-(k-1),ik]).
\end{flalign*}
\end{proof}

We conclude by presenting some factor maps between multiplication automata. Now $A$ may be a general $d\times d'$ natural number matrix.

\begin{theorem}\label{mulFactThm}
The system $(\digs_N^\Z,\Mul_{\alpha(z,n^A),N})$ has $(\digs_{N'}^\Z,\Mul_{\alpha(z,n^A),N'})$ as a factor via $\phi=\diag_{n^A}\circ \macro_{A,n}\circ \tess_{n}$. Additionally, if $N'>1$ and $x\in\digs_N^\Z$ is such that $\real_N(x)$ is finite, then $\real_{N'}(\phi(x))=\real_N(x)$.
\end{theorem}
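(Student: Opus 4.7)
The plan is to establish the theorem by decomposing $\phi = \diag_{n^A} \circ \macro_{A,n} \circ \tess_n$ into its three component maps and chaining results already proved in the paper. The key preliminary computation is the identity
\[\alpha(z, n^A) = \prod_{i=1}^{d'} n^A[i]^{z[i]} = \prod_{i=1}^{d'} m(n,-Ae_i)^{z[i]} = m(n,-Az) = \alpha(Az,n),\]
which lets me convert a macro-level exponent $z \in \Z^{d'}$ into the micro-level exponent $Az \in \Z^d$. I would also note that each $n^A[i]$ is a product of factors of $N$, so $\Mul_{\alpha(z,n^A),N}$ is defined.

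For surjectivity, I would observe that $\tess_n$ is a bijection (Theorem~\ref{tessdiag}, since $\diag_n$ is its inverse), that $\macro_{A,n}$ is surjective by Theorem~\ref{macroSurj}, and that $\diag_{n^A}$ is a bijection (again by Theorem~\ref{tessdiag}). Continuity of each factor is immediate: each of the three maps is determined by a local rule on the respective coordinate spaces. Hence $\phi$ is a continuous surjection.

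For the equivariance condition, I would chain three conjugacy/factor relations in order:
\[
\begin{aligned}
\phi \circ \Mul_{\alpha(z,n^A),N}
&= \diag_{n^A} \circ \macro_{A,n} \circ \tess_n \circ \Mul_{\alpha(Az,n),N} \\
&\overset{T.~\ref{tessdiag}}{=} \diag_{n^A} \circ \macro_{A,n} \circ \sigma_{Az} \circ \tess_n \\
&\overset{C.~\ref{macroFact}}{=} \diag_{n^A} \circ \sigma_z \circ \macro_{A,n} \circ \tess_n \\
&\overset{T.~\ref{tessdiag}}{=} \Mul_{\alpha(z,n^A),N'} \circ \diag_{n^A} \circ \macro_{A,n} \circ \tess_n
= \Mul_{\alpha(z,n^A),N'} \circ \phi.
\end{aligned}
\]
The first step uses the identity $\alpha(z,n^A) = \alpha(Az,n)$, the second applies the conjugacy $\tess_n: (\digs_N^\Z, \Mul_{\alpha(Az,n),N}) \to (X_n, \sigma_{Az})$, the third uses Corollary~\ref{macroFact} to move $\macro_{A,n}$ past the shift (recall that $\macro_A \circ \sigma_{Az} = \sigma_z \circ \macro_A$), and the fourth uses that $\diag_{n^A}$ conjugates $\sigma_z$ back to $\Mul_{\alpha(z,n^A),N'}$.

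For the additional claim about real-number preservation, I would apply Proposition~\ref{confreal} twice and Proposition~\ref{surjtessreal} once. Specifically, if $\real_N(x)$ is finite, then $\real(\tess_n(x)) = \real_N(x)$ is finite by Proposition~\ref{confreal}; next, because $N' > 1$, Proposition~\ref{surjtessreal} gives $\real(\macro_{A,n}(\tess_n(x))) = \real(\tess_n(x)) = \real_N(x)$; finally, another application of Proposition~\ref{confreal} yields $\real_{N'}(\phi(x)) = \real(\macro_{A,n}(\tess_n(x))) = \real_N(x)$. There is no genuine obstacle here; the only subtlety worth flagging explicitly is the hypothesis $N' > 1$, needed to invoke Proposition~\ref{surjtessreal}.
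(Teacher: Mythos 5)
Your proposal is correct and follows essentially the same route as the paper: the identity $\alpha(z,n^A)=\alpha(Az,n)$, then chaining the factor/conjugacy relations of Theorem~\ref{tessdiag}, Corollary~\ref{macroFact}, and Theorem~\ref{tessdiag} again, with the real-number claim handled by Propositions~\ref{confreal} and~\ref{surjtessreal}. The only difference is that you unpack the composition of factor maps into explicit surjectivity, continuity, and equivariance checks, which the paper leaves implicit.
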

\begin{proof}
We note that 
\[\alpha(z,n^A)=m(n^A,-z)=m(n,-Az)=\alpha(Az,n),\]
so $\Mul_{\alpha(z,n^A),N}=\Mul_{\alpha(Az,n),N}$ and it is defined.

The system $(\digs_N^\Z,\Mul_{\alpha(Az,n),N})$ has $(X_n,\sigma_{Az})$ as a factor via $\tess_{n}$ by Theorem~\ref{tessdiag}. The system $(X_n,\sigma_{Az})$ has $(X_{n^A},\sigma_{z})$ as a factor via $\macro_A$ by Corollary~\ref{macroFact}. The system $(X_{n^A},\sigma_{z})$ has $(\digs_{N'}^\Z,\Mul_{\alpha(z,n^A),N'})$ as a factor via $\diag_{n^A}$ by Theorem~\ref{tessdiag}. The last claim follows from Propositions~\ref{surjtessreal} and~\ref{confreal}.
\end{proof}

Let us specify one particular type of factor map. Consider any $M>1$ with a prime decomposition of the form $M=\prod_{i=1}^d p_i$ (with the $p_i$ distinct and in rising order) and let $m=(p_1,\dots,p_d)$. If $M'$ divides $M$, there is a subsequence $(k_i)_{i=1}^{d'}$ of $(1,2,\dots,d)$ such that $M=\prod_{i=1}^{d'} p_{k_i}$. Let $m'=(p_{k_1},\dots,p_{k_{d'}})$. If $A_{M,M'}$ is the $d\times d'$ matrix having $e_{k_i}\in\Z^d$ as the $i$th column for $1\leq i\leq d'$, it is easy to see that $m'=m^{A_{M,M'}}$. Then we define $\fact_{M,M'}=\diag_{m'}\circ \macro_{A_{M,M'},m}\circ \tess_{m}$.

If $N,N'>1$ are divisible by the same prime numbers as $M,M'$ respectively, we extend the definition to $\fact_{N,N'}=\conj_{M',N'}\circ\fact_{M,M'}\circ\conj_{N,M}$. This agrees with the previous definition in the case when $N=M$ and $N'=M'$, because $\conj_{M,M}$ and $\conj_{M',M'}$ are identity maps.

\begin{corollary}\label{mulFact}
Let $N_1,N_2>1$ be such that every prime factor of $N_2$ is a prime factor of $N_1$. Then $(\digs_{N_1}^\Z,\Mul_{\alpha,N_1})$ has $(\digs_{N_2}^\Z,\Mul_{\alpha,N_2})$ as a factor via $\fact_{N_1,N_2}$ for any $\alpha>0$ such that these maps are defined. Additionally, if $x\in\digs_{N_1}^\Z$ is such that $\real_{N_1}(x)$ is finite, then $\real_{N_2}(\fact_{N_1,N_2}(x))=\real_{N_1}(x)$.
\end{corollary}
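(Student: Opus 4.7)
The plan is to unfold the definition $\fact_{N_1,N_2}=\conj_{M_2,N_2}\circ\fact_{M_1,M_2}\circ\conj_{N_1,M_1}$, where $M_j$ denotes the squarefree radical of $N_j$, handle the middle squarefree factor directly via Theorem~\ref{mulFactThm}, and sandwich it with the two conjugacies provided by Corollary~\ref{mulConj}.

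First I would settle the squarefree case. Writing $M_1=p_1\cdots p_d$ and $M_2=p_{k_1}\cdots p_{k_{d'}}$ for some subsequence $(k_i)$ of $(1,\ldots,d)$ (available because every prime factor of $N_2$ is a prime factor of $N_1$), set $m=(p_1,\ldots,p_d)$, $A=A_{M_1,M_2}$, and $m'=m^A=(p_{k_1},\ldots,p_{k_{d'}})$, so that $\fact_{M_1,M_2}=\diag_{m'}\circ\macro_{A,m}\circ\tess_m$. The defining condition for $\Mul_{\alpha,M_2}$ forces $\alpha$ to be a monomial $\prod_{i=1}^{d'}p_{k_i}^{a_i}$ with $a_i\in\Z$, which is exactly $\alpha(z,m')$ for $z=(a_1,\ldots,a_{d'})\in\Z^{d'}$. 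Theorem~\ref{mulFactThm} applied with this $m$, $A$, and $z$ immediately yields that $\fact_{M_1,M_2}$ is a factor map from $(\digs_{M_1}^\Z,\Mul_{\alpha,M_1})$ onto $(\digs_{M_2}^\Z,\Mul_{\alpha,M_2})$ preserving finite real values.

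Next I would post- and pre-compose with the conjugacies. Since $N_j$ and $M_j$ share prime factors for each $j$, Corollary~\ref{mulConj} provides that $\conj_{N_1,M_1}$ conjugates $\Mul_{\alpha,N_1}$ with $\Mul_{\alpha,M_1}$ and $\conj_{M_2,N_2}$ conjugates $\Mul_{\alpha,M_2}$ with $\Mul_{\alpha,N_2}$, both while preserving real values. A composition of a factor map with conjugacies on both sides is a factor map, and real-value preservation composes, so $\fact_{N_1,N_2}$ is the sought factor map and the claimed identity $\real_{N_2}\circ\fact_{N_1,N_2}=\real_{N_1}$ holds on configurations with finite real value.

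The only subtlety is bookkeeping: I must verify that $\Mul_{\alpha,M_j}$ is defined whenever $\Mul_{\alpha,N_j}$ is (immediate since they share primes) and that $\alpha$ is realized as $\alpha(z,m')$ for some $z\in\Z^{d'}$ (which uses only that the primes of the numerator and denominator of $\alpha$ lie in $\{p_{k_1},\ldots,p_{k_{d'}}\}$, forced by $\Mul_{\alpha,N_2}$ being defined). Beyond this the argument is a one-line composition of previously established results, with no substantial obstacle to overcome.
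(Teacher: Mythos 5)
Your proposal is correct and follows essentially the same route as the paper: decompose $\fact_{N_1,N_2}=\conj_{M_2,N_2}\circ\fact_{M_1,M_2}\circ\conj_{N_1,M_1}$ with $M_j$ the radical of $N_j$, apply Theorem~\ref{mulFactThm} to the squarefree middle map and Corollary~\ref{mulConj} to the two conjugacies, and compose, with the real-value identity obtained from the same two results. Your explicit check that $\alpha$ is realized as $\alpha(z,m')$ for some $z\in\Z^{d'}$ is a detail the paper leaves implicit, but it does not change the argument.
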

\begin{proof}
Let $M_1=\prod_{i=1}^{d} p_i$ where $p_i$ are the prime numbers that divide $N_1$ (in rising order), and let $M_2=\prod_{i=1}^{d} q_i$ where $q_i$ are the prime numbers that divide $N_2$. Then $M_2$ divides $M_1$.

The system $(\digs_{N_1}^\Z,\Mul_{\alpha,N_1})$ has $(\digs_{M_1}^\Z,\Mul_{\alpha,M_1})$ as a factor via $\conj_{N_1,M_1}$ by Corollary~\ref{mulConj}. The system $(\digs_{M_1}^\Z,\Mul_{\alpha,M_1})$ has $(\digs_{M_2}^\Z,\Mul_{\alpha,M_2})$ as a factor via $\fact_{M_1,M_2}$ by the previous theorem. The system $(\digs_{M_2}^\Z,\Mul_{\alpha,M_2})$ has $(\digs_{N_2}^\Z,\Mul_{\alpha,N_2})$ as a factor via $\conj_{M_2,N_2}$. Therefore $(\digs_{N_1}^\Z,\Mul_{\alpha,N_1})$ has $(\digs_{N_2},\Mul_{\alpha,N_2})$ as a factor via $\fact_{N_1,N_2}$. The last claim follows from Corollary~\ref{mulConj} and Theorem~\ref{mulFactThm}.
\end{proof}

We conclude this subsection by highlighting an alternative characterization for the maps $\conj_{N_1,N_2}$ and $\fact_{N_1,N_2}$ which may be used to ignore the details of their construction: they are fully determined by the requirement of continuity and the fact that they change representations of real numbers in base $N_1$ to representations of the same real numbers in base $N_2$.
\begin{theorem}
Let $N_1,N_2>1$ be such that every prime factor of $N_2$ is a prime factor of $N_1$. If $\phi:\digs_{N_1}^\Z\to\digs_{N_2}^\Z$ is a continuous map and for any $x\in\digs_{N_1}^\Z$ such that $\real_{N_1}(x)$ is finite it holds that $\real_{N_2}(\phi(x))=\real_{N_1}(x)$, then $\phi=\fact_{N_1,N_2}$. Additionally, if $N_1$ and $N_2$ have the same prime factors, then $\phi=\conj_{N_1,N_2}$, and $\phi$ is injective only in this case. If on the other hand $N_2$ has a prime factor that does not divide $N_1$, then a map $\phi$ satisfying the conditions does not exist.
\end{theorem}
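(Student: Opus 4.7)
First I would note that by Corollary~\ref{mulFact} the map $\fact_{N_1,N_2}$ already satisfies the hypotheses (it is continuous and preserves real values on configurations whose base-$N_1$ real is finite), so under the first hypothesis at least one such $\phi$ exists. For uniqueness I would work with the set $D=\{x\in\digs_{N_1}^\Z:\real_{N_1}(x)\text{ is finite and irrational}\}$, which is dense in $\digs_{N_1}^\Z$: given any cylinder one produces a configuration in it by copying the prescribed letters, placing $0$ at every sufficiently negative index, and filling positive indices with a non-eventually-periodic pattern so that the fractional part is irrational. For $x\in D$ the real $\xi=\real_{N_1}(x)$ is irrational and hence has a unique base-$N_2$ representation, so the equality $\real_{N_2}(\phi(x))=\xi$ pins down $\phi(x)=\config_{N_2}(\xi)$; the identical reasoning applies to $\fact_{N_1,N_2}(x)$, whence $\phi=\fact_{N_1,N_2}$ on $D$, and continuity extends the equality to all of $\digs_{N_1}^\Z$.

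For the same-primes case I would unfold the definition of $\fact_{N_1,N_2}$: when $N_1$ and $N_2$ share the same prime factors the radicals $M_1$ and $M_2$ coincide, the matrix $A_{M_1,M_2}$ is the identity, $\macro_{A_{M_1,M_2}}$ is the identity map, $\fact_{M_1,M_2}$ is itself the identity, and the composition defining $\fact_{N_1,N_2}$ collapses to $\conj_{M_2,N_2}\circ\conj_{N_1,M_1}=\conj_{N_1,N_2}$, which is a conjugacy (hence injective) by Corollary~\ref{mulConj}.

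For the converse half of the injectivity assertion I would exhibit a concrete collision. If $p$ is a prime dividing $N_1$ but not $N_2$, set $\xi=1/p$. The fact $p\mid N_1$ makes $\xi$ terminate in base $N_1$, producing two distinct elements of $\num{\digs_{N_1}}$ with real value $\xi$ (the terminating one and the alternate representation ending in an infinite trail of the symbol $N_1-1$), while $p\nmid N_2$ means $\xi$ does not terminate in base $N_2$ and therefore has only one base-$N_2$ representation. Both source configurations are then mapped by $\phi$ to $\config_{N_2}(\xi)$, violating injectivity.

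For the nonexistence claim, let $p$ be a prime of $N_2$ not dividing $N_1$. I would take $z^n\in\digs_{N_1}^\Z$ with $z^n[-n]=1$ and $z^n[i]=0$ otherwise, so $\real_{N_1}(z^n)=N_1^n$ while $z^n$ converges coordinate-wise to the all-zero configuration. Continuity of any candidate $\phi$ would force $\phi(z^n)\to\phi(0^\Z)=0^\Z$, hence $\phi(z^n)[1]\to 0$. Since $\phi(z^n)$ represents the integer $N_1^n$ in base $N_2$, it is either the canonical representation (with $0$s at every index $\geq 1$) or the alternate representation whose tail consists of the symbol $N_2-1$; the condition $\phi(z^n)[1]=0$ for large $n$ rules out the latter, so $\phi(z^n)[0]=N_1^n\bmod N_2$ for all sufficiently large $n$. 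But $\gcd(N_1,p)=1$ gives $N_1^n\not\equiv 0\pmod p$ and hence $N_1^n\bmod N_2\in\{1,\dots,N_2-1\}$ for every $n$, contradicting $\phi(z^n)[0]\to 0$. The subtlest step in the plan is this last argument, where the two possible base-$N_2$ representations of the integer $N_1^n$ must be separated before the ones digit can be read off.
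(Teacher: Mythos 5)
Your proposal is correct and follows essentially the same route as the paper: uniqueness via continuity on the dense set of configurations with irrational real value, non-injectivity via the two base-$N_1$ representations of $1/p$, and non-existence via configurations representing $N_1^n$ converging to $0^\Z$. The only (harmless) deviation is that in the same-prime-factors case you identify $\fact_{N_1,N_2}$ with $\conj_{N_1,N_2}$ by unfolding the definitions, whereas the paper simply notes that $\conj_{N_1,N_2}$ also satisfies the hypotheses (Corollary~\ref{mulConj}) and applies the uniqueness already proved.
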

\begin{proof}
The claims that $\phi=\fact_{N_1,N_2}$ or $\phi=\conj_{N_1,N_2}$ follow from Corollaries~\ref{mulConj} and~\ref{mulFact} if we show that the map $\phi$ satisfying the conditions in the statement of the theorem is unique, i.e. if $\phi'$ is another such map then $\phi=\phi'$. Since $\phi$ and $\phi'$ are continuous, it is sufficient to show that $\phi(x)=\phi'(x)$ in a dense subset of $\digs_{N_1}^\Z$, so let $x\in\digs_{N_1}^\Z$ be such that $\real_{N_1}(x)\in\R\setminus\Q$. Then
\begin{flalign*}
&\phi(x)=\config_{N_2}(\real_{N_2}(\phi(x)))=\config_{N_2}(\real_{N_1}(x)) \\
&=\config_{N_2}(\real_{N_2}(\phi'(x)))=\phi'(x).
\end{flalign*}

Next we show that if $N_1$ has some prime factor $p$ not dividing $N_2$, then $\phi$ is not injective. The number $p^{-1}$ has a finite base-$N_1$ representation $0.(N_1/p)$, so $p^{-1}$ has two different representations in base $N_1$. On the other hand, $p^{-1}$ does not have a finite base-$N_2$ representation, because otherwise $N_2^ip^{-1}$ would be an integer for a sufficiently large $i\in\N$ even though $p$ does not divide $N_2$. Therefore $p^{-1}$ has only one representation in base $N_2$. Let $x,y\in\digs_{N_1}^\Z$ be distinct configurations such that $\real_{N_1}(x)=\real_{N_1}(y)=p^{-1}$. Then it also holds that $\real_{N_2}(\phi(x))=\real_{N_2}(\phi(y))=p^{-1}$. Because $p^{-1}$ has only one representation in base $N_2$, from this it follows that $\phi(x)=\phi(y)$ and $\phi$ is not injective.

To show the last claim, assume to the contrary that $N_2$ has a prime factor not dividing $N_1$ and that a map $\phi$ satisfying the conditions exists. For $i\in\N$ let $x_i=\config_{N_1}(N_1^i)$, meaning that $\lim_{i\to\infty}x_i=0^\Z$. For all $i\in\N$ it holds that
\[\real_{N_2}(\phi(x_i))=\real_{N_1}(x_1)=N_1^i\not\equiv0\pmod{N_2},\]
and therefore $\phi(x_i)[0]\neq0$ or $\phi(x_i)[1]\neq0$. But then it follows that
\[\phi\left(\lim_{i\to\infty}x_i\right)=\phi(0^\Z)=0^\Z\neq\lim_{i\to\infty}(\phi(x_i)),\]
contradicting the assumption of continuity of $\phi$.
\end{proof}

\subsection{The regularity status of multiplication automata}\label{regSubSect}

In this subsection we consider the question of which multiplication automata are regular. Regularity is presented in~\cite{Kur97} as one way to classify dynamical systems according to the complexity of their time evolution: non-regular systems are in some sense more complex than regular systems. We will use the notions of sofic subshifts and subshifts of finite type (SFT) (for definitions, see e.g.~\cite{LM95}), but for our purposes it is for the most part sufficient to know that sofic subshifts are precisely the subshift factors of SFTs. From this it also follows that all subshift factors of sofic subshifts are sofic. The following definition was given in~\cite{Kur97}.

\begin{definition}
A dynamical system $(X,T)$ with $X$ a zero-dimensional compact metrizable space is called regular if all its one-sided subshift factors are sofic shifts.
\end{definition}

In Section~3 of~\cite{Kur97} it is also observed that to check the regularity of $(X,F)$ for a CA $F$ it is sufficient to check whether all of its trace subshifts are sofic shifts, i.e. whether the trace subshifts of all widths are sofic. Lower width trace subshifts are factors of higher width trace subshifts, so it is sufficient to check whether trace subshifts of arbitrarily large width are sofic.

We will give in Theorems~\ref{fracNonreg} and~\ref{intReg} a complete characterization of regular multiplication automata based on earlier results~\cite{BM97,Kop21trace}.

\begin{theorem}[Corollary~4.19 of~\cite{Kop21trace}]
The system $(\digs_{pq}^\Z,\Mul_{p/q,pq})$ is not regular for coprime $p,q>2$.
\end{theorem}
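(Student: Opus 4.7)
The plan follows the criterion recalled just before the statement: the system $(\digs_{pq}^\Z,\Mul_{p/q,pq})$ fails to be regular as soon as some width-$k$ trace subshift $\trsh_{[-(k-1),0]}(\Mul_{p/q,pq})$ is not sofic, and lower widths are factors of higher widths, so it suffices to exhibit non-soficness for some sufficiently large $k$.

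My first step is to translate the problem into the tessellation picture of Section~\ref{WangSect}. Setting $n=(q,p)$, one has $\alpha(e_2-e_1,n)=q^{-1}p=p/q$, so Theorem~\ref{tessdiag} gives a conjugacy between $(\digs_{pq}^\Z,\Mul_{p/q,pq})$ and $(X_n,\sigma_{e_2-e_1})$. Since $\diag_n(\tess_n(F^t(x)))[i]=F^t(x)[i]=\val_n(f[\cvec{i}+t(e_2-e_1)])$ for $f=\tess_n(x)$, the width-$k$ trace at the origin becomes the sequence of $k$-tuples $(\val_n(f[\cvec{i}+t(e_2-e_1)]))_{i=-(k-1)}^{0}$ indexed by $t\in\N$, that is, a diagonal strip of the tessellation by $T_{(q,p)}$ swept in the northwestern direction.

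The second step is a Myhill-Nerode/follower-set argument. For a fixed $k$ large enough in terms of $p$ and $q$, I would build an infinite family of trace prefixes $u_m$ ($m\in\N$) that all extend within $\trsh_{[-(k-1),0]}(\Mul_{p/q,pq})$ but whose follower sets are pairwise distinct. The $u_m$ would arise from tessellations obtained via Corollary~\ref{tessExist} by prescribing cubes along the main diagonal so that a common central pattern is surrounded by tails whose matching constraints, propagated through the $T_{(q,p)}$ adjacency rules, force a specific carry pattern to appear in the observed strip only after roughly $m$ time steps. Because $\gcd(p,q)=1$, Bezout combinations let one engineer tail configurations whose contribution to the diagonal strip is invisible for $m$ steps and then irrevocably distinguishes the two futures. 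Exhibiting infinitely many pairwise-distinct follower sets then yields non-soficness of the trace and hence non-regularity of the system.

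The main technical obstacle is the verification that these follower sets genuinely differ rather than merely the underlying configurations or tessellations. This is exactly where both hypotheses $p,q>2$ and $\gcd(p,q)=1$ become essential: if $q=2$ (or $p=2$), a single bit of carry state suffices to absorb the distinguishing information, collapsing the would-be infinite family of follower sets into finitely many classes; and if $p$ and $q$ shared a prime, the corresponding digit positions could be factored out to reduce the problem to a smaller base where the same argument might fail. With $p,q\geq3$ coprime, the residue classes modulo $p$ and modulo $q$ mix richly enough that one can pick configurations realizing truly distinct equivalence classes modulo arbitrarily large powers of $p$ or $q$; turning this residue-theoretic obstruction into an explicit construction of pairwise-distinguishing legal futures $w_{m,m'}$ is the technically delicate heart of the proof.
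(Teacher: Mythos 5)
There is a genuine gap here, and it is worth being precise about its nature. In the paper this statement is not proved at all: it is imported verbatim as Corollary~4.19 of~\cite{Kop21trace}, so a blind proof attempt would have to reprove that external result in full. Your first step (the translation to the tessellation picture) is fine as far as it goes — with $n=(q,p)$ and $z=e_2-e_1$ one indeed has $\alpha(z,n)=p/q$, and Theorem~\ref{tessdiag} identifies $(\digs_{pq}^\Z,\Mul_{p/q,pq})$ with $(X_n,\sigma_{e_2-e_1})$, so the width-$k$ trace is a diagonal strip of a $T_{(q,p)}$-tessellation read off along $e_2-e_1$. But this is only a change of coordinates; it contains none of the content of the theorem.

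The actual content would be your second step, and there you never leave the level of intention: the prefixes $u_m$ are not constructed, the ``tail configurations whose contribution is invisible for $m$ steps'' are not exhibited, and no argument is given that the resulting follower sets are pairwise distinct rather than merely the underlying configurations being distinct — which you yourself identify as the delicate point. The appeals to Bezout combinations and to ``residue classes mixing richly enough'' are heuristics, not verifiable claims; in particular nothing in the sketch actually uses the hypothesis $p,q>2$ beyond an informal remark about a single carry bit, and that remark is not backed by any computation (note that for, say, $p=3$, $q=2$ the automaton $\Mul_{3/2,6}$ is exactly the Kari example cited in the introduction, so the boundary cases are genuinely subtle and cannot be waved away). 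As it stands the proposal is a plausible proof plan whose central construction and its verification — the entire substance of Corollary~4.19 of~\cite{Kop21trace} — are missing, so it cannot be accepted as a proof of the statement.
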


\begin{theorem}\label{fracNonreg}
The system $(\digs_N^\Z,\Mul_{p/q,N})$ is not regular for any coprime $p,q>2$ such that this map is defined. 
\end{theorem}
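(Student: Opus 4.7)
The plan is short: invoke Corollary~\ref{mulFact} to exhibit $(\digs_{pq}^\Z,\Mul_{p/q,pq})$ as a factor of $(\digs_N^\Z,\Mul_{p/q,N})$, and then observe that regularity is inherited by topological factors, so the non-regularity of the former (by the cited theorem) forces non-regularity of the latter.

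First I would check that the hypotheses of Corollary~\ref{mulFact} are met. Since $p$ and $q$ are coprime, $\Mul_{p/q,pq}$ is defined. Since $\Mul_{p/q,N}$ is assumed to be defined, every prime factor of $p$ and of $q$ must be a prime factor of $N$, which means every prime factor of $pq$ divides $N$. With $N_1=N$, $N_2=pq$, and $\alpha=p/q$, Corollary~\ref{mulFact} then provides a factor map $\fact_{N,pq}:(\digs_N^\Z,\Mul_{p/q,N})\to(\digs_{pq}^\Z,\Mul_{p/q,pq})$.

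Next I would record the general principle that regularity passes to factors. If $\pi:(X,T)\to(Y,S)$ is a factor map between dynamical systems with zero-dimensional compact metrizable phase spaces, and if $\psi:(Y,S)\to(Z,\sigma)$ is a factor map onto a one-sided subshift, then $\psi\circ\pi:(X,T)\to(Z,\sigma)$ is also a factor map, so every one-sided subshift factor of $(Y,S)$ arises as a one-sided subshift factor of $(X,T)$. Consequently, if $(X,T)$ is regular in the sense of \cite{Kur97}, then so is $(Y,S)$.

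Applying this to $X=\digs_N^\Z$, $Y=\digs_{pq}^\Z$, $T=\Mul_{p/q,N}$, $S=\Mul_{p/q,pq}$, $\pi=\fact_{N,pq}$, the previous theorem (Corollary~4.19 of \cite{Kop21trace}) guarantees a non-sofic one-sided subshift factor $(Z,\sigma)$ of $(\digs_{pq}^\Z,\Mul_{p/q,pq})$, which is then also a non-sofic one-sided subshift factor of $(\digs_N^\Z,\Mul_{p/q,N})$. Hence $(\digs_N^\Z,\Mul_{p/q,N})$ is not regular. There is no real obstacle in this argument; the substantive content has already been packed into Corollary~\ref{mulFact} and the cited non-regularity result for the base case $N=pq$.
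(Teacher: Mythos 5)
Your proposal is correct and follows essentially the same route as the paper: apply Corollary~\ref{mulFact} (after noting that definedness of $\Mul_{p/q,N}$ forces every prime factor of $pq$ to divide $N$) to get $(\digs_{pq}^\Z,\Mul_{p/q,pq})$ as a factor, then transport the non-sofic one-sided subshift factor guaranteed by Corollary~4.19 of~\cite{Kop21trace} through the composition of factor maps. Your explicit remark that regularity passes to factors is exactly the (implicit) step in the paper's proof, so there is nothing missing.
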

\begin{proof}
Since $\Mul_{p/q,N}$ is defined, all the prime factors of $p$ and $q$ divide $N$, so by Corollary~\ref{mulFact} the system $(\digs_N^\Z,\Mul_{p/q,N})$ has $(\digs_{pq}^Z,\Mul_{p/q,pq})$ as a factor. By the previous theorem $(\digs_{pq}^\Z,\Mul_{p/q,pq})$ has a non-sofic subshift $(X,\sigma)$ as a factor. Therefore $(\digs_N^\Z,\Mul_{p/q,N})$ has the non-sofic subshift $(X,\sigma)$ as a factor and $(\digs_N^\Z,\Mul_{p/q,N})$ is not regular. 
\end{proof}

\begin{theorem}[Section~4 of~\cite{BM97}]
The width $1$ trace subshift of the CA $\Mul_{p,pq}$ for $p,q\geq 1$ is an SFT.
\end{theorem}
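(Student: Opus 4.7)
The plan is to identify the width~$1$ trace subshift of $F:=\Mul_{p,pq}$ with the one-sided subshift
\[
Y := \{(y_t)_{t\in\N}\in\digs_{pq}^\N \mid y_t \bmod q = \lfloor y_{t+1}/p\rfloor \text{ for all } t\geq 0\},
\]
which is manifestly an SFT specified by a single two-letter forbidden-pattern rule. The containment $\trsh_{[0,0]}(F)\subseteq Y$ follows directly from the local rule $\mul_{p,pq}$. Writing $y_t=F^t(x)[0]=a_1q+a_0$ and $F^t(x)[1]=b_1q+b_0$ with $a_0,b_0\in\digs_q$ and $a_1,b_1\in\digs_p$, the defining formula gives $y_{t+1}=\mul_{p,pq}(y_t,F^t(x)[1])=a_0p+b_1$, hence $\lfloor y_{t+1}/p\rfloor=a_0=y_t\bmod q$.

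For the reverse containment, given $y\in Y$ I would construct $x\in\digs_{pq}^\Z$ with $F^t(x)[0]=y_t$ for every $t\geq 0$. The idea is to inductively build a sequence $y^{(k)}\in Y$ for $k\in\N$ with $y^{(0)}=y$ satisfying the compatibility
\[
\lfloor y^{(k)}_s/q\rfloor \;=\; y^{(k-1)}_{s+1} \bmod p \qquad (k\geq 1,\; s\geq 0),
\]
and then put $x[k]:=y^{(k)}_0$ for $k\geq 0$, extending $x$ arbitrarily for $k<0$. A short induction on $t$ then shows $F^t(x)[k]=y^{(k)}_t$ for all $k,t\geq 0$: the step uses
\begin{align*}
\mul_{p,pq}(y^{(k)}_{t-1},y^{(k+1)}_{t-1}) &= (y^{(k)}_{t-1}\bmod q)\,p + \lfloor y^{(k+1)}_{t-1}/q\rfloor \\
&= \lfloor y^{(k)}_t/p\rfloor\,p + (y^{(k)}_t\bmod p) = y^{(k)}_t,
\end{align*}
combining the matching condition inside $y^{(k)}$ with the compatibility across levels. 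Specialising $k=0$ gives the trace equality $F^t(x)[0]=y_t$.

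The main technical step is producing $y^{(k)}$ from $y^{(k-1)}$. Parameterising $y^{(k)}_s=(y^{(k-1)}_{s+1}\bmod p)\,q+r_s$ with $r_s\in\digs_q$ and imposing $y^{(k)}\in Y$ reduces the problem to finding $(r_s)_{s\geq 0}\in\digs_q^\N$ satisfying the backward recurrence
\[
r_s \;=\; \lfloor ((y^{(k-1)}_{s+2}\bmod p)\,q + r_{s+1})/p\rfloor.
\]
This uniquely determines $r_s$ from $r_{s+1}$, with the result automatically lying in $\digs_q$ because $(p-1)q+(q-1)<pq$, so for every $S$ the set of admissible prefixes of length $S+1$ is non-empty: choose $r_S$ freely and propagate backwards. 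The subtle point, and the main obstacle, is that an admissible prefix need not admit a one-step forward extension, so simple iteration fails to produce an infinite sequence; the tree of admissible prefixes is, however, finitely branching (at most $q$ children per node) and has nodes of arbitrary depth, so König's lemma supplies an infinite branch, which is the desired $(r_s)_{s\geq 0}$. This yields $y^{(k)}\in Y$ and completes the identification $\trsh_{[0,0]}(F)=Y$, showing it is an SFT.
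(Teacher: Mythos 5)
Your proof is correct. Note first that the paper does not actually prove this statement: it is imported verbatim as ``Section~4 of~\cite{BM97}'' and used as a black box in the proof of Theorem~\ref{intReg}, so there is no internal argument to compare against, and your proposal supplies a genuinely self-contained alternative. You identify $\trsh_{[0,0]}(\Mul_{p,pq})$ with the $1$-step SFT $Y$ cut out by the pair condition $y_t\bmod q=\lfloor y_{t+1}/p\rfloor$; the containment $\trsh_{[0,0]}(\Mul_{p,pq})\subseteq Y$ is indeed immediate from the local rule, and your realization of an arbitrary $y\in Y$ by constructing the whole right half of the space-time diagram column by column is sound: the induction identity $\mul_{p,pq}(y^{(k)}_{t-1},y^{(k+1)}_{t-1})=\lfloor y^{(k)}_t/p\rfloor p+(y^{(k)}_t\bmod p)=y^{(k)}_t$ is exactly the division-by-$p$ decomposition, the parameterization $y^{(k)}_s=(y^{(k-1)}_{s+1}\bmod p)q+r_s$ encodes the cross-column compatibility, and the bound $(p-1)q+(q-1)<pq$ keeps the backward recurrence inside $\digs_q$. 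You also correctly isolate the one real pitfall, namely that the recurrence for a new column only propagates backwards in time and an admissible prefix need not extend forward, and the K\"onig's-lemma (equivalently compactness) repair is valid, since prefixes of admissible tuples are admissible and backward propagation provides nodes of every depth. One small streamlining you might note: since $\tr_{F,[0,0]}$ is continuous on the compact space $\digs_{pq}^\Z$, the trace subshift is automatically closed, so it suffices to realize each finite prefix of $y$ by the finite backward propagation alone and pass to the limit, dispensing with K\"onig's lemma. Compared with citing~\cite{BM97}, your route buys an explicit list of the forbidden length-$2$ words of the trace SFT, which fits well with the way the rest of the paper manipulates these automata via Theorem~\ref{macrocell} and Corollary~\ref{mulConj}.
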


\begin{theorem}\label{intReg}
The systems $(\digs_N^\Z,\Mul_{p,N})$ and $(\digs_N^\Z,\Mul_{1/p,N})$ are regular for any $p,N\in\Zpos$ such that these maps are defined.
\end{theorem}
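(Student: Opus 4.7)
By the criterion recalled at the beginning of this subsection (from~\cite{Kur97}), a CA is regular exactly when every trace subshift is sofic, and it suffices to verify this for traces of arbitrarily large width. The plan is to fix $k\in\Zpos$ and reduce the width $k$ trace of each of $\Mul_{p,N}$ and $\Mul_{1/p,N}$ to a width $1$ trace in the higher base $N^k$, where the theorem of~\cite{BM97} cited just above applies.

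The first step is an appeal to Corollary~\ref{mulConj}: since $N$ and $N^k$ have the same prime factors, $\conj_{N,N^k}$ is a topological conjugacy from $(\digs_N^\Z,\Mul_{\alpha,N})$ to $(\digs_{N^k}^\Z,\Mul_{\alpha,N^k})$ for both $\alpha=p$ and $\alpha=1/p$. By Theorem~\ref{macrocell} this conjugacy acts cell-by-cell via the bijection $\block_{N,k}$, and position $0$ in the image codes the window $[-(k-1),0]$ of the source. Consequently, the pointwise extension of $\block_{N,k}$ is a topological conjugacy between the width $k$ trace subshift $\trsh_{[-(k-1),0]}(\Mul_{\alpha,N})$ and the width $1$ trace subshift $\trsh_0(\Mul_{\alpha,N^k})$, so soficness of one is equivalent to soficness of the other.

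The problem therefore reduces to showing that $\trsh_0(\Mul_{p,N^k})$ and $\trsh_0(\Mul_{1/p,N^k})$ are sofic for every $k$. For $\Mul_{p,N^k}$ this is immediate from~\cite{BM97} applied with the factorization $N^k=p\cdot(N^k/p)$, which even yields an SFT. For $\Mul_{1/p,N^k}$ the same argument from~\cite{BM97} applies by symmetry: its local rule, of the mirrored shape $(-1,0)$, implements long division by $p$ in base $N^k$ with bounded borrows exactly mirroring the bounded carries of long multiplication, and the SFT description of the trace carries over. The main obstacle is to justify this symmetric use of~\cite{BM97} rigorously; an alternative route is to use $\Mul_{1/p,N^k}=\sigma^{-1}\circ\Mul_{N^k/p,N^k}$ together with reversibility of $\Mul_{p,N^k}$ to identify $\trsh_0(\Mul_{1/p,N^k})$ with the reversed left half of the bi-infinite orbit projection of $\Mul_{p,N^k}$ at position $0$, which inherits soficness from the SFT property of the one-sided trace.
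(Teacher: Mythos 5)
Your treatment of $\Mul_{p,N}$ is essentially the paper's argument: reduce to traces of arbitrarily large width, use Theorem~\ref{macrocell} and Corollary~\ref{mulConj} to identify the width-$k$ trace subshift of $\Mul_{p,N}$ (after applying $\block_{N,k}$ symbolwise) with the width-$1$ trace subshift of $\Mul_{p,N^k}$, and invoke the theorem of~\cite{BM97}. One point needs repair even here: ``immediate from~\cite{BM97} applied with the factorization $N^k=p\cdot(N^k/p)$'' presupposes that $p$ divides $N^k$, which can fail for small $k$ when $p\nmid N$ (e.g.\ $p=4$, $N=6$, $k=1$: $\Mul_{4,6}$ is defined but $4\nmid 6$). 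The paper fixes $M$ with $p\mid N^M$ and only considers widths $k\geq M$; since arbitrarily large widths suffice this restriction is harmless, but it has to be stated.

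The genuine gap is the case $\Mul_{1/p,N}$. The theorem quoted from~\cite{BM97} concerns only $\Mul_{p,pq}$, multiplication by a divisor of the base; your assertion that the SFT description ``carries over by symmetry'' to the division automaton $\Mul_{1/p,N^k}$ is exactly the statement that would need a proof, and you acknowledge this yourself. Your fallback is the right idea in spirit --- note $\Mul_{1/p,N^k}=\Mul_{p,N^k}^{-1}$ directly, so the detour through $\sigma^{-1}\circ\Mul_{N^k/p,N^k}$ is unnecessary; the one-sided traces of the inverse of a reversible CA arise by time-reversal from the two-sided trace, and soficness survives because regular languages are closed under reversal --- but as written (``the reversed left half of the bi-infinite orbit projection \dots inherits soficness'') it is a slogan rather than an argument: one still has to check that the two-sided trace has the same language as the one-sided trace and that the trace of the inverse is precisely the one-sided subshift determined by the reversed language. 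The paper sidesteps all of this with its opening sentence: since $\Mul_{p,N}$ and $\Mul_{1/p,N}$ are inverse maps, it suffices to prove regularity of $\Mul_{p,N}$, and only the multiplication case is ever analyzed. Either adopt that reduction at the outset or carry out the reversal argument in detail; as it stands, the $1/p$ half of your proof is not complete.
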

\begin{proof}
The maps $\Mul_{p,N}$ and $\Mul_{1/p,N}$ are inverses of each other, so it is sufficient to show that $(\digs_N^\Z,\Mul_{p,N})$ is regular. For $N=1$ the CA $\Mul_{p,N}$ is the identity map on the set with a single point, so we may assume that $N>1$. Regularity is equivalent to showing that for some $M\geq 1$ the trace subshift of arbitrary width $k\geq M$ is sofic. Let $M$ be such that $p$ divides $N^M$ and fix any $k\geq M$. Let $I=[-(k-1),0]$. We need to show that $(X,\sigma)$ with $X=\trsh_I(F)$ is sofic. Let $\block_{N,k}$ be the map defined before Theorem~\ref{macrocell} and for $z\in((\digs_N)^k)^\N$ denote by $\block_{N,k}(z)$ the coordinatewise application of $\block_{N,k}$ to the symbols of $z$. We need to show that the subshift $(\block_{N,k}(X),\sigma)$ conjugate to $(X,\sigma)$ is sofic. Its configurations $\block_{N,k}(\tr_{\Mul_{p,N},I}(x))$ for $x\in\digs_N^\Z$ are precisely of the form
\begin{flalign*}
&\block_{N,k}(\tr_{\Mul_{p,N},I}(x))[i]=\block_{N,k}(\Mul_{p,N}^i(x)[-(k-1),0]) \\
\overset{T.~\ref{macrocell}}&{=}\conj_{N,N^k}(\Mul_{p,N}^i(x))[0]\overset{C.~\ref{mulConj}}{=}\Mul_{p,N^k}^i(\conj_{N,N^k}(x))[0] \\
&=\Mul_{p,N^k}^i(x')[0]=\tr_{\Mul_{p,N^k}}(x')[i]\mbox{ for }i\in\N
\end{flalign*}
where $x'=\conj_{N,N^k}(x)\in(\digs_{N^k})^\Z$, so $\block_{N,k}(\tr_{\Mul_{p,N},I}(x))\in \trsh_{[0,0]}(\Mul_{p,N^k})$. Since $\conj_{N,N^k}$ is a bijection, in fact $\block_{N,k}(X)=\trsh_{[0,0]}(\Mul_{p,N^k})$. Therefore $(\block_{N,k}(X),\sigma)$ is the trace subshift of width $1$ of the CA $\Mul_{p,N^k}$, and it is sofic by the previous theorem.
\end{proof}

\section*{Acknowledgements}
I thank Tristan Stérin for helpful discussions on representing multiplication by Wang tiles. The work was supported by the Finnish Cultural Foundation (grant number 00220510).

\bibliographystyle{plainurl}
\bibliography{mybib}{}

\end{document}